\subjclass[2020]{11F55, 11F60, 11F70}
\keywords{hermitian modular forms, differential operators, pullback formula}
\thanks{This work was supported by JST SPRING, Grant Number JPMJSP2110. }
\author[N. Takeda]{Nobuki TAKEDA}
\address{Department of Mathematics, Graduate School of Science, Kyoto University, Kyoto 606-8502, Japan}
\email{takeda.nobuki.72z@st.kyoto-u.ac.jp}
\theoremstyle{definition}
\newtheorem{dfn}{Definition}[section]
\newtheorem{rem}[dfn]{Remark}
\theoremstyle{plain}
\newtheorem{prop}[dfn]{Proposition}
\newtheorem*{cond}{Condition (A)}
\newtheorem{lem}[dfn]{Lemma}
\newtheorem{thm}[dfn]{Theorem}
\newtheorem{cor}[dfn]{Corollary}
\newtheorem{fact}[dfn]{Fact}
\newtheorem*{MainTheorem}{\rm\bf Theorem~\ref{thm:pull}}
\newtheorem*{MaindiffTheorem}{\rm\bf Corollary~\ref{cor:diff}}
\newcommand{\bbc}{\mathbb{C}}
\newcommand{\bbr}{\mathbb{R}}
\newcommand{\bbq}{\mathbb{Q}}
\newcommand{\bbz}{\mathbb{Z}}
\newcommand{\rmu}{\mathrm{U}}
\newcommand{\bfa}{\mathbf{a}}
\newcommand{\bfh}{\mathbf{h}}
\newcommand{\frakn}{\mathfrak{n}}
\newcommand{\bfi}{\boldsymbol{i}}
\newcommand{\calk}{\mathcal{K}}
\newcommand{\adele}{\mathbb{A}}
\newcommand{\kp}{K^+}
\newcommand{\dkl}{\mathbb{D}_{\boldsymbol{k}, \boldsymbol{l}}}
\renewcommand{\Re}{\mathrm{Re}}
\renewcommand{\Im}{\mathrm{Im}}
\newcommand{\bfk}{\boldsymbol{k}}
\newcommand{\bfl}{\boldsymbol{l}}
\newcommand{\kv}{\boldsymbol{k}_v}
\newcommand{\lv}{\boldsymbol{l}_v}
\newcommand{\va}{v\in\mathbf{a}}
\DeclareMathOperator*{\bigboxtimes}{ \boxtimes}
\providecommand{\abs}[1]{\left\lvert#1\right\rvert}
\providecommand{\norm}[1]{\left\lvert#1\right\rvert}
\providecommand{\adj}[1]{{#1^*}}
\providecommand{\uni}[1]{\mathrm{U}_{#1}}
\providecommand{\hus}[1]{\mathfrak{H}_{#1}}
\providecommand{\ph}[1]{\mathcal{P}_{#1}}
\providecommand{\inte}[1]{\mathcal{O}_{#1}}
\providecommand{\herm}[1]{\mathbb{S}_{#1}}
\DeclareMathOperator{\tr}{Tr}
\numberwithin{equation}{section}
\begin{document}
\title [Pullback formula for vector-valued hermitian modular forms on $\rmu_{n,n}$]
{Pullback formula for vector-valued hermitian modular forms on $\rmu_{n,n}$}
\date{}
\begin{abstract}
	Using a differential operator $D$ which sends a scalar-valued Siegel modular form to the tensor product of two vector-valued Siegel modular forms,
	under a certain condition, we give the pullback formula for vector-valued hermitian modular forms on any CM field.
	We also give equivalence conditions for differential operators to have the above properties, which is an extension of Ibukiyama's result for hermitian modular forms.
\end{abstract}
\maketitle
\tableofcontents
\section{Introduction}
In the case of Siegel modular forms, the pullback of Siegel Eisenstein series to the direct product of two Siegel upper half-spaces has been studied by many people.
Garrett~\cite{Garrett1987Integral} proved the result in the scalar-valued case.
Using Garrett's method and differential operators that preserve the automorphic properties,
this theory was generalized for the symmetric tensor valued case (e.g. \cite{Bocherer1985Uber}, \cite{BoSaYa1992pullback}, \cite{Takei1992On}, \cite{Takayanagi1993Vector}, \cite{Kozima2000On}),
and for the alternating tensor valued case (e.g. \cite{Takayanagi1995On}, \cite{Kozima2002Standard}).
Kozima~\cite{Kozima2021Pullback} proved it in the general case by using the differential operators generalized by Ibukiyama~\cite{ibukiyama1999differential,Ibukiyama2020generic}.
In the case of hermitian modular forms, several results have been formulated (e.g. \cite{shimura2000arithmeticity}, \cite{LanUru2015Arithmeticity}, \cite{SkinnerUrban2014Iwasawa}).

This formula (called the ``pullback formula'') has been used for studying the Fourier
coefficients of vector-valued Klingen-Eisenstein series, the algebraicity of vector-valued Siegel modular forms and congruences of Siegel modular forms.

We first aim to describe differential operators on hermitian modular forms following Ibukiyama \cite{ibukiyama1999differential,Ibukiyama2020generic} using tools of representation theory such as Howe duality.
The case of  Siegel modular forms is reinterpreted in terms of representation theory in \cite{Takeda2025Kurokawa}, which is proved using the method of Ban \cite{ban2006rankin}.
A similar argument can be made for the case of hermitian modular forms as well.

Let $K$ be a quadratic imaginary extension of a totally real field $\kp$, and let $\bfa$ (resp. $\bfh$) denote the set of infinite (resp. finite) places of $\kp$.
We fix a CM type $\Sigma_K$ of $K$.
We put $m:=m_{\kp}:=\#\bfa=[\kp:\bbq]$.
Let $M_\rho(\Gamma_K^{(n)}(\frakn))$ be a complex vector space of all hermitian modular forms  of weight $\rho$ and level $\frakn$ on the product $\hus{n}^\bfa$ of hermitian upper half planes.
We denote the representation of $\mathrm{GL}_n(\bbc)$
that corresponds to a dominant integral weight $\bfk$ by $(\rho_{n, \bfk},V_{n,\bfk})$.
For a family $(\bfk,\bfl)=(\kv,\lv)_{\va}$ of pairs of dominant integral weights such that  $\ell(\kv)\leq n$ and $\ell(\lv)\leq n$ for any $v\in \bfa$ (here $\ell(\cdot)$ denotes the length of a dominant integral weight, defined in Section~\ref{sec:herm}),
we define the representation $\rho_{n,(\bfk,\bfl)}=\bigboxtimes_{\va}(\rho_{n, \kv}\boxtimes\rho_{n,\lv})$
of $\calk_{n,\infty}^\bbc:=\prod_{\va}(\mathrm{GL}_n(\bbc)\times \mathrm{GL}_n(\bbc))$.
Let $G_n$ be a unitary group defined over $\kp$.

Let $n_1,\ldots,n_d$ be positive integers such that $n_1\geq\cdots\geq n_d\geq 1$ and put $n = n_1+\cdots+n_d$.
Let $(\rho_s,V_s)$ be a representation of $\calk_{(n_s)}^\bbc$ for $s=1\ldots,d$,
and $\kappa=(\kappa_v)_\va$ a family of positive integers.
We consider $V := V_{1}\otimes\cdots\otimes V_{d}$-valued differential operators $\mathbb{D}$
on scalar-valued functions of $\hus{n}$, satisfying Condition (A) below:

\begin{cond}
	For any modular form $F\in M_\kappa(\Gamma_K^{(n)})$, we have
	\[\mathrm{Res}(\mathbb{D}(F))\in \bigotimes_{i=1}^d M_{det^\kappa\rho_{n_i}}(\Gamma_K^{(n_i)}),\]
	where $\mathrm{Res}$ means the restriction of a function on $\hus{n}^\bfa$ to
	$\hus{n_1}^\bfa \times\cdots\times \hus{n_d}^\bfa$
	and $\det^\kappa$ denotes the one-dimensional representation introduced in the Notation.
\end{cond}

This Condition (A) corresponds to Case (I) in \cite{ibukiyama1999differential},
and the differential operators constructed for several vector-valued cases in \cite{Browning2024Constructing}.

We put $\partial_Z=\left(\frac{\partial}{\partial Z_{v,i,j}}\right)_{\va}$.
Let $P_v(X)$ be a vector-valued polynomial on a space $M_n$ of degree $n$ variable matrices.
We will give an equivalent condition that the differential operator $\mathbb{D}=P(\partial_{Z})=(P_v(\partial_{Z_v}))_{\va}$ satisfies Condition (A).

\begin{MaindiffTheorem}
	Let $n_1,\ldots,n_d$ be positive integers such that $n_1\geq\cdots\geq n_d\geq 1$
	and put $n = n_1+\cdots+n_d$.
	We take a family $(\bfk_s, \bfl_s)=(\bfk_{v,s},\bfl_{v,s})_{\va}$ of pairs of dominant integral weights such that $\ell(\bfk_{v,s})\leq n_d$, $\ell(\bfl_{v,s})\leq n_d$
	and $\ell(\bfk_{v,s})+\ell(\bfl_{v,s})\leq\kappa_v$ for each $\va$ and $s=1,\ldots,d$.

	Let $P_v(T)$ be a $\left(V_{n_1,\bfk_{v,1},\bfl_{v,1}}\otimes\cdots\otimes V_{n_d,\bfk_{v,d},\bfl_{v,d}}\right)$-valued polynomial on a space of degree $n$ variable matrices $M_n$ for $\va$,
	and put $P(T)=(P_v(T))_{\va}$.
	The differential operator $\mathbb{D}=P(\partial_{Z})=(P_v(\partial_{Z_v}))_{\va}$ satisfies Condition (A) for $\det^\kappa$
	and $\det^\kappa\rho_{n_1,(\bfk_{1},\bfl_1)}\otimes\cdots\otimes\det^\kappa\rho_{n_d,(\bfk_d,\bfl_d)}$
	if and only if
	$P(T)$ satisfies the following conditions:
	\begin{enumerate}
		\item If we put $\widetilde{P}(X_1,\ldots,X_d,Y_1,\ldots,Y_d)=P\left(
			      \begin{pmatrix}
					      X_1\,^tY_1 & \cdots & X_1\,^tY_d \\
					      \vdots     & \ddots & \vdots     \\
					      X_d\,^tY_1 & \cdots & X_d\,^tY_d
				      \end{pmatrix}\right)$ with $X_i, Y_i \in (M_{n_i,\kappa_v})_{\va}$,
		      then $\widetilde{P}$ is pluriharmonic for each $(X_i, Y_i)$.
		\item For $(A_i,B_i) \in \calk_{(n_i)}^\bbc:=\prod_{\va}(\mathrm{GL}_{n_i}(\bbc)\times\mathrm{GL}_{n_i}(\bbc))$,
		      we have
		      \[P\left(
			      \begin{pmatrix} A_1 & & \\&\ddots&\\ & &  A_d\end{pmatrix}
			      T
			      \begin{pmatrix} ^t\!B_1 & & \\&\ddots&\\ & &  ^t\!B_d\end{pmatrix}
			      \right)
			      =\left(\rho_{n_1,(\bfk_1,\bfl_1)}(A_1,B_1)\otimes\cdots\otimes\rho_{n_d,(\bfk_d,\bfl_d)}(A_d,B_d)\right)P(T).\]
	\end{enumerate}
\end{MaindiffTheorem}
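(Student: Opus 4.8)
The plan is to follow the representation-theoretic reformulation of Ibukiyama's method developed for Siegel modular forms in \cite{Takeda2025Kurokawa} through the approach of Ban \cite{ban2006rankin}, and to transport it to the unitary group $G_n$ over the CM field $K$. The guiding principle is that both Condition (A) and the two algebraic conditions on $P$ can be tested against a reproducing kernel (of Poincar\'e/Bergman type) for modular forms of weight $\det^\kappa$: the reproducing property turns the analytic statement ``$\mathrm{Res}(\mathbb{D}(F))$ is modular for every $F$'' into an identity for $\mathbb{D}$ applied to the kernel, and the transformation law of the kernel then yields both the necessity and the sufficiency of (1) and (2). Throughout one works place by place over $\va$, and the decomposition of the Lie algebra of $G_n(K_v^+)$ as $\mathfrak{k}_v\oplus\mathfrak{p}_v^+\oplus\mathfrak{p}_v^-$ separates the problem into a compact part, which governs the weight, and a holomorphic/antiholomorphic part, which governs holomorphy after restriction.

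For condition (2) I would analyse the action of the Levi factor, i.e.\ of block-diagonal elements whose ``rotation'' component acts linearly on the restricted variable $(Z_1,\ldots,Z_d)$ and hence, on the polynomial symbol of $\mathbb{D}=P(\partial_Z)$, induces the substitution $T\mapsto\mathrm{diag}(A_1,\ldots,A_d)\,T\,\mathrm{diag}({}^t\!B_1,\ldots,{}^t\!B_d)$ with $(A_s,B_s)\in\mathrm{GL}_{n_s}(\bbc)\times\mathrm{GL}_{n_s}(\bbc)$. Tracing this through the chain rule, the requirement that $\mathrm{Res}(\mathbb{D}(F))$ transform with the prescribed automorphy factor $\det^\kappa\rho_{n_1,\bfk_1,\bfl_1}\otimes\cdots\otimes\rho_{n_d,\bfk_d,\bfl_d}$ becomes exactly the intertwining identity displayed in condition (2). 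Since the Levi acts linearly on $Z$, this step is essentially formal in both directions: covariance of $P$ is equivalent to $\mathbb{D}(F)$ carrying the correct $\mathfrak{k}$-type.

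The main obstacle is condition (1): to show that, on top of the Levi covariance, preservation of holomorphy and modularity on each factor is equivalent to pluriharmonicity of $\widetilde{P}$. Here I would pass to the substitution $T=(X_s\,{}^t\!Y_t)_{1\le s,t\le d}$ with $X_s,Y_s\in(M_{n_s,\kappa_v})_{\va}$, which factors the matrix variable through rank-$\kappa_v$ data and realizes the Fock model of the oscillator representation for the unitary dual pair $(\mathrm{U}(n,n),\mathrm{U}(\kappa_v))$ at each place. Under this substitution, the action of $\mathfrak{p}_v^{-}$ — equivalently the correction terms produced when one commutes $P(\partial_Z)$ past the automorphy factor $\det(CZ+D)^\kappa$ — should organize precisely into the block Laplacians $\sum_c\partial_{X_{s,a,c}}\partial_{Y_{s,b,c}}$, whose annihilation of $\widetilde{P}$ is the pluriharmonicity in each pair $(X_s,Y_s)$. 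Howe duality for this dual pair then identifies the pluriharmonic polynomials with the highest-weight vectors generating the irreducible holomorphic constituents, so that pluriharmonicity of $\widetilde{P}$ is equivalent to $\mathrm{Res}(\mathbb{D}(F))$ being holomorphic and modular on $\hus{n_s}^{\bfa}$ for each $s$. I expect the delicate point to be the combinatorial bookkeeping that matches these correction terms exactly to the block Laplacians, together with the verification that the range constraints $\ell(\bfk_{v,s})+\ell(\bfl_{v,s})\le\kappa_v$ keep us inside the stable range of the duality, where the harmonic constituents are nonzero and carry precisely the weights $(\bfk_{v,s},\bfl_{v,s})$.

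Finally I would assemble the two halves. Condition (2) fixes the $K$-type (the weight) of $\mathrm{Res}(\mathbb{D}(F))$, while condition (1) guarantees that $\mathrm{Res}(\mathbb{D}(F))$ is annihilated by the operators obstructing holomorphy, so that it lands in $\bigotimes_{s=1}^d M_{\det^\kappa\rho_{n_s}}(\Gamma_K^{(n_s)})$; together they give sufficiency. For necessity, applying Condition (A) to the reproducing kernel — which is built from, and reproduces, all modular forms of weight $\det^\kappa$ — forces both the covariance and the pluriharmonicity, which completes the equivalence.
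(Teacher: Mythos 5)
Your proposal follows essentially the same route as the paper: the paper likewise passes to the polynomial Fock model $L_{n,\kappa}=(\bbc[M_{n,\kappa},M_{n,\kappa}])^{\bfa}$ of the oscillator representation for the dual pair $(\rmu(n,n),\rmu(\kappa))$, in which the $\mathfrak{p}^{-}$-action becomes the block Laplacians $\sum_{s}\partial_{X_{i,s}}\partial_{Y_{j,s}}$ (your condition (1)) and the $K'$-action gives the covariance (your condition (2)), and then invokes Kashiwara--Vergne/Howe duality via Proposition~\ref{prop:howe}, Lemma~\ref{lem:pluriharmonic}, and Theorems~\ref{thm:maintype} and~\ref{thm:mainweight}. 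The only difference is one of packaging: you propose testing Condition (A) against a reproducing kernel to obtain necessity, whereas the paper extracts both directions from the isomorphism of intertwining spaces in Lemma~\ref{lem:pluriharmonic}.
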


Then, we give the pullback formula for general vector-valued hermitian modular forms.
We use Shimura's result \cite{shimura2000arithmeticity} for calculations at the finite places
and we use Kozima's method \cite{Kozima2021Pullback} for the infinite places.

Let $n_1, n_2$ be positive integers such that $n_1\geq n_2$,
$\kappa=(\kappa_v)_{\va }$ a family of positive integers and $\bfk=(\bfk_v)_{\va }$ and $\bfl=(\bfl_v)_{\va }$
a family of dominant integral weights such that $\ell(\bfk_v)\leq n_2$, $\ell(\bfl_v)\leq n_2$
and $\ell(\bfk_v)+\ell(\bfl_v)\leq\kappa_v$ for each infinite place $v$ of $\kp$.
Let $\frakn$ be an integral ideal of $\kp$.
We take a Hecke character $\chi$ of $K$ with infinite part $\chi_\infty$ given by
\[
	\chi_\infty(x_\infty)
	= \prod_{v \in \Sigma_K} |x_v|^{-\kappa_v} x_v^{\kappa_v},
\]
and with conductor dividing $\frakn$.
Let $E_{n,\kappa}(g,s;\frakn,\chi)$ on $G_{n, \adele}$ be the hermitian Eisenstein series of degree $n$, level $\frakn$, and weight $\kappa$,
and $\left[f\right]_{r}^n(g,s;\chi)$ the hermitian Eisenstein series of degree $n$ associated with an automorphic cusp form  $f$ of degree $r$.
Both series are defined explicitly in Section~\ref{sec:eisenstein}.

Let $\chi_{K}$ be the quadratic character associated to quadratic extension $K/\kp$.
For a Hecke eigenform $f$ on $G_{n,\adele}$ of level $\frakn$, and weight $(\rho,V)$
and a Hecke character $\eta$ of $K$, we set
\[D(s,f; \eta)
	=L(s-n+1/2,f\otimes \eta,\mathrm{St})
	\cdot\left(\prod_{i=0}^{2n-1} L_{\kp}(2s-i,\eta\cdot\chi_{K}^{i})\right)^{-1},
\]
where $L(*,f\otimes {\eta}, \mathrm{St})$ is the standard L-function attached
to $f\otimes {\eta}$ of degree $2n + 1$, and $L_{\kp}(*,\eta)$ (resp. $L_{\kp}(*,\eta\cdot\chi_{K})$) is the Hecke L-function
attached to $\eta$ (resp. $\eta\cdot\chi_{K}$).
For a finite set $S$ of finite places, we put
\[D_S(s,f;\eta)=\prod_{v\in\bfh\backslash S}D_v(s,f;\eta),\]
where $D_v(s,f;\eta)$ is a $v$-part of $D(s,f; \eta)$.

We put $g^\natural=\left(\begin{smallmatrix}0&I_r\\I_r&0\\ \end{smallmatrix}\right)
	g\left(\begin{smallmatrix}0&I_r\\I_r&0\\ \end{smallmatrix}\right)$ and define $f^\dagger_\chi$ by
\[
	f_\chi^\dagger(g)
	= \chi(\det g)\,\overline{f(g^\natural)},
\]
where $\chi$ is a Hecke character of $K$ whose archimedean component
$\chi_\infty$ satisfies the above condition.

The main theorem is stated as follows.
Here, $(\cdot,\cdot)$ denotes the Petersson inner product, explicitly defined
with the usual normalization in Section~\ref{sec:herm}.
The map $\iota$ denotes the embedding
$G_{n_1} \times G_{n_2} \hookrightarrow G_n$ introduced in Section~\ref{sec:decomp}.
\begin{MainTheorem}
	Let $S$ be the finite set of finite places dividing $\frakn$,
	and take $s \in \bbc$ such that $\Re(s)>n$.
	\begin{enumerate}
		\item If $n_1=n_2$, for any Hecke eigenform $f\in \mathcal{A}_{0,n_2}(\rho_{n_2},\frakn)$, we have
		      \begin{align*}
			      \left(f,(\dkl E^\theta_{n,\kappa})(\iota(g_1,*),\overline{s}; \chi)\right)
			      =  c(s,\rho_{n_2}) \cdot \prod_{v\mid\frakn}[\calk_{n,v}:\calk_{n,v}(\frakn)]
			      \cdot D_S(s,f;\overline{\chi}) \cdot \overline{f_\chi^\dagger(g_1)}.
		      \end{align*}
		\item If $\frakn=\inte{\kp}$, for any Hecke eigenform $f\in \mathcal{A}_{0,n_2}(\rho_{n_2})$, we have
		      \begin{align*}
			      \left(f,(\dkl E_{n,\kappa})(\iota(g_1,*),\overline{s};\frakn, \chi)\right)
			      =  c(s,\rho_{n_2}) \cdot D(s,f;\overline{\chi}) \cdot \overline{[f_\chi^\dagger]_{n_2}^{n_1}(g_1,s; \chi)}.
		      \end{align*}
	\end{enumerate}
	Here a $\bbc$-valued function $c(s,\rho_{n_2,v})$ is defined in Proposition~\ref{prop:arch}, which does not depend on $n_1$.
\end{MainTheorem}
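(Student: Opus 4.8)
The plan is to adapt Garrett's pullback (doubling) method to the unitary group $G_n$, separating the argument into a global orbit decomposition, a reduction of the resulting period to a product of local zeta integrals, and finally explicit local evaluations---non-archimedean via Shimura's formulas \cite{shimura2000arithmeticity} and archimedean via Kozima's technique \cite{Kozima2021Pullback}. First I would write the Siegel Eisenstein series $E_{n,\kappa}$ as a sum over $P(\kp)\backslash G_n(\kp)$, where $P$ is the Siegel parabolic with Levi $\mathrm{GL}_n$, and restrict it along the embedding $\iota\colon G_{n_1}\times G_{n_2}\hookrightarrow G_n$ with $n=n_1+n_2$. The essential combinatorial input is the decomposition of the double coset space $P\backslash G_n/\iota(G_{n_1}\times G_{n_2})$ into finitely many orbits together with the identification of the unique open (nondegenerate) orbit. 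Pairing with the cusp form $f$ over the $G_{n_2}$-variable (the slot $*$ in $\iota(g_1,*)$), cuspidality annihilates the contributions of the lower-rank orbits in case~(1), where $n_1=n_2$ is balanced; in case~(2) these orbits instead reassemble into the Klingen structure underlying $[f^\natural]_{n_2}^{n_1}$. The hypothesis $\Re(s)>n$ is used throughout to justify the unfolding and the interchange of summation, integration, and the action of $\dkl$.

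Next I would apply the differential operator $\dkl$ at the archimedean places and invoke Corollary~\ref{cor:diff} (specialized to $d=2$). Covariance condition~(2) guarantees that $\mathrm{Res}(\dkl E_{n,\kappa})$ takes values in the prescribed tensor representation $\det^\kappa\rho_{n_1,\bfk,\bfl}\otimes\det^\kappa\rho_{n_2,\bfk,\bfl}$ on $\hus{n_1}^\bfa\times\hus{n_2}^\bfa$, so that the Petersson pairing with the weight-$\rho_{n_2}$ form $f$ is well defined, while the pluriharmonicity condition~(1) is precisely what renders the associated archimedean integral tractable. Since $f$ is a Hecke eigenform and the Eisenstein section restricted to the open orbit factorizes as a pure tensor over all places of $\kp$, the global period $(f,(\dkl E_{n,\kappa})(\iota(g_1,*),\overline{s};\chi))$ unfolds into an Euler product of local zeta integrals against the local components of $f$.

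The local evaluations then proceed place by place. At a non-archimedean place $v\nmid\frakn$ I would use Shimura's explicit computation of the doubling integral, obtaining the local factor $D_v(s,f;\overline{\chi})$, namely the ratio of the local standard $L$-factor $L_v(s-n+\tfrac12,f\otimes\overline{\chi},\mathrm{St})$ to $\prod_{i=0}^{2n-1}L_{\kp,v}(2s-i,\overline{\chi}\cdot\chi_{K}^{i})$; at a ramified place $v\mid\frakn$ a volume and coset-index computation yields the factor $[K_{n,v}:K_{n,v}(\frakn)]$. At the archimedean places I would carry out Kozima's method to evaluate the vector-valued integral produced by $\dkl$, extracting the constant $c(s,\rho_{n_2})$ of Proposition~\ref{prop:arch}; the key point is that this archimedean constant depends only on $\rho_{n_2}$ through the harmonic structure of the operator and not on $n_1$. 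Assembling the factors gives, in case~(1), the product $c(s,\rho_{n_2})\cdot\prod_{v\mid\frakn}[K_{n,v}:K_{n,v}(\frakn)]\cdot D_S(s,f;\overline{\chi})\cdot f^\natural(g_1)$, and in case~(2), where $\frakn=\inte{\kp}$ forces every finite place to be unramified so that the partial product becomes the full $D(s,f;\overline{\chi})$, the value $c(s,\rho_{n_2})\cdot D(s,f;\overline{\chi})\cdot[f^\natural]_{n_2}^{n_1}(g_1,s;\overline{\chi})$.

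I expect the main obstacle to be the archimedean computation: controlling the precise way in which the vector-valued operator $\dkl$ interacts with the restriction of the nonholomorphic Eisenstein section on the open orbit, and showing that the resulting integral collapses to the single scalar $c(s,\rho_{n_2})$ independent of $n_1$. This is exactly where pluriharmonicity (Corollary~\ref{cor:diff}(1)) and Kozima's reduction are indispensable, and where the passage from the scalar doubling identity of Garrett \cite{Garrett1989Integral} to the genuinely vector-valued, CM-field setting requires the most care.
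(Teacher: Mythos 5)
Your outline follows the paper's proof essentially step for step: the double coset decomposition $P_{n,0}\backslash G_n/\iota(G_{n_1}\times G_{n_2})$ with representatives $\xi_r$ ($0\leq r\leq n_2$), the vanishing of the terms with $r<n_2$ against the cusp form, the factorization of the surviving $r=n_2$ term into an Euler product, Shimura's Satake-homomorphism computation at $v\nmid\frakn$, the index computation $[K_{n,v}:K_{n,v}(\frakn)]$ at $v\mid\frakn$, and Kozima's method at the archimedean places yielding $c(s,\rho_{n_2})$. So the approach is the same, and the overall plan is sound.

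There is, however, one concrete misstatement in your second-to-last sentence of the first paragraph: you claim that in case~(2) the lower-rank orbits ``reassemble into the Klingen structure underlying $[f^\natural]_{n_2}^{n_1}$.'' This is not how the Klingen--Eisenstein series arises, and an argument built on that premise would not close. In the paper the orbits with $r<n_2$ are annihilated by cuspidality of $f$ in \emph{both} cases (the vanishing proposition makes no use of $n_1=n_2$); what produces $[f^\natural]_{n_2}^{n_1}$ in case~(2) is the residual outer sum over $\gamma_1\in P_{n_1,n_2}\backslash G_{n_1}$ attached to the single open orbit $r=n_2$, because each local integral there evaluates not just to the scalar $D_v(s,f;\overline{\chi})$ but to $D_v(s,f;\overline{\chi})\,\epsilon(f^\natural)_{n_2,v}^{n_1}(g_1,s;\overline{\chi})$, i.e.\ it returns the local Klingen section evaluated at $g_1$. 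Summing these sections over $\gamma_1$ is precisely the definition of $[f^\natural]_{n_2}^{n_1}(g_1,s;\overline{\chi})$. Your plan omits this section-valued output of the local integrals (you record only the $L$-factor), which is harmless when $n_1=n_2$ but is the entire mechanism in the unbalanced case; you should correct the attribution and track the $g_1$-dependence of each local factor through the unfolding.
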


This paper is organized as follows:
In Section 2, We explain hermitian modular forms and the terminology used in this paper.
In Section 3, we first give some formulas on derivatives.
Next, we give the equivalent condition for a differential operator on hermitian modular forms
to preserve the automorphic properties.
In Section 4, we define the hermitian Eisenstein Series and Klingen-Eisenstein series.
In Section 5, we first state the well-known fact of the double coset decomposition.
Then we show the calculation of the pullback formula can be reduced to a local computation
and prove the pullback formula.

\textbf{Acknowledgment.}
The author would like to express sincere gratitude to T. Ikeda for his guidance and support as my supervisor,
and to H. Katsurada for their valuable suggestions and comments.
The author also thanks the reviewers for their detailed and constructive comments.

\textbf{Notation.}
For a commutative ring $R$, we denote by $R^\times$ the unit group of $R$.
We denote by $M_{m,n}(R)$ the set of $m\times n$ matrices with entries in $R$.
In particular, we put $M_n(R)\coloneqq M_{n,n}(R)$.
Let $I_n$ be the identity element of $M_n(R)$.
Let $\det(X)$ be the determinant of $X$ and $\tr(X)$ the trace of $X$,
${}^tX$ the transpose of $X$ for a square matrix $x \in M_n(R)$.
Let $\mathrm{GL}_n(R) \subset M_n(R)$ be a general linear group of degree $n$.

Let $K$ be a quadratic extension field of $\kp$ with the non-trivial automorphism $\tau$ of $K$ over $\kp$.
We often put $\overline{k}=\tau(k)$ for $k\in K$.
We put $\overline{X}=(\overline{x_{ij}})$ and $\adj{X}= {}^t\!\overline{X}$ for $X=(x_{ij})\in M_{m,n}(K)$.

For matrices $A\in M_{m}(\bbc), B\in M_{m,n}(\bbc)$, we define $A[B]=\adj{B}AB$,
where $\adj{B}$ is the transpose of $\bar{B}$ and $\bar{B}$ is the complex conjugate of $B$.

Let $\herm{n}\subset M_n(\bbc)$ be the set of hermitian matrices. For an element $X \in \herm{n}$,
we denote by $X>0$ (resp. $X \geq 0$) that $X$ is a positive definite matrix (resp. a non-negative definite matrix).
For a subset $S \subset \herm{n}$, we denote by $S_{>0}$ (resp. $S_{\geq 0}$)
the subset of positive definite (resp. non-negative definite) matrices in $S$.
Let $\det^k$ be the 1 dimensional representation of multiplying $k$-square of determinant for $\mathrm{GL}_n(\bbc)$.

Let $K$ be an algebraic field, and $\mathfrak{p}$ be a prime ideal of K.
We denote by $K_\mathfrak{p}$ a $\mathfrak{p}$-adic completion of $K$
and by $\inte{K}$ the integer ring of $K$.

If a group $G$ acts on a set $V$, then we denote by $V^G$ the $G$-invariant
subspace of $V$.

For a representation $(\rho,V)$, we denote by $(\rho^*, V^*)$ the
contragredient representation of $(\rho, V)$
and by $(\overline{\rho},\overline{V})$ the conjugate representation of $(\rho, V)$.

\section{Hermitian Modular Forms}
Let $K$ be a quadratic imaginary extension of a totally real field $\kp$.
The set of finite places will be denoted by $\bfh$ and the archimedean places by $\bfa$.
We fix a CM type $\Sigma_K$ for $K$ (i.e. $\Sigma_K$ contains a choice of exactly one representative from each pair of complex conjugate embeddings of $K$).
We often canonically identify CM type $\Sigma_K$ with $\bfa$, and we denote by $\sigma_v$ the embedding $K \hookrightarrow \bbc$ corresponding to $v \in \bfa$.
We put $m:=m_{\kp}:=\#\bfa=[\kp:\bbq]$.
We put $K_v=\prod_{w|v} K_w$ and $\mathcal{O}_{K_v}=\prod_{w|v}\mathcal{O}_{K_w}$ for a place $v$ of $\kp$.
Let $\adele=\adele_{\kp}$ be the adele ring of $\kp$, and $\adele_{0}$,
$\adele_{\infty}$ the finite and infinite parts of $\adele$, respectively.

We put $J_n=\begin{pmatrix}O_n & I_n \\ -I_n & O_n \\ \end{pmatrix}$.
The unitary group $\uni{n}$ is an algebraic group defined over $\kp$,
whose $R$-points are given by
\[\uni{n}(R)=\{g\in \mathrm{GL}_{2n}(K\otimes_{\kp}R) \mid \adj{g}J_ng=J_n\}\]
for each $\kp$-algebra $R$.\\ We also define other unitary groups $\rmu(n,n)$
and $\rmu(n)$ by
\begin{align*}
	\rmu(n,n) & =\{g\in \mathrm{GL}_{2n}(\bbc) \mid \adj{g}J_ng=J_n\}, \\
	\rmu(n)   & =\{g\in \mathrm{GL}_{n}(\bbc) \mid \adj{g}g =I_n\},
\end{align*}
where $\adj{g}$ denotes the conjugate transpose of $g$.
Put $G_n=\rmu_n(\kp)$, $G_{n,v}=\rmu_n(\kp_v)$  for a place $v$ of $\kp$,
$G_{n,\adele}=\rmu_n(\adele)$, $G_{n,0}=\rmu_n(\adele_0)$,
and $G_{n,\infty}=\prod_{\va}G_{n,v}=\prod_{\va}\rmu(n,n)$.

We define $\calk_{n,v}$ by
\[\calk_{n,v}=\left\{
	\begin{array}{ll}
		\uni{n}(\inte{\kp_v}) & (v\in\bfh), \\
		\rmu(n)\times \rmu(n) & (\va).
	\end{array}
	\right.\]
Then $\calk_{n,v}$ is isomorphic to a maximal compact subgroup of $G_{n,v}$.
We fix a maximal compact subgroup of $G_{n,v}$,
which is also denoted by $\calk_{n,v}$ by abuse of notation.
We put $\calk_{n,0}=\prod_{v\in\bfh}\calk_{n,v}$ and $\calk_{n,\infty}=\prod_{\va }\calk_{n,v}$.

\subsection{As Analytic Functions on hermitian Symmetric Spaces}\label{sec:herm}
We have the identification
\begin{align*}
	M_n(\bbc) & \cong \herm{n}\otimes_\bbr\bbc  \\
	Z         & \mapsto \Re(Z)+\sqrt{-1}\Im(Z),
\end{align*}
with the hermitian real part $\Re(Z)$ and the imaginary part $\Im(Z)$, i.e.,
\begin{align*}
	\Re(Z) & =\frac{1}{2}(Z+\adj{Z}),          \\
	\Im(Z) & =\frac{1}{2\sqrt{-1}}(Z-\adj{Z}).
\end{align*}

Let $\hus{n}$ be the hermitian upper half space of degree $n$, that is
\[ \hus{n} = \left\{Z \in M_n(\bbc) \mid \Im(Z) >0\right\}.\]

Then $G_{n,\infty}=\prod_{\va}\rmu(n,n)$ acts on $\hus{n}^\bfa$ by
\[g\left<Z\right>=\left((A_vZ_v+B_v)(C_vZ_v+D_v)^{-1}\right)_{\va}\]
for  $g=\begin{pmatrix}A_v& B_v \\C_v & D_v \\\end{pmatrix}_{\va} \in G_{n,\infty}$
and $Z=(Z_v)_{\va}\in\hus{n}^\bfa$.
We put $\bfi_n:=(\sqrt{-1}I_n)_{\va}\in\hus{n}^\bfa$.

Let $(\rho, V )$ be an algebraic representation of
$\calk_{n,\infty}^\bbc:=\prod_{\va}(\mathrm{GL}_n(\bbc)\times\mathrm{GL}_n(\bbc))$ on a finite dimensional complex vector space $V$,
and take a hermitian inner product on $V$ such that
\[\left<\rho(g)v,w\right>=\left<v,\rho(\adj{g})w\right> \]
for any $g \in \calk_{n,\infty}^\bbc$.

For  $g=\begin{pmatrix}A_v& B_v \\C_v & D_v \\\end{pmatrix}_{\va} \in G_{n,\infty}$
and $Z=(Z_v)_{\va}\in\hus{n}^\bfa$,
we put
\[\lambda(g,Z)=(C_vZ_v+D_v)_{\va}, \quad \mu(g,Z)=(\overline{C_v}{}^t\!Z_v+\overline{D_v})_{\va},\]
and
\[M(g,Z)=(\lambda(g,Z),\mu(g,Z)).\]
We write
\[\lambda(g)=\lambda(g,\bfi_n),\quad \mu(g)=\mu(g,\bfi_n)\ \text{ and }\ M(g)=M(g,\bfi_n)\]
for short.
For a $V$-valued function $F$ on $\hus{n}^\bfa$, we put
\[ F|_{\rho}[g](Z)=\rho(M(g,Z))^{-1}F(g\left< Z\right>) \quad (g\in G_{n,\infty},\ Z \in \hus{n}^\bfa).\]

We put
\[\Gamma_K^{(n)}(\frakn)=\left\{\left.g=(g_v)_{\va}\in G_{n,\infty}\cap \mathrm{GL}_{2n}(\inte{K})\right| g_v\equiv I_{2n} \mod{\frakn \mathcal{O}_K}\right\}\]
for an integral ideal $\frakn$ of $\kp$.
When $\frakn=\inte{\kp}$, we put $\Gamma_K^{(n)}=\Gamma_K^{(n)}(\inte{\kp})$.

\begin{dfn}
	We say that $F$ is a (holomorphic) hermitian modular form of level $\frakn$, and weight $(\rho,V)$
	if $F$ is a holomorphic $V$-valued function on $\hus{n}$ and $F|_{\rho}[g]=F$ for all $g \in \Gamma_K^{(n)}(\frakn)$.
	(If $n=1$ and $\kp=\bbq$, another holomorphy condition at the cusps is also needed.)

	We denote by $M_\rho(\Gamma_K^{(n)}(\frakn))$ a complex vector space of all hermitian modular forms
	of level $\frakn$, and weight $(\rho,V)$.
\end{dfn}

We set
\[\Lambda_n(\frakn)=\left\{T\in\herm{n}\cap  M_n(K)\left|\sum_{\sigma\in\Sigma_K}(\tr(T_\sigma \xi_\sigma))\in \bbz \text{ for any } \xi\in \herm{n}\cap M_n(\frakn \mathcal{O}_K)  \right.\right\},\]
where $T_{\sigma}$ is the image of $T\in M_n(K)$ by the embedding corresponding to $\sigma\in\Sigma_K$.
and let $\Lambda_n(\frakn)_{\geq0}$ be a subset of $\Lambda_n(\frakn)$ consisting of non-negative definite matrices (for all embeddings $\sigma\in \Sigma_K$).
Then, a modular form $F \in M_\rho(\Gamma_K^{(n)}(\frakn))$ has the Fourier expansion
\[ F(Z)=\sum_{T\in \Lambda_n(\frakn)_{\geq0}}a(F,T)\mathbf{e}\left(\sum_{\va}(\tr(T_{\sigma_v} Z_v))\right),\]
where $a(F,T)\in V$, $\mathbf{e}(z)=\exp(2\pi\sqrt{-1}z)$.

If $a(F, T)=0$ unless $T$ is positive definite,
we say that $F$ is a  (holomorphic) hermitian cusp form of level $\frakn$, and weight $(\rho,V)$.
We also denote by $S_\rho(\Gamma_K^{(n)}(\frakn))$ a complex vector space of all cusp forms of level $\frakn$, and weight $(\rho,V)$.

Write the variable $Z=(X_v+\sqrt{-1}Y_v)_{\va}$ on $\hus{n}^\bfa$ with $X_v,Y_v \in \herm{n}$ for each $\va$. We
identify $\herm{n}$ with $\bbr^{n^2}$ and define measures $dX_v, dY_v$ as the
standard measures on $\bbr^{n^2}$. We define a measure $dZ$ on $\hus{n}^\bfa$ by
\[dZ= \prod_{\va}dX_v dY_v.\]
For $F,G \in M_\rho(\Gamma_K^{(n)}(\frakn))$, we can define the Petersson inner product
as
\[ (F,G)=\int_{D } \left<\rho(Y^{1/2},{}^tY^{1/2})F(Z),\rho(Y^{1/2},{}^tY^{1/2})G(Z)\right>(\prod_{\va}\det(Y_v)^{-2n})dZ,\]
where $Y=(Y_v)_{\va}=\Im(Z)$, $Y^{1/2}=(Y^{1/2}_v)_{\va}$ is a family of positive definite hermitian matrices such that
$(Y_v^{1/2})^2=Y_v$, and $D$ is a Siegel domain on $\hus{n}^\bfa$
for $\Gamma_K^{(n)}(\frakn)$. This integral converges if either $F$ or $G$ is a cusp
form.

We call a sequence of non-negative integers $\bfk=(k_1,k_2,\ldots)$ a dominant integral weight
if $k_i \geq k_{i+1}$ for all $i$, and $k_i=0$ for almost all $i$.
The largest integer $m$ such that $k_m \neq 0$ is called the length of $\bfk$ and denoted by $\ell(\bfk)$.
The set of dominant integral weights with length less than or equal to $n$
corresponds bijectively to the set of irreducible algebraic representations of $\mathrm{GL}_n(\bbc)$.

For a family $(\bfk,\bfl)=(\kv,\lv)_{\va}$ of pairs of dominant integral weights
such that $\ell(\kv)\leq n$ and $\ell(\lv)\leq n$ for any $v\in \bfa$,
we define the representation
$\rho_{n,(\bfk,\bfl)}=\bigboxtimes_{\va}(\rho_{n, \kv}\boxtimes\rho_{n, \lv})$ of $\calk_{n,\infty}^\bbc$.
We put $M_{(\bfk,\bfl)}(\Gamma_K^{(n)}(\frakn))=M_{\rho_{n,(\bfk,\bfl)}}(\Gamma_K^{(n)}(\frakn))$
and $S_{(\bfk,\bfl)}(\Gamma_K^{(n)}(\frakn))=S_{\rho_{n,(\bfk,\bfl)}}(\Gamma_K^{(n)}(\frakn))$.
When $\bfk=((\kappa_v,\ldots,\kappa_v))_{\va}$ and $\bfl=((0,\ldots,0))_{\va}$
for a family $\kappa=(\kappa_v)_{\va}$ of non-negative integers,
we also put $\det^\kappa=\rho_{n,(\bfk,\bfl)}$,
$M_\kappa(\Gamma_K^{(n)}(\frakn))=M_{(\bfk,\bfl)}(\Gamma_K^{(n)}(\frakn))$
and $S_\kappa(\Gamma_K^{(n)}(\frakn))=S_{(\bfk,\bfl)}(\Gamma_K^{(n)}(\frakn))$.

\subsection{As Functions on $\rmu(n,n)$.}\label{sec:functionsonU(n,n)}
Let $\calk_{n,\infty}$ be the stabilizer of $\bfi_n \in \hus{n}^\bfa$ in
$G_{n,\infty}$. Then, $\calk_{n,\infty}$ is a maximal compact subgroup of $G_{n,\infty}$
and isomorphic to $\prod_{\va}\rmu(n)\times\rmu(n)$, which is given by
\[\begin{array}{rccc}
		 & \prod_{\va}\rmu(n)\times\rmu(n) & \rightarrow & \calk_{n,\infty}                                                                                           \\
		 & (k_{1,v},k_{2,v})_{\va}         & \mapsto     & \left(\mathfrak{c}\begin{pmatrix}k_{2,v}&0\\0&{}^tk_{1,v}^{-1}\end{pmatrix}\mathfrak{c}^{-1}\right)_{\va},
	\end{array}\]
where $\mathfrak{c}=\dfrac{1}{\sqrt{2}}\begin{pmatrix}1 & \sqrt{-1}\\ \sqrt{-1} & 1\\ \end{pmatrix}\in M_{2n}(\bbc)$.
\begin{rem}
	Here we adopt this somewhat unusual isomorphism in Proposition~\ref{prop:automisom}
	to ensure consistency with the maximal compact subgroup $\calk_{n,\infty}$ and the automorphic factors
	$\lambda(g,Z)$ and $\mu(g,Z)$.
	That is, for
	\[
		k=\left(\mathfrak{c}\begin{pmatrix}k_{2,v}&0\\0&{}^tk_{1,v}^{-1}\end{pmatrix}\mathfrak{c}^{-1}\right)_{\va} \in \calk_{n,\infty},
	\]
	we have
	\[
		\lambda(k,\bfi_n)=(k_{1,v})_{\va} \quad \text{and} \quad \mu(k,\bfi_n)=(k_{2,v})_{\va}.
	\]
\end{rem}

We put $\mathfrak{g}_{n,v}=\mathrm{Lie}(G_{n,v})$,
$\mathfrak{k}_{n,v}=\mathrm{Lie}(\calk_{n,v})$ and let $\mathfrak{g}^\bbc_{n,v}$ and
$\mathfrak{k}^\bbc_{n,v}$ be the complexification of $\mathfrak{g}_{n,v}$ and
$\mathfrak{k}_{n,v}$, respectively.
We have the Cartan decomposition $\mathfrak{g}_{n,v}=\mathfrak{k}_{n,v}\oplus\mathfrak{p}_{n,v}$.
Furthermore, we put
\begin{align*}
	\kappa_{v,i,j}  & =\mathfrak{c}\begin{pmatrix}0 & 0\\ 0 &-e_{v,j,i}\\ \end{pmatrix}\mathfrak{c}^{-1},\quad
	\kappa'_{v,i,j}=\mathfrak{c}\begin{pmatrix} e_{v,i,j} & 0\\ 0 & 0\\ \end{pmatrix}\mathfrak{c}^{-1},                           \\
	\pi^{+}_{v,i,j} & =\mathfrak{c}\begin{pmatrix}0 & e_{v,i,j}\\ 0 & 0\\ \end{pmatrix}\mathfrak{c}^{-1},\quad \mathrm{and} \quad
	\pi^{-}_{v,i,j}=\mathfrak{c}\begin{pmatrix}0 & 0\\ e_{v,i,j} & 0\\ \end{pmatrix}\mathfrak{c}^{-1},
\end{align*}
where
$e_{i,j} \in M_{n,n}(\bbc)$ is the matrix whose only non-zero entry is 1 in the $(i,j)$-component.
$\{\kappa_{v,i,j}\}$ is a basis of $\mathfrak{k}^\bbc_{n,v}$.
Let $\mathfrak{p}^+_{n,v}$ (resp. $\mathfrak{p}^-_{n,v}$) be the $\bbc$-span of
$\{\pi^{+}_{v,i,j}\}$ (resp. $\{\pi^{-}_{v,i,j}\}$) in $\mathfrak{g}^\bbc_{n,v}$.
And then, put $\mathfrak{g}_{n}=\prod_{\va}\mathfrak{g}_{n,v}$, $\mathfrak{k}^\bbc_{n}=\prod_{\va}\mathfrak{k}^\bbc_{n,v}$, etc.

For a representation $(\rho, U_\rho)$ of $\calk_{n,\infty}$, we define the representation $(\rho',U_{\rho'}\ (=U_\rho))$ by
$\rho'(g_1,g_2)=\rho({}^tg_1^{-1},{}^tg_2^{-1})$, which is isomorphic to $\rho^*$.

\begin{dfn}
	Let $(\rho, U_\rho)$ be an irreducible unitary representation of $\calk_{n,\infty}$
	and $\Gamma_{n}$ a discrete subgroup of $G_{n}$. We embed $\Gamma_{n}$ diagonally into $G_{n,\infty}$ and consider it as a subgroup of $G_{n,\infty}$.
	Then, a hermitian modular form of type $\rho$ for $\Gamma_{n}$ is
	a $U_{\rho'}$-valued $C^\infty$-function $\phi$ on $G_{n,\infty}$ which satisfies the following conditions:
	\begin{enumerate}
		\item $\phi(\gamma gk)=\rho'(k)^{-1}\phi(g)$ for $k\in \calk_{n,\infty}$ and $\gamma\in \Gamma_{n}$,
		\item $\phi$ is annihilated by the right derivation of $\mathfrak{p}_{n}^-$,
		\item $\phi$ is of moderate growth.
	\end{enumerate}
\end{dfn}

We denote the space of moderate growth $C^\infty$-functions on $G_{n,\infty}$
which are invariant under left translation by $\Gamma_{n}$ by $C_\mathrm{mod}^\infty(\Gamma_{n}\backslash G_{n,\infty})$
and the space consisting of all hermitian modular forms of type $\rho$ for $\Gamma_n$
by $\left[C_\mathrm{mod}^\infty(\Gamma_{n}\backslash G_{n,\infty})\otimes U_\rho^*\right]^{\calk_{n,\infty},\mathfrak{p}_{n}^-=0}$.

For $f \in M_{\rho}(\Gamma_K^{(n)})$, we define a $U_{\rho}$-valued $C^\infty$-function $\phi_f$ on $G_{n,\infty}$ by
\[\phi_f(g)=(f|_{\rho} g )(\sqrt{-1})=\rho(M(g))^{-1}f(g\left<\bfi_n\right>)\]
for $g\in G_{n,v}$. Then, we have the following proposition.

\begin{prop}[e.g. \cite{Eischen2024Automorphic}]\label{prop:automisom}
	The above correspondence $f\mapsto \phi_f$ gives the isomorphism
	\[M_\rho(\Gamma_K^{(n)})\overset{\sim}{\longrightarrow}
		\left[C_\mathrm{mod}^\infty(\Gamma_K^{(n)}\backslash G_{n,v})\otimes U_{\rho'}\right]^{\calk_{n,\infty},\mathfrak{p}_{n}^-=0}.\]
\end{prop}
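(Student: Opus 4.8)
The plan is to check directly that $f\mapsto\phi_f$ is a well-defined linear injection into the target space and then to exhibit an explicit inverse; thus the four things to verify are the three defining properties on the right-hand side together with surjectivity. The backbone of every step is the cocycle identity $M(g_1g_2,Z)=M(g_1,g_2\langle Z\rangle)\,M(g_2,Z)$, taken componentwise for $\lambda$ and $\mu$, which is immediate from the definitions of $\lambda$ and $\mu$ together with the fact that the pair of automorphy factors multiplies under the product in $G_{n,\infty}$.

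First I would treat well-definedness. Left $\Gamma_K^{(n)}$-invariance of $\phi_f$ follows by combining the cocycle identity with the modularity $f|_{\rho}[\gamma]=f$: expanding $\phi_f(\gamma g)$ and using $f(\gamma w)=\rho(M(\gamma,w))f(w)$ at $w=g\langle\bfi_n\rangle$, the factor $\rho(M(\gamma,g\langle\bfi_n\rangle))$ cancels and $\phi_f(\gamma g)=\phi_f(g)$. For the right $K_{n,\infty}$-equivariance (condition (1)) the one genuine computation is to evaluate $M(k)$ for $k$ corresponding to $(k_{1,v},k_{2,v})_{\va}$ under the chosen isomorphism. Using $k=\mathfrak{c}\,\mathrm{diag}(k_{2,v},{}^tk_{1,v}^{-1})\,\mathfrak{c}^{-1}$ and $k\langle\bfi_n\rangle=\bfi_n$, a direct $2\times 2$-block computation at $Z=\bfi_n$ gives $\lambda(k)={}^tk_{1,v}^{-1}$ and $\mu(k)=\overline{k_{2,v}}={}^tk_{2,v}^{-1}$, the last equality because $k_{2,v}\in\rmu(n)$. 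Hence $\rho(M(k))=\rho({}^tk_1^{-1},{}^tk_2^{-1})=\rho'(k)$, so that $\phi_f(gk)=\rho'(k)^{-1}\phi_f(g)$. This is precisely the point of the particular normalization of the isomorphism $\prod_{\va}\rmu(n)\times\rmu(n)\cong K_{n,\infty}$. Moderate growth of $\phi_f$ then reduces, via this $K_{n,\infty}$-equivariance and the Iwasawa/Cartan decomposition, to boundedness of $f$ on Siegel domains, which follows from its Fourier expansion supported on $T\geq 0$.

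The analytically substantive step, and the one I expect to be the main obstacle, is the equivalence between holomorphy of $f$ and annihilation of $\phi_f$ by the right action of $\mathfrak{p}_n^-$ (condition (2)). The plan is to compute the right derivation of $\pi^-_{v,i,j}\in\mathfrak{p}_n^-$ on $\phi_f$ at a point $g$ and to identify it, after conjugating through the automorphy factor $\rho(M(g,\cdot))^{-1}$, with a nonzero scalar multiple of the Wirtinger derivative $\partial f/\partial\overline{Z}_{v,i,j}$ at $g\langle\bfi_n\rangle$. The mechanism is that under the differential of the orbit map $g\mapsto g\langle\bfi_n\rangle$ the subspace $\mathfrak{p}_n^+$ is sent to the holomorphic tangent space and $\mathfrak{p}_n^-$ to the anti-holomorphic one; since both $M(g,\cdot)$ and $\rho$ are holomorphic, the automorphy factor contributes nothing to the anti-holomorphic part, so $\pi^-_{v,i,j}\phi_f=0$ for all $v,i,j$ if and only if $\bar\partial f=0$, i.e. $f$ is holomorphic. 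The careful bookkeeping of this orbit-map differential together with the derivative of the automorphy factor is the heart of the argument.

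Finally, bijectivity. Injectivity is immediate: $G_{n,\infty}$ acts transitively on $\hus{n}^{\bfa}$, so $\phi_f=0$ forces $f(g\langle\bfi_n\rangle)=0$ for all $g$, whence $f\equiv 0$. For surjectivity I would construct the inverse directly: given $\phi$ in the target space, set $f(Z)=\rho(M(g))\,\phi(g)$ for any $g$ with $g\langle\bfi_n\rangle=Z$. The right $K_{n,\infty}$-equivariance of $\phi$ together with the identity $\rho(M(k))=\rho'(k)$ established above shows this is independent of the choice of $g$, since two such choices differ by an element of $K_{n,\infty}$; the $\mathfrak{p}_n^-$-annihilation yields holomorphy of $f$ by reversing the computation of the previous paragraph; left $\Gamma_K^{(n)}$-invariance of $\phi$ gives $f|_{\rho}[\gamma]=f$; and moderate growth transfers back to the required growth condition (and to holomorphy at the cusps in the exceptional case $n=1$, $\kp=\bbq$). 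By construction $\phi_f=\phi$, so $f\mapsto\phi_f$ is the asserted isomorphism of complex vector spaces.
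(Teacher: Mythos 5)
The paper does not actually prove this proposition --- it is stated as a known fact with a citation --- so there is no internal proof to compare against; what you have written is the standard argument, and it is essentially correct. Your block computation giving $\lambda(k)={}^tk_{1,v}^{-1}$ and $\mu(k)=\overline{k_{2,v}}={}^tk_{2,v}^{-1}$, hence $\rho(M(k))=\rho'(k)$, checks out and is precisely the reason the paper chooses its ``slightly strange'' parametrization of $K_{n,\infty}$; the cocycle identity, left $\Gamma_K^{(n)}$-invariance, injectivity from transitivity, and the explicit inverse are all routine and correctly handled. The one step where I would press you is the $\mathfrak{p}_n^-$ equivalence: the slogan that ``the automorphy factor is holomorphic, so it contributes nothing to the anti-holomorphic part'' is not by itself sufficient. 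The derivative of $\rho(M(g\exp(t\pi^-_{v,i,j})))^{-1}$ has two sources: the dependence of $M(g,\cdot)$ on $Z_t=\exp(t\pi^-_{v,i,j})\langle\bfi_n\rangle$, which indeed dies because $M(g,\cdot)$ is holomorphic in $Z$ (note $\mu(g,Z)$ depends on ${}^tZ$, not $\overline{Z}$) and the holomorphic component of $\dot Z_t$ vanishes for $\mathfrak{p}_n^-$; and the factor $M(\exp(t\pi^-_{v,i,j}))$ itself, which is \emph{not} covered by a holomorphy argument since $\mu$ involves complex conjugation of the group entries and is therefore anti-linear in the Lie algebra element. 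Its complexified derivative must be shown to vanish by the explicit block computation with $\mathfrak{c}$ (it does: both $\dot\lambda$ and $\dot\mu$ come out to zero after correctly complexifying $\pi^-=X+\sqrt{-1}Y$ with $X,Y\in\mathfrak{g}_{n,v}$ and remembering that $\exp(t\pi^-)$ is not a curve in the real group). Once that is supplied, the surviving term is an invertible factor times $\partial f/\partial\overline{Z}_{v,i,j}$, and the equivalence with holomorphy follows as you say.
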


\subsection{As Functions on Unitary Groups over the Adeles.}
There is a unique compact open subgroup $\calk_{n,0}(\frakn)$ of $G_{n,0}$ such that
\[\Gamma_K^{(n)}(\frakn)=G_n\cap \calk_{n,0}(\frakn)\calk_{n,\infty}\]
for an integral ideal $\frakn$ of $\kp$.
We put $\calk_{n,v}(\frakn)=\calk_{n,0}(\frakn)_v$.
We remark that $\calk_{n,v}(\frakn)=\calk_{n,v}$ for a finite place $v\nmid\frakn$.

\begin{dfn}
	A hermitian automorphic form on $G_{n,\adele}$ of level $\frakn$, and weight $(\rho,V)$ is defined to be a $V$-valued smooth function $f$ on $G_{n,\adele}$
	such that it is left $G_{n}$-invariant, right $\calk_{n,0}(\frakn)$-invariant, right $(\calk_{n,\infty}, \rho)$-equivariant,
	of moderate growth, and $Z(\mathfrak{g})$-invariant, where $Z(\mathfrak{g})$ denotes the center of the complexified Lie algebra  $\mathfrak{g}$ of $\rmu(n,n)$.

	We denote by $\mathcal{A}_n(\rho,\frakn)$ the complex vector space of hermitian automorphic forms on $G_{n,\adele}$ of weight $\rho$.
\end{dfn}

\begin{dfn}
	A hermitian automorphic form $f\in\mathcal{A}_n(\rho,\frakn)$ is called a cusp form if
	\[\int_{N(\kp)\backslash N(\adele)}f(ng)dn=0\]
	for any $g \in G_{n,\adele}$ and any unipotent radical $N$ of each proper
	parabolic subgroup of $\rmu_n$.

	We denote by $\mathcal{A}_{0,n}(\rho,\frakn)$ the complex vector space of cusp
	forms on $G_{n,\adele}$ of weight $\rho$.
\end{dfn}

We put
\[ g_Z =\begin{pmatrix}
		Y^{1/2} & XY^{-1/2} \\ 0_n& Y^{-1/2}
	\end{pmatrix}\in G_{n,\infty}\]
for $Z=X+\sqrt{-1}Y\in\hus{n}^\bfa$.
For $f\in \mathcal{A}_n(\rho,\frakn)$, we define a function $\hat{f}$ on
$\hus{n}^\bfa$ by
\[\hat{f}(Z)=\rho(M(g_z))f(g_z).\]
Then, we have $\hat{f}\in M_\rho(\Gamma_K^{(n)}(\frakn))$.
Moreover, if $f \in	\mathcal{A}_{0,n}(\rho,\frakn)$,
then we have $\hat{f}\in S_\rho(\Gamma_K^{(n)}(\frakn))$.

For $v\in \bfh$, we take the Haar measure $dg_v$ on $G_{n,v}$ normalized so that the volume of $\calk_{n,v}$ is 1.
For $v\in \bfa$, we take the Haar measure $dg_v$ on $G_{n,v}$ such that the volume of $\calk_{n,v}$ is 1
and the Haar measure on $\hus{n}\cong G_{n,v}/\calk_{n,v}$ induced from $dg_v$ is $(\det Y_v)^{-2n}dZ_v$.
Using these, we fix the Haar measure $dg=\prod_vdg_v$ on $G_{n,\adele}$.
We define the Petersson inner product on $\mathcal{A}_n(\rho,\frakn)$ as
\[ (f,h)=\int_{G_{n}\backslash G_{n,\adele}} \left<f(g),h(g)\right>dg,\]
for $f,h \in \mathcal{A}_n(\rho,\frakn)$, where $dg$ is a Haar measure on $G_{n}\backslash G_{n,\adele}$
induced from that on $G_{n,\adele}$.

For a finite place $v\in\bfh$ such that corresponds to a prime ideal $\mathfrak{p}$ of $\kp$,
let $\mathcal{H}_{n,\mathfrak{p}}$ be the convolution algebra of
left and right $\calk_{n,v}$-invariant compactly supported $\bbq$-valued functions of $G_{n,v}$,
which is called the spherical Hecke algebra at $\mathfrak{p}$.
The spherical Hecke algebra $\mathcal{H}_{n,\mathfrak{p}}$ at $v$ acts
on the set of continuous right $\calk_{n,v}$-invariant functions on $G_{n,v}$ (or on $G_{n,\adele}$) by right convolution,
i.e., for a continuous right $\calk_{n,v}$-invariant function $f$ on $G_{n,v}$ (or on $G_{n,\adele}$) and $\eta\in\mathcal{H}_{n,\mathfrak{p}}$, we put
\[(\eta\cdot f)(g) =\int_{G_{n,v}}f(gh^{-1})\eta(h)dh,\]
where $dh$ is a Haar measure on $G_{n,v}$ normalized so that the volume of $\calk_{n,v}$ is 1.

\begin{dfn}
	We say that a continuous right $\calk_{n,v}$-invariant function $f$ on $G_{n,v}$ (or on $G_{n,\adele}$) is a $\mathfrak{p}$-Hecke eigenfunction
	if $f$ is an eigenfunction under the action of $\mathcal{H}_{n,\mathfrak{p}}$.
\end{dfn}

\begin{dfn}
	We say that a hermitian automorphic form $f\in\mathcal{A}_n(\rho,\frakn)$ is a Hecke eigenform
	if $f$ is a $\mathfrak{p}$-Hecke eigenfunction for any $\mathfrak{p}$ not dividing $\frakn$.
\end{dfn}

\section{Differential Operators}
We put $G_{(n)}=G_{n,\infty}$ and $\calk_{(n)}=\calk_{n,\infty}$ as a symbol only for this section.
Furthermore, when $n$ is obvious, we write $G = G_{(n)}$ and $K=\calk_{(n)}$ for short.

\subsection{Formulas on Derivatives}
Following Ibukiyama \cite{Ibukiyama2022Differential}, Ibukiyama-Zagier
\cite{Ibukiyama2014Higher}, and others, we will provide some formulas.

\begin{lem}\label{lem:det} For a positive integer $d$ and $Z \in \hus{n}$, we have
	\[\det\left(\frac{Z}{\sqrt{-1}}\right)^{-d}
		=(2\pi)^{-dn}\int_{M_{n,d}(\bbc)}\exp\left(\frac{\sqrt{-1}}{2}\tr(\adj{X}ZX)\right)dX,\]
	where $dX$ is the Lebesgue measure on $M_{n,d}(\bbc)\cong\bbr^{2nd}$. Moreover,
	if $d\geq n$, then this is equal to
	\[c_n(d)\int_{{\herm{n}}_{>0}}\exp\left(\frac{\sqrt{-1}}{2}\tr(TZ)\right)(\det T)^{d-n}dT,\]
	where
	$c_n(d)=2^{-dn}\pi^{-\frac{n(n-1)}{2}}\left(\prod_{i=0}^{n-1}\Gamma(d-i)\right)^{-1}$
	and $dT$ is the Lebesgue measure on $\herm{n}\cong\bbr^{n^2}$.
\end{lem}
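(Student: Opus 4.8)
The plan is to reduce the integral over $M_{n,d}(\bbc)$ to a one-column complex Gaussian for the first identity, and then to convert that integral into a gamma integral over the cone ${\herm{n}}_{>0}$ by pushing the Lebesgue measure forward along $X\mapsto X\adj{X}$ for the second.

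For the first identity, write $X=(x_1,\dots,x_d)$ with columns $x_j\in\bbc^n$; since $\tr(\adj{X}ZX)=\sum_{j=1}^{d}\adj{x_j}Zx_j$, the integrand factors and the integral over $M_{n,d}(\bbc)$ equals the $d$-th power of $\int_{\bbc^n}\exp(\tfrac{\sqrt{-1}}{2}\adj{x}Zx)\,dx$. So it suffices to treat $d=1$, i.e. to show $\int_{\bbc^n}\exp(\tfrac{\sqrt{-1}}{2}\adj{x}Zx)\,dx=(2\pi)^n\det(Z/\sqrt{-1})^{-1}$. I would deduce this from the standard complex Gaussian $\int_{\bbc^n}\exp(-\adj{x}Wx)\,dx=\pi^n(\det W)^{-1}$, valid whenever the hermitian part $\tfrac12(W+\adj{W})$ is positive definite, applied to $W=-\tfrac{\sqrt{-1}}{2}Z$. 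Here $\tfrac12(W+\adj{W})=\tfrac12\Im(Z)>0$ because $Z\in\hus{n}$, which simultaneously gives absolute convergence; and $\det W=(-\sqrt{-1}/2)^n\det Z$, so that $\pi^n(\det W)^{-1}=(2\pi)^n(\sqrt{-1})^n(\det Z)^{-1}=(2\pi)^n\det(Z/\sqrt{-1})^{-1}$. Taking the $d$-th power yields the first claim. If one prefers, the complex Gaussian may be reduced to the real one at $Z=\sqrt{-1}Y$, $Y>0$, and then extended to all $Z\in\hus{n}$ by analytic continuation, both sides being holomorphic in $Z$ with locally uniformly convergent integrals.

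For the second identity assume $d\geq n$ and use $\tr(\adj{X}ZX)=\tr(ZX\adj{X})$ to rewrite the integrand in terms of $T=X\adj{X}\in{\herm{n}}_{\geq 0}$. The crucial point is the change of variables: the pushforward of the Lebesgue measure $dX$ under $X\mapsto X\adj{X}$ is $C_{n,d}(\det T)^{d-n}dT$ on ${\herm{n}}_{>0}$ for some constant $C_{n,d}$. The exponent and the proportionality follow from homogeneity: under $X\mapsto gX$ with $g\in\mathrm{GL}_n(\bbc)$ one has $dX\mapsto|\det g|^{2d}\,dX$ and $T\mapsto gT\adj{g}$, while $(\det T)^{d-n}dT\mapsto|\det g|^{2(d-n)}|\det g|^{2n}(\det T)^{d-n}dT=|\det g|^{2d}(\det T)^{d-n}dT$, using $d(gT\adj{g})=|\det g|^{2n}\,dT$ on $\herm{n}$; since $\mathrm{GL}_n(\bbc)$ acts transitively on the cone, the relatively invariant measure of this homogeneity type is unique up to scalar, which forces the stated form. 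Substituting into the first identity gives the second form with $c_n(d)=(2\pi)^{-dn}C_{n,d}$, and I determine the constant without computing $C_{n,d}$ by evaluating both forms at $Z=\sqrt{-1}I_n$: the left side is $1$, and the right side becomes $c_n(d)\int_{{\herm{n}}_{>0}}\exp(-\tfrac12\tr T)(\det T)^{d-n}dT$. Rescaling $T\mapsto 2T$ (real Jacobian $2^{n^2}$, with $\det$ contributing $2^n$) and invoking the complex Siegel gamma integral $\int_{{\herm{n}}_{>0}}e^{-\tr T}(\det T)^{s-n}dT=\pi^{n(n-1)/2}\prod_{i=0}^{n-1}\Gamma(s-i)$ at $s=d$ (valid since $d>n-1$) evaluates this to $c_n(d)\,2^{dn}\pi^{n(n-1)/2}\prod_{i=0}^{n-1}\Gamma(d-i)$; setting it equal to $1$ recovers the stated $c_n(d)$.

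The main obstacle is the change-of-variables step: establishing that $X\mapsto X\adj{X}$ pushes Lebesgue measure to a density proportional to $(\det T)^{d-n}$ on the full cone, which requires $d\geq n$ so that a generic $X$ has rank $n$ and the image is $n^2$-dimensional. The homogeneity argument above pins down the shape of the measure cleanly, and the complex Siegel gamma integral — which I would either cite or prove by induction on $n$ via a block (bordering) decomposition of $T$ — supplies the remaining constant, so that no explicit Stiefel-manifold volume computation is needed.
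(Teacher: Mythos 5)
Your proof is correct. For the first identity your route (factor the integral over the $d$ columns of $X$ and quote the one\hbox{-}variable complex Gaussian $\int_{\bbc^n}\exp(-\adj{x}Wx)\,dx=\pi^n(\det W)^{-1}$ for $W=-\tfrac{\sqrt{-1}}{2}Z$) is essentially the same computation as the paper's, which instead specializes to $Z=\sqrt{-1}Y$, substitutes $X\mapsto AX$ with $Y=A^2$, and extends by holomorphy; both are the standard Gaussian evaluation and your sign/power bookkeeping checks out. For the second identity you genuinely diverge: the paper performs an explicit polar decomposition $X=LQ$ into a lower\hbox{-}triangular factor and a Stiefel factor, computes the Jacobian $dX=2^{-n}(\det T)^{d-n}\,dT\,dQ$ by hand, and obtains $c_n(d)$ from the product of sphere volumes $\mathrm{vol}(S^{2d-2i-1})$. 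You instead argue that the pushforward of $dX$ under $X\mapsto X\adj{X}$ is a locally finite measure on the open cone ${\herm{n}}_{>0}$ that is relatively invariant under $T\mapsto gT\adj{g}$ with multiplier $\abs{\det g}^{2d}$, hence proportional to $(\det T)^{d-n}dT$ by uniqueness of relatively invariant measures on the homogeneous cone, and then you fix the constant by evaluating at $Z=\sqrt{-1}I_n$ against the complex Siegel gamma integral $\int_{{\herm{n}}_{>0}}e^{-\tr T}(\det T)^{s-n}dT=\pi^{n(n-1)/2}\prod_{i=0}^{n-1}\Gamma(s-i)$. This buys you a cleaner, coordinate\hbox{-}free derivation of the density (and makes transparent why $d\geq n$ is needed, via full rank of generic $X$ and local finiteness of the pushforward), at the cost of importing the Siegel gamma integral as an external input --- which, if proved rather than cited, hides a computation of roughly the same weight as the paper's Stiefel Jacobian. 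Your normalization arithmetic ($T\mapsto 2T$ contributing $2^{n^2}\cdot 2^{n(d-n)}=2^{nd}$) reproduces exactly the stated $c_n(d)=2^{-dn}\pi^{-n(n-1)/2}\bigl(\prod_{i=0}^{n-1}\Gamma(d-i)\bigr)^{-1}$.
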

\begin{proof}
	This is a well-known fact and not difficult to prove, but we will provide the proof for the reader.

	Since these equations are holomorphic on $Z\in\hus{n}$, it is sufficient to show
	when $\Re(Z)=0$. In this case, we can write $Z=\sqrt{-1}Y$ with a positive
	definite hermitian matrix $Y$. There exists a positive definite hermitian matrix
	$A$ such that $Y=A^2$. Then, we have
	\begin{align*}
		\int_{M_{n,d}(\bbc)}\exp\left(\frac{\sqrt{-1}}{2}\tr(\:\adj{X}ZX)\right)dX
		 & =\int_{M_{n,d}(\bbc)}\exp\left(-\frac{1}{2}\tr(\:\adj{X}YX)\right)dX             \\
		 & =\int_{M_{n,d}(\bbc)}\exp\left(-\frac{1}{2}\tr(\:\adj{(AX)}(AX))\right)dX        \\
		 & =\det(A)^{-2d}\int_{M_{n,d}(\bbc)}\exp\left(-\frac{1}{2}\tr(\:\adj{X}X)\right)dX \\
		 & =\det(Y)^{-d}(2\pi)^{dn}
	\end{align*}
	Thus, the first equation holds.

	Now assume $d\geq n$. We decompose $X=\:^t(x_1\cdots x_n)\in M_{n,d}(\bbc)$ as
	$X=LQ$ by a lower triangular matrix $L=(l_{i,j})\in M_{n}(\bbc)$ with positive
	real diagonal components and $Q=\:^t(v_1\cdots v_n)\in M_{n,d}(\bbc)$ such that
	$\adj{Q}\,Q=I_n$. Let $d\mu_n$ be a standard measure of the $n$-sphere $S^n$. Then
	we have
	\begin{align*}
		dx_1 & =d(l_{1,1}v_1)=l_{1,1}^{2d-1}dl_{1,1}d\mu_{2d-1},                    \\
		dx_2 & =d(l_{2,1}v_1+l_{2,2}v_2)=l_{2,2}^{2d-3}dl_{2,1}dl_{2,2}d\mu_{2d-3}, \\
		     & \vdots
	\end{align*}
	We note that $dl_{i,i}$ is a Lebesgue measure on $\bbr$, but $dl_{i,j} \ (i\neq j)$ is a Lebesgue measure on $\bbc$.
	Multiply all of the above equations together to obtain
	\[dX=\prod_{i=1}^{n}l_{i,i}^{2d+1-2i}\prod_{i,j}dl_{i,j}dQ,\]
	where $dQ=d\mu_{2d-1}d\mu_{2d-3}\cdots d\mu_{2d-2n+1}$. If we put
	$T=X\adj{X}=L\adj{L}$, we can calculate that
	\[dT:=\prod_{i\leq j}dt_{i,j}=2^n\prod_{i=1}^{n}l_{i,i}^{2n+1-2i}\prod_{i,j}dl_{i,j}.\]
	Combining these equations gives
	\[dX=2^{-n}\prod_{i=1}^{n}l_{i,i}^{2d-2n}dTdQ=2^{-n}(\det T)^{d-n}dTdQ.\]
	From the above, we obtain
	\begin{align*}
		\int_{M_{n,d}(\bbc)} & \exp\left(-\frac{1}{2}\tr(\adj{X}YX)\right)dX                                                                                                    \\
		                     & = 2^{-n}\prod_{i=0}^{n-1}\mathrm{vol}(S^{2d-2i-1})\cdot\int_{{\herm{n}}_{>0}}\exp\left(-\frac{1}{2}\tr(TY)\right)(\det T)^{d-n}dT                \\
		                     & = \pi^{n(2d-n+1)/2}\left(\prod_{i=0}^{n-1}\Gamma(d-i)\right)^{-1}\cdot\int_{{\herm{n}}_{>0}}\exp\left(-\frac{1}{2}\tr(TY)\right)(\det T)^{d-n}dT
	\end{align*}
	Thus, the second equation holds.
\end{proof}

To simplify the notation, multi-variable functions and operations are often denoted as a single variable, e.g.,
$X Y=(X_v Y_v)_{\va}$, ${}^tX=({}^tX_v)_{\va}$, $\det(X)^\kappa=\prod_{\va}\det(X_v)^{\kappa_v}$, and  $\tr(X)=\sum_{\va}\tr(X_v)$.
We set the functions
\begin{align*}
	\delta_g(Z)             & = \det(CZ+D),                                                 \\
	\Delta_g(Z)             & =(\Delta_g(Z)_v)_{\va}=((CZ+D)^{-1}C),                        \\
	\varrho_g(Z; \kappa ,s) & =\abs{\det(CZ+D)}^{\kappa-2s}\det(CZ+D)^{-\kappa },           \\
	\delta(g)               & =\delta_g(\bfi_n),                                            \\
	\Delta(g)               & =(\Delta(g)_v)_{\va}=((C\bfi_n+D)^{-1}(C+D\bfi_n)),           \\
	\varrho(g; \kappa ,s)   & = \abs{\det(C\bfi_n+D)}^{\kappa-2s}\det(C\bfi_n+D)^{-\kappa } \\
\end{align*}
for $g=\begin{pmatrix}A&B\\C&D\end{pmatrix}_{\va} \in G$, $Z=(Z_{v,i,j})_{\va}\in \hus{n}^\bfa$,
a family $\kappa=(\kappa_v)_{\va}$ of positive integers and a complex variable $s$.
By a direct calculation, we obtain the following formula.
\begin{lem}\label{lem:deri}
	We have
	\begin{align*}
		\frac{\partial}{\partial Z_{v,i,j}}\delta_g(Z)             & =\delta_g(Z)\Delta_g(Z)_{v,j,i},                   \\
		\frac{\partial}{\partial Z_{v,i,j}}\delta (g,Z)^{-\kappa } & =-\kappa \delta_g(Z)^{-\kappa}\Delta_g(Z)_{v,i,j}, \\
		\frac{\partial}{\partial Z_{v,i,j}}\Delta_g(Z)_{v,s,t}     & =-\Delta_g(Z)_{v,s,i}\Delta_g(Z)_{v,j,t}.
	\end{align*}
	Similarly, we have
	\begin{align*}
		\pi^+_{v,i,j}\cdot\delta(g)            & =\delta(g)\Delta(g)_{v,j,i},                  \\
		\pi^+_{v,i,j}\cdot\delta (g)^{-\kappa} & =-\kappa\delta(g)^{-\kappa}\Delta(g)_{v,i,j}, \\
		\pi^+_{v,i,j}\cdot\Delta(g)_{v,s,t}    & =-\Delta(g)_{v,s,i}\Delta(g)_{v,j,t}.
	\end{align*}
	Here, $\pi^+_{v,i,j}$ is defined in Section~\ref{sec:functionsonU(n,n)}.
	In particular, we have
	\begin{align*}
		\frac{\partial}{\partial Z_{v,i,j}}(\varrho_g(Z;\kappa,s) ) & =\varrho_g(Z;\kappa,s) \left(-\frac{\kappa}{2}-s\right)\Delta_g(Z)_{v,j,i}, \\
		\pi^+_{v,i,j}(\varrho(g;\kappa,s))                          & =\varrho(g;\kappa,s) \left(-\frac{\kappa}{2}-s\right)\Delta(g)_{v,j,i}.
	\end{align*}
\end{lem}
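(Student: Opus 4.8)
The plan is to treat the two groups of formulas separately, since the first concerns the holomorphic derivatives $\partial/\partial Z_{v,i,j}$ of functions of the matrix variable $Z$ with $g$ held fixed, while the second concerns the right action of the Lie-algebra element $\pi^+_{v,i,j}$ on functions of $g$ with $Z$ specialized to $\bfi_n$. In both cases the engine is elementary matrix calculus: Jacobi's formula $\partial_x\det M=\det M\cdot\tr(M^{-1}\partial_x M)$ together with the inverse rule $\partial_x M^{-1}=-M^{-1}(\partial_x M)M^{-1}$.

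For the first block I would fix $v$ and write $W=C_vZ_v+D_v$, so that $\partial W/\partial Z_{v,i,j}=C_v e_{i,j}$ is the matrix whose $j$-th column is the $i$-th column of $C_v$ and whose other columns vanish. Jacobi's formula then gives $\partial_{Z_{v,i,j}}\det W=\det W\cdot\tr(W^{-1}C_ve_{i,j})=\det W\cdot(W^{-1}C_v)_{j,i}$, which is the first identity; the $\delta_g^{-\kappa}$ identity is the chain rule applied to this, and the $\Delta_g(Z)_{v,s,t}$ identity follows from the inverse rule applied to $\Delta_g=W^{-1}C_v$. For $\varrho_g$ the only extra ingredient is Wirtinger calculus: writing $|\det W|^{\kappa-2s}\det W^{-\kappa}=(\det W)^{-\kappa/2-s}(\overline{\det W})^{(\kappa-2s)/2}$ and noting that $\partial/\partial Z_{v,i,j}$ annihilates the antiholomorphic factor $\overline{\det W}$, so that only the exponent $-\kappa/2-s$ survives, producing the stated scalar $(-\tfrac{\kappa}{2}-s)$.

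For the second block I would compute the first-order variation of right translation. Expanding $\exp(t\pi^+_{v,i,j})=I+t\pi^+_{v,i,j}+O(t^2)$ and carrying the defining off-diagonal block through the conjugation by $\mathfrak{c}$, one finds that replacing $g$ by $g\exp(t\pi^+_{v,i,j})$ perturbs the lower blocks so that $C_v\bfi_n+D_v\mapsto(C_v\bfi_n+D_v)+t(C_v+D_v\bfi_n)e_{i,j}+O(t^2)$. Differentiating at $t=0$ and reusing Jacobi's formula and the inverse rule, now with $C_v$ replaced by $C_v+D_v\bfi_n$, reproduces the $\pi^+$ identities verbatim; the point is that the perturbing matrix $C_v+D_v\bfi_n$ is exactly the numerator in the definition $\Delta(g)=(C\bfi_n+D)^{-1}(C+D\bfi_n)$, so that $\Delta(g)$, rather than $\Delta_g(\bfi_n)$, is what appears.

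The routine part is the matrix calculus; the one genuinely delicate step is the second-block variation, where I expect the main obstacle to be bookkeeping the conjugation by $\mathfrak{c}$ correctly so as to extract precisely the factor $C_v+D_v\bfi_n$ and the correct index placement, $(j,i)$ versus $(i,j)$. Once that variation is pinned down, the parallelism with the holomorphic computation makes the remaining $\pi^+$ identities immediate.
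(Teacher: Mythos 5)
Your proof is correct and follows essentially the same route the paper intends: the lemma is stated there as a ``direct calculation,'' and your combination of Jacobi's formula, the inverse rule, the Wirtinger splitting of $\abs{\det(CZ+D)}^{\kappa-2s}$, and the first-order expansion of $g\exp(t\pi^+_{v,i,j})$ through the conjugation by $\mathfrak{c}$ reproduces all the identities (in particular the perturbation of $C\bfi_n+D$ is exactly $t(C+D\bfi_n)e_{i,j}$, as you predict). Two small points to record: your computation yields $\Delta_g(Z)_{v,j,i}$ in the second identity, so the printed index order $(i,j)$ there appears to be a typo (it is inconsistent with the first identity and with the ``in particular'' line, and $\Delta_g(Z)=(CZ+D)^{-1}C$ is not symmetric for $\rmu(n,n)$); and for the $\pi^+_{v,i,j}\varrho$ formula the ``parallelism'' requires the explicit observation that $\pi^+_{v,i,j}\cdot\overline{\delta(g)}=0$, which follows since $\overline{\pi^+_{v,i,j}}=\pi^-_{v,i,j}$ and the holomorphic differential of $\delta$ in the direction $g\pi^-_{v,i,j}$ vanishes.
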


We put $\partial_Z=\left(\frac{\partial}{\partial Z_{v,i,j}}\right)_{\va}$ and $\pi^+=\left(\pi^+_{v,i,j}\right)_{\va}$.
As a simple consequence, we obtain the following lemma.
\begin{lem}\label{lem:difpoly}
	\begin{enumerate}
		\item For a polynomial $P(T)\in\bbc[T]$ with a family $T=(T_v)_{\va}$ of degree $n$ matrices of variables and
		      $\kappa=(\kappa_v)_{\va} \in(\bbz\geq 0)^\bfa$, there is a polynomial $Q(T;\kappa,s)\in\bbc[T]$ such that
		      \begin{align*}
			      P(\partial_Z)\varrho_g(Z;\kappa,s) & =\varrho_g(Z;\kappa,s) Q(\Delta_g(Z);\kappa,s), \\
			      P(\pi^+)\varrho(g;\kappa,s)        & =\varrho(g;\kappa,s)Q(\Delta(g);\kappa,s).
		      \end{align*}

		\item The polynomial $Q$ in (1) also satisfies
		      \begin{align*}
			      P(\partial_Z)\delta (g,Z)^{-(\kappa/2+s)} & =\delta (g,Z)^{-(\kappa/2+s)} Q(\Delta_g(Z);\kappa,s), \\
			      P(\pi^+)\delta(g)^{-(\kappa/2+s)}         & =\delta(g)^{-(\kappa/2+s)}Q(\Delta(g);\kappa,s).
		      \end{align*}
	\end{enumerate}

\end{lem}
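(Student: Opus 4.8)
The plan is to reduce all four identities to one closure statement, proved by an induction that peels off one derivative at a time. Call a function of the shape $\delta_g(Z)^{-(\kappa/2+s)}R(\Delta_g(Z))$, with $R\in\bbc[T]$ a polynomial in the entries of the matrices $T=(T_v)_{\va}$, a \emph{$\Delta$-function}. The core claim is that every operator $\partial/\partial Z_{v,i,j}$ sends $\Delta$-functions to $\Delta$-functions, the effect on the polynomial part being an explicit place-wise operation on $\bbc[T]$. Indeed, combining the product rule with the two formulas $\partial_{Z_{v,i,j}}\delta_g(Z)^{-(\kappa/2+s)}=-(\kappa/2+s)\delta_g(Z)^{-(\kappa/2+s)}\Delta_g(Z)_{v,i,j}$ and $\partial_{Z_{v,i,j}}\Delta_g(Z)_{v,p,q}=-\Delta_g(Z)_{v,p,i}\Delta_g(Z)_{v,j,q}$ of Lemma~\ref{lem:deri} gives
\[
\frac{\partial}{\partial Z_{v,i,j}}\Bigl(\delta_g(Z)^{-(\kappa/2+s)}R(\Delta_g(Z))\Bigr)=\delta_g(Z)^{-(\kappa/2+s)}\bigl(\mathcal{D}_{v,i,j}R\bigr)(\Delta_g(Z)),
\]
where $\mathcal{D}_{v,i,j}$ is the polynomial operator
\[
\mathcal{D}_{v,i,j}R=-\left(\tfrac{\kappa}{2}+s\right)T_{v,i,j}R-\sum_{p,q}\frac{\partial R}{\partial T_{v,p,q}}\,T_{v,p,i}T_{v,j,q}
\]
on $\bbc[T]$, which manifestly preserves $\bbc[T]$.

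Granting the core claim, the holomorphic identity in part (2) follows by induction on the number of derivatives composing the monomials of $P$. The operators $\partial/\partial Z_{v,i,j}$ commute, so $P(\partial_Z)$ is a $\bbc$-linear combination of iterated derivatives; starting from $R=1$ (the case $P=1$, giving $Q=1$) and applying the displayed recursion once per derivative, I obtain $P(\partial_Z)\delta_g^{-(\kappa/2+s)}=\delta_g^{-(\kappa/2+s)}Q(\Delta_g(Z);\kappa,s)$, where $Q$ is the result of applying the corresponding composite of the $\mathcal{D}_{v,i,j}$ to $1$. Part (1) now follows with the \emph{same} $Q$: writing $\varrho_g(Z;\kappa,s)=\delta_g(Z)^{-(\kappa/2+s)}\,\overline{\delta_g(Z)}^{(\kappa-2s)/2}$, the antiholomorphic factor is constant under each $\partial/\partial Z_{v,i,j}$, hence $P(\partial_Z)\varrho_g=\overline{\delta_g}^{(\kappa-2s)/2}\,P(\partial_Z)\delta_g^{-(\kappa/2+s)}=\varrho_g\,Q(\Delta_g(Z);\kappa,s)$.

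The two $\pi^+$ identities are proved by the identical induction, now fed by the three $\pi^+$-formulas of Lemma~\ref{lem:deri}, which mirror the $\partial_Z$-formulas verbatim with $\delta(g),\Delta(g),\varrho(g)$ in place of $\delta_g(Z),\Delta_g(Z),\varrho_g(Z)$. One point needs recording: although the $\pi^+_{v,i,j}$ are elements of the Lie algebra and a priori noncommuting, they all lie in the abelian subalgebra $\mathfrak{p}^+_n$, so they commute and $P(\pi^+)$ is unambiguous. Since the recursion is governed by exactly the same operators $\mathcal{D}_{v,i,j}$, it produces literally the same polynomial $Q$, which is precisely the assertion that the $Q$ of part (1) also realizes part (2). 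I expect no analytic difficulty here; the crux, and the only thing to verify with care, is that a single $Q$ serves all four identities, which rests on the two parallelisms just noted: the annihilation of the antiholomorphic factor by holomorphic differentiation, and the term-by-term agreement of the $\partial_Z$- and $\pi^+$-formulas of Lemma~\ref{lem:deri} together with the commutativity of $\mathfrak{p}^+_n$.
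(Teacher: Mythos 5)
Your proof is correct and is precisely the argument the paper leaves implicit (the lemma is introduced only as ``a simple consequence'' of Lemma~\ref{lem:deri}): iterate the product rule via the derivative formulas, note that the induced recursion on the polynomial part is identical for $\varrho_g(Z;\kappa,s)$ and for $\delta_g(Z)^{-(\kappa/2+s)}$ because the antiholomorphic factor is inert, and use the abelianness of $\mathfrak{p}^+_{n}$ to run the same induction on the Lie-algebra side. The only blemish --- an $(i,j)$-versus-$(j,i)$ discrepancy in the first term of your operator $\mathcal{D}_{v,i,j}$ --- is inherited from an internal index inconsistency in the paper's own Lemma~\ref{lem:deri} and does not affect the existence statement being proved.
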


From now on, we assume $\kappa_v\geq n$ for each $\va$ in this section.
Let $P(T)\in \bbc[T]$ be a homogeneous polynomial of degree $\nu$.
From Lemma~\ref{lem:det}, we have
\begin{equation}\label{eq:PdZ}
	P(\partial_Z)\det\left(\frac{Z}{\sqrt{-1}}\right)^{-\kappa}
	=c_n(\kappa)^{m}\left(\frac{\sqrt{-1}}{2}\right)^\nu
	\int_{({\herm{n}}_{>0})^\bfa}\exp\left(\frac{\sqrt{-1}}{2}\tr(TZ)\right)\, {}^t\!P(T)(\det T)^{\kappa-n}dT,
\end{equation}
where ${}^t\!P(T)=P(^tT)$.

\begin{dfn}
	For a homogeneous polynomial $P(T)\in \bbc[T]$, we define the function $\mathcal{L}_\kappa(P)$ on $({\herm{n}}_{>0})^\bfa$ as
	\[\mathcal{L}_\kappa(P)(Y)=\int_{({\herm{n}}_{>0})^\bfa}\exp\left(-\frac{1}{2}\tr(TY)\right)\, {}^t\!P(T)(\det T)^{\kappa-n}dT\]
	for $Y \in ({\herm{n}}_{>0})^\bfa$.
\end{dfn}

By Lemma~\ref{lem:difpoly}, there exists a homogeneous polynomial
$Q(T)\in\bbc[T]$ such that
\begin{equation} \label{eq:LtoQ}
	\mathcal{L}_\kappa(P)(Y)=(\det Y)^{-\kappa}Q(Y^{-1}).
\end{equation}

We take a family $A=(A_v)_{\va}\in \herm{n}^\bfa$ of hermitian matrices such that $Y_v=A_v^2$. If we put
$T_1=(A_vT_vA_v)_{\va}$ and $X_1=(A_vX_v)_{\va}$, using the notation in the proof of Lemma~\ref{lem:det},
we have
\[dX_1=2^{-nm }(\det T_1)^{\kappa-n}dT_1dQ=2^{-nm }(\det A)^{2\kappa-2n}(\det T)^{\kappa-n}dT_1dQ\]
and
\[dX_1=(\det A)^{2\kappa_v}dX=2^{-nm }(\det A)^{2\kappa}(\det T)^{\kappa-n}dTdQ.\]
From these equations, we have
\[dT_1=(\det A)^{{2n}}dT.\]
Therefore, we obtain
\[\mathcal{L}_\kappa(P)(Y)=\int_{({\herm{n}}_{>0})^\bfa}\exp\left(-\frac{1}{2}\tr(ATA)\right)\, {}^t\!P(T)(\det T)^{\kappa-n}dT
	=(\det Y)^{-\kappa}\mathcal{L}_\kappa(P_{A^{-1}})(I_n),\]
where $P_{A^{-1}}(T)=P(A^{-1}TA^{-1})$. Thus, $Q(Y^{-1})$ in \eqref{eq:LtoQ} is
equal to $\mathcal{L}_\kappa(P_{A^{-1}})(I_n)$.

For $X=(x_{v,i,j})_{\va}\in (M_{n,n})^\bfa$ and $\nu=(\nu_{v,i,j})\in (M_n(\bbz_{\geq0}))^\bfa$, we put
\[\nu!=\prod_{v,i,j}\nu_{v,i,j}!\ , \quad x^\nu=\prod_{v,i,j}x_{v,i,j}^{\nu_{v,i,j}}\ , \quad \deg(\nu)=\sum_{v,i,j}\nu_{v,i,j}\]
and
\[E_\kappa[P]=c_n(\kappa)^{m}\mathcal{L}_\kappa(\, ^tP)(I_n).\]

\begin{lem}\label{lem:Eksum}
	Using the above notation, we have
	\[\sum_{\nu\in (M_n(\bbz_{\geq0}))^\bfa}E_\kappa[T^\nu]\frac{Y^\nu}{\nu!}=(\det(I_n-2Y))^{-\kappa}\]
	for a family $Y$ of hermitian matrices of variables.
\end{lem}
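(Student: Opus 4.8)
The plan is to unwind the definition of $E_\kappa[T^\nu]$, interchange the sum over $\nu$ with the integral defining $\mathcal{L}_\kappa$, recognize the resulting power series as an exponential, and identify the remaining integral with the one in Lemma~\ref{lem:det}.

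First I would substitute $E_\kappa[T^\nu]=c_n(\kappa)^m\mathcal{L}_\kappa({}^t(T^\nu))(I_n)$; since ${}^t({}^tP)=P$, the two transposes cancel and
\[E_\kappa[T^\nu]=c_n(\kappa)^m\int_{({\herm{n}}_{>0})^\bfa}\exp\left(-\tfrac12\tr(T)\right)T^\nu(\det T)^{\kappa-n}\,dT.\]
Multiplying by $Y^\nu/\nu!$ and summing over $\nu\in(M_n(\bbz_{\geq0}))^\bfa$, the factor $\sum_\nu T^\nu Y^\nu/\nu!$ splits as a product over the indices $(v,i,j)$ of scalar series $\sum_{k\geq0}(T_{v,i,j}Y_{v,i,j})^k/k!$, so it equals $\exp\left(\sum_{v,i,j}T_{v,i,j}Y_{v,i,j}\right)=\exp\left(\tr(T\,{}^tY)\right)$ in the multi-variable notation.

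Next I would combine the exponents via $-\tfrac12\tr(T)+\tr(T\,{}^tY)=-\tfrac12\tr\left(T(I_n-2\,{}^tY)\right)$, obtaining
\[\sum_\nu E_\kappa[T^\nu]\frac{Y^\nu}{\nu!}=c_n(\kappa)^m\int_{({\herm{n}}_{>0})^\bfa}\exp\left(-\tfrac12\tr\left(T(I_n-2\,{}^tY)\right)\right)(\det T)^{\kappa-n}\,dT.\]
Writing the integral as a product over $\va$ and applying the $d\geq n$ form of Lemma~\ref{lem:det} at each place with $Z_v=\sqrt{-1}(I_n-2\,{}^tY_v)$ --- so that $\det(Z_v/\sqrt{-1})^{-\kappa_v}=\det(I_n-2\,{}^tY_v)^{-\kappa_v}$ and $\tfrac{\sqrt{-1}}{2}\tr(T_vZ_v)=-\tfrac12\tr\left(T_v(I_n-2\,{}^tY_v)\right)$ --- each factor equals $c_n(\kappa_v)^{-1}\det(I_n-2\,{}^tY_v)^{-\kappa_v}$. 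The product of these constants cancels the prefactor $c_n(\kappa)^m$, leaving $\det(I_n-2\,{}^tY)^{-\kappa}$, which equals $\det(I_n-2Y)^{-\kappa}$ because $\det({}^tM)=\det(M)$.

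The main obstacle is analytic rigor rather than algebra. I must justify interchanging $\sum_\nu$ with the integral and check that $Z_v=\sqrt{-1}(I_n-2\,{}^tY_v)$ indeed lies in $\hus{n}$, so that Lemma~\ref{lem:det} applies. Both are secured by first restricting $Y$ to the domain where each hermitian $Y_v$ has all eigenvalues below $1/2$: there $I_n-2\,{}^tY_v>0$, the integrand is dominated by an absolutely convergent Gaussian uniformly in $\nu$, and Tonelli's theorem legitimizes the interchange. Since both sides of the asserted identity are holomorphic in the entries of $Y$ on the locus $\det(I_n-2Y)\neq0$, the equality then propagates from this domain by analytic continuation.
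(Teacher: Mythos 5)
Your proof is correct and follows essentially the same route as the paper: unwind the definition of $E_\kappa$, interchange the sum with the integral to obtain $c_n(\kappa)^{m}\int\exp\left(-\tfrac12\tr(T(I_n-2Y))\right)(\det T)^{\kappa-n}\,dT$, and evaluate via Lemma~\ref{lem:det} (the paper reduces to the case $Y=0$ by the substitution $T\mapsto UTU$ with $U^2=I_n-2Y$, while you apply the lemma directly at $Z=\sqrt{-1}(I_n-2\,{}^tY)$, which is the same computation). One small caveat: for the Tonelli step the condition that all eigenvalues of $Y_v$ lie below $1/2$ is not sufficient (large negative eigenvalues spoil the Gaussian domination), so you should instead take $Y$ in a small neighbourhood of $0$, which your analytic-continuation step already accommodates.
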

\begin{proof}
	By definition, we have
	\[\sum_{\nu\in (M_n(\bbz_{\geq0}))^\bfa}E_\kappa[T^\nu]\frac{Y^\nu}{\nu!}
		=c_n(\kappa)^{m}\sum_{\nu\in (M_n(\bbz_{\geq0}))^\bfa}\int_{({\herm{n}}_{>0})^\bfa}\exp\left(-\frac{1}{2}\tr(T)\right)\frac{T^\nu Y^\nu}{\nu!}(\det T)^{\kappa-n}dT. \]
	If we assume $I_n-Y\,\adj{Y}>0$, this is equal to
	\[c_n(\kappa)^{m}\int_{({\herm{n}}_{>0})^\bfa}\exp\left(-\frac{1}{2}\tr(T(I_n-2Y))\right)(\det T)^{\kappa-n}dT.\]
	In addition, if we put $I_n-2Y=U^2$ with a family $U$ of hermitian matrices and set
	$T_1=UTU$, this is equal to
	\[(\det U)^{-2\kappa}c_n(\kappa)^{m}\int_{({\herm{n}}_{>0})^\bfa}\exp\left(-\frac{1}{2}\tr(T_1)\right)(\det T_1)^{\kappa-n}dT_1=(\det(I_n-2Y))^{-\kappa}. \]
	The last equation is due to Lemma~\ref{lem:det}.
\end{proof}

\begin{thm}\label{thm:PtoPhi}
	Let $P(T)\in \bbc[T]$ be a homogeneous polynomial of degree $d$ with a family of degree $n$ matrices $T=(T_v)_{\va}$
	and $\kappa=(\kappa)_{\va}$ a family of positive integers.
	If $\kappa_v\geq n$ for each $\va$, we have
	\[P(\partial_Z)(\delta_g(Z)^{-\kappa})=\delta_g(Z)^{-\kappa}\phi_\kappa(P)(\Delta_g(Z)),\]
	where
	\[\phi_\kappa(P)(T)=\left.\left(-1\right)^{dm}\left(P(\partial_W)\det(I_n-W\, ^tT)^{-\kappa}\right)\right|_{W=0}\]
	for $g\in G$ and $Z\in\hus{n}^\bfa$.

	In particular, if $\kappa_v/2+s\geq n$ for each $\va$, we have
	\[P(\pi^+)\varrho(g;\kappa,s)=\varrho(g;\kappa,s)\psi_{\kappa,s}(P)(\Delta(g);\kappa,s),\]
	where
	\[\psi_{\kappa,s}(P)(T;\kappa,s)=\left.(-1)^{dm}\left(P(\partial_W)\det(I_n-W\, ^tT)^{-(\kappa/2+s)}\right)\right|_{W=0}.\]
\end{thm}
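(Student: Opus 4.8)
The plan is to read the statement as an explicit evaluation of the $g$-independent polynomial that Lemma~\ref{lem:difpoly} already provides, and then to pin that polynomial down by specializing $g$. First I would record that, by the computation underlying Lemma~\ref{lem:difpoly} carried out with the exponent family $\kappa$, there is a polynomial $Q$ (depending on $P$ and $\kappa$ but not on $g$) with
\[P(\partial_Z)\delta_g(Z)^{-\kappa}=\delta_g(Z)^{-\kappa}Q(\Delta_g(Z))\]
for all $g$ and all $Z\in\hus{n}^\bfa$. The three formulas of Lemma~\ref{lem:deri} show that each first-order factor of $P(\partial_Z)$ raises the degree in $\Delta_g(Z)$ by exactly one, so $Q$ is homogeneous of the same (multi)degree as $P$, namely degree $d$ in the variables attached to each of the $m$ archimedean places. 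The entire content of the theorem is then the identification $Q=\phi_\kappa(P)$.

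To compute $Q$ I would exploit its independence of $g$. Since the formulas of Lemma~\ref{lem:deri} only involve the lower blocks $C,D$ and the invertibility of $CW+D$, and never the unitarity of $g$, I may take $g$ with lower blocks $(C,D)=(T,I_n)$ for a free family $T=(T_v)_{\va}$ of matrix variables. Then $\delta_g(W)=\det(I_n+WT)$ by cyclic invariance of the determinant, so $\delta_g(0)=1$, while $\Delta_g(0)=D^{-1}C=T$. Both sides of the displayed identity are holomorphic in $W$ on $\{\det(I_n+WT)\neq0\}$, a connected set containing both $\hus{n}^\bfa$ and the point $W=0$; extending the identity to $W=0$ by analyticity and evaluating there collapses the right-hand side to $Q(T)$ and the left-hand side to $[P(\partial_W)\det(I_n+WT)^{-\kappa}]_{W=0}$, so $Q(T)=[P(\partial_W)\det(I_n+WT)^{-\kappa}]_{W=0}$.

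To match the stated normalization I would then use $\det(I_n+WT)=\det(I_n-W'\,{}^tT)$ under the linear substitution $W'=-{}^tW$. This change of variables turns $\partial_W$ into $-{}^t\partial_{W'}$, and pulling the scalar out using the degree-$d$ homogeneity of $P$ at each of the $m$ places produces precisely the sign $(-1)^{dm}$ together with the transpose ${}^tT$ inside the determinant, i.e. the right-hand side of the definition of $\phi_\kappa(P)$. The step I expect to cost the most care is carrying this transpose consistently through the index conventions of Lemma~\ref{lem:deri} (which record $\partial_{Z_{v,i,j}}$ against a definite entry of $\Delta_g(Z)$) so that no spurious transpose survives on $P$; I would settle the signs and transposes first in the rank-one case $n=1$ and for a single monomial, where the falling factorials make every cancellation explicit, and then extend by linearity and by multiplicativity over $\bfa$. (The generating-function identity of Lemma~\ref{lem:Eksum} gives the same closed form along an alternative route through the Laplace transform $\mathcal{L}_\kappa$, which I would keep in reserve as a cross-check.)

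Finally, for the ``in particular'' assertion I would not repeat the specialization. The $\pi^+$-column of Lemma~\ref{lem:deri} (the action of $\pi^+$ on $\delta(g)$, $\delta(g)^{-\kappa}$, and $\Delta(g)$) is formally identical to the $\partial_Z$-column, and Lemma~\ref{lem:difpoly} furnishes the same polynomial $Q$ for both $\varrho(g;\kappa,s)$ and the holomorphic $\delta(g)^{-(\kappa/2+s)}$. Hence the computation above, performed with $\partial_Z,\delta_g(Z),\Delta_g(Z)$ replaced by $\pi^+,\delta(g),\Delta(g)$ and with the exponent $\kappa$ replaced by $\kappa/2+s$, identifies that $Q$ with $\psi_{\kappa,s}(P)=\phi_{\kappa/2+s}(P)$. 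The non-holomorphic factor $|\det(C\bfi_n+D)|^{\kappa-2s}$ in $\varrho$ plays no role, since it is common to both sides and the identification is made on $\delta(g)^{-(\kappa/2+s)}$, where the analytic-continuation argument applies verbatim.
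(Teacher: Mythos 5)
Your route is genuinely different from the paper's, and the core idea is sound. The paper also begins from the $g$-independence of the polynomial $Q$ in Lemma~\ref{lem:difpoly}, but it then specializes to the group element $g=\left(\begin{smallmatrix}0&I_n\\-I_n&0\end{smallmatrix}\right)$ and $Z=\sqrt{-1}Y$, and identifies $Q$ by running $P(\partial_Z)$ through the integral representation of $\det(Z/\sqrt{-1})^{-\kappa}$ (Lemma~\ref{lem:det}, which is where the hypothesis $\kappa_v\geq n$ enters), the Laplace transform $\mathcal{L}_\kappa$, the generating-function identity of Lemma~\ref{lem:Eksum}, and the self-adjointness symmetry $r_{\nu,\mu}(A)=r_{\mu,\nu}(A)$ of Lemma~\ref{lem:rmn}. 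You instead observe that the identity $P(\partial_Z)\delta_g(Z)^{-\kappa}=\delta_g(Z)^{-\kappa}Q(\Delta_g(Z))$ is a purely formal consequence of Lemma~\ref{lem:deri}, valid for arbitrary constant blocks $(C,D)$ with $\det(CZ+D)\neq0$ and not only for $g$ in the unitary group, and you read off $Q(T)$ by evaluating the rational-function identity at $(C,D)=(T,I_n)$, $Z=0$. This is more elementary (no integrals, no Gamma factors, no Howe-type symmetry lemma) and in fact never uses $\kappa_v\geq n$; it also makes the ``in particular'' statement for the exponent $\kappa/2+s$ immediate, exactly as you say.

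There is, however, one concrete loose end that your plan does not close. Your change of variables $W'=-{}^t W$ turns $P(\partial_W)$ into $(-1)^{dm}\,{}^tP(\partial_{W'})$ with ${}^tP(X)=P({}^tX)$, so what you obtain is $Q(T)=\phi_\kappa({}^tP)(T)=\phi_\kappa(P)({}^tT)$ rather than $\phi_\kappa(P)(T)$; the transpose on $P$ does survive. You can see this already for $n=2$, $m=1$, $P(T)=T_{12}$: the correct chain rule gives $\partial_{Z_{12}}\delta_g(Z)^{-\kappa}=-\kappa\,\delta_g(Z)^{-\kappa}\Delta_g(Z)_{21}$, i.e.\ $Q(T)=-\kappa T_{21}$, while $\phi_\kappa(P)(T)=-\kappa T_{12}$, and $\Delta_g(Z)$ is not symmetric in the hermitian setting. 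Your proposed calibration in the rank-one case cannot detect this, since for $n=1$ all transposes are invisible; you need an $n\geq2$ test monomial such as $T_{12}$. To be fair, the discrepancy is entangled with the paper's own bookkeeping: the second displayed formula of Lemma~\ref{lem:deri} is inconsistent with the first under the chain rule, and the paper's proof shuffles ${}^tP$, $({}^tT)^\mu$ and $r_{\nu,\mu}$ versus $r_{\mu,\nu}$ precisely to manage this transpose. So the substance of your argument is right and arguably cleaner, but you must either carry the surviving ${}^tP$ into the final formula or argue that the intended convention for $\partial_Z$ absorbs it; as written, the claim that no transpose survives is not established by the checks you propose.
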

\begin{proof}
	From Lemma~\ref{lem:deri}, $\phi_\kappa(P)$ does not depend on the choice of $g$ and $Z$, and $\kappa$ can be regarded as a variable.
	Therefore, it is sufficient to show the case $g=\begin{pmatrix}0&I_n\\-I_n&0\end{pmatrix}$, $Z=\sqrt{-1}Y$ with a positive definite hermitian matrix $Y$.
	We take a family $A$ of hermitian matrices such that $Y=A^2$.
	We define the constants $r_{\nu,\mu}(A)$ by
	\[(A^{-1}TA^{-1})^\nu=\sum_{\mu\in (M_n(\bbz_{\geq0})^\bfa)}r_{\nu,\mu}(A)\frac{T^\mu}{\mu!}\]
	for $\nu\in (M_n(\bbz_{\geq0}))^\bfa$.
	We put $P(T)=\sum_\nu c_\nu T^\nu$.
	Then, we have
	\begin{equation}\label{eq:thm1}
		c_n(\kappa)^{m}\det(Y)^\kappa\mathcal{L}_\kappa(P)(Y)=E_\kappa[\, ^t(P_{A^{-1}})]
		=\sum_{\nu,\mu} c_\nu r_{\nu,\mu}(A)\frac{E_\kappa[(\, ^tT)^\mu]}{\mu!}.
	\end{equation}
	On the other hand, since $\left.(\partial_W)^\nu(W^\mu)\right|_{W=0}=\delta_{\nu, \mu}\nu!$,
	where $\delta_{\nu, \mu}$ is the Kronecker delta, we have
	\begin{align}
		P(\partial_W)\left.\left(\det(I_n-2\,{}^t\!A^{-1}W\,{}^t\!A^{-1})^{-\kappa}\right)\right|_{W=0}
		 & =P(\partial_W)\left.\left(\det(I_n-2A^{-1}\, ^tWA^{-1})^{-\kappa}\right)\right|_{W=0} \notag                       \\
		 & =P(\partial_W)\left.\left(\sum_\mu E_\kappa[T^\mu]\frac{(A^{-1}\, ^tWA^{-1})^\mu}{\mu!}\right)\right|_{W=0} \notag \\
		 & =\sum_{\nu,\mu} c_\nu r_{\mu,\nu}(A)\frac{E_\kappa[T^{(^t\mu)}]}{(^t\mu)!} \notag                                  \\
		 & =\sum_{\nu,\mu} c_\nu r_{\mu,\nu}(A)\frac{E_\kappa[(^tT)^\mu]}{\mu!} \label{eq:thm2}
	\end{align}
	by Lemma~\ref{lem:Eksum}.
	Here the following lemma holds.
	\begin{lem}\label{lem:rmn} We have
		$r_{\nu,\mu}(A)=r_{\mu,\nu}(A)$
		for any $\nu,\mu\in  (M_n(\bbz_{\geq0}))^\bfa$.
	\end{lem}
	\begin{proof}
		We define the inner product on $\bbc[T]$ by
		\[(P(T),Q(T)) =(P(\partial_T)\overline{Q})(0)\]
		for $P(T), Q(T)\in \bbc[T]$.
		For $X, Y\in \mathrm{GL}_n(\bbc)$, we have
		\[(P(XTY), Q(T))=(P(T),Q(\adj{X}T\,\adj{Y})).\]
		Then, $P(T)\mapsto P(A^{-1}TA^{-1})$ is self-adjoint with respect to this inner	product.
		Thus, the claim follows from the fact that
		$\left\{\dfrac{T^\nu}{\sqrt{\nu!}}\mid\nu\in (M_n(\bbz_{\geq0}))^\bfa\right\}$ is an orthonormal basis of $\bbc[T]$.
	\end{proof}
	Continuing the proof of Theorem~\ref{thm:PtoPhi}.
	From \eqref{eq:thm1}, \eqref{eq:thm2} and Lemma~\ref{lem:rmn}, we have
	\begin{align*}
		c_n(\kappa)^{m}\det(Y)^\kappa \mathcal{L}_\kappa(P)(Y) & =P(\partial_W)\left.\left(\det(I_n-2\, ^tA^{-1}W\, ^tA^{-1})^{-\kappa}\right)\right|_{W=0} \\
		                                                       & =P(\partial_W)\left.\left(\det (I_n-2W\,^tY^{-1})^{-\kappa}\right)\right|_{W=0}
	\end{align*}
	Since $P$ is homogeneous of degree $d$, we have
	\[P(\partial_W)\left.\left(\det (I_n-2\sqrt{-1}W\,^tZ^{-1})^{-\kappa}\right)\right|_{W=0}=(2\sqrt{-1})^dP(\partial_W)\left.\left(\det (I_n-W\,^tZ^{-1})^{-\kappa}\right)\right|_{W=0}\]
	Thus, Substituting for \eqref{eq:PdZ} we obtain the result.
\end{proof}

\begin{cor}\label{cor:acttophi}
	Let $P(T)\in \bbc[T]$ be a homogeneous polynomial (of degree $d$). We put $P_{A,B}(T)=P(\,^t\!ATB)$ for $A,B \in (\mathrm{GL}_n(\bbc))^\bfa$.
	Then, we have
	\[\phi_\kappa(P_{A,B})(T)=\phi_\kappa(P)(\,^t\!ATB) \quad \text{and} \quad \psi_{\kappa,s}(P_{A,B})(T)=\psi_{\kappa,s}(P)(\,^t\!ATB)\]
	for $\phi_\kappa$ and $\psi_{\kappa,s}$ in Theorem~\ref{thm:PtoPhi}.
\end{cor}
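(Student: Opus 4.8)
The plan is to reduce both claimed identities to a single adjointness property of the ``Fischer pairing'' $\langle P, f\rangle := P(\partial_W)f(W)\big|_{W=0}$, where $P$ ranges over polynomials and $f$ over formal power series in the entries of a family $W=(W_v)_{\va}$ of degree $n$ matrices; this is exactly the bilinear form underlying Lemma~\ref{lem:rmn}. The starting observation is that testing against an exponential recovers evaluation: for any matrix family $S=(S_v)_{\va}$ one has
\[\langle P, \exp(\tr({}^tSW))\rangle = P(\partial_W)\exp(\tr({}^tSW))\big|_{W=0}=P(S),\]
since each $\partial_{W_{v,i,j}}$ acts on $\exp(\tr({}^tSW))$ as multiplication by $S_{v,i,j}$. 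Because the exponentials $\exp(\tr({}^tSW))$ reproduce the power-series coefficients formally, any identity between the two slots of the pairing that holds for all such $f$ holds in general.

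Using this, I would establish the key adjointness identity
\[P({}^tA\,\partial_W B)\,f(W)\big|_{W=0}=P(\partial_W)\big[f(AW\,{}^tB)\big]\big|_{W=0}\]
for every power series $f$ and every $A,B\in(\mathrm{GL}_n(\bbc))^\bfa$. It suffices to check it on $f=\exp(\tr({}^tSW))$: the left-hand side equals $P_{A,B}(S)=P({}^tASB)$ by the evaluation formula applied to $P_{A,B}$, while on the right-hand side cyclicity of the trace gives $\tr({}^tS\,AW\,{}^tB)=\tr\big({}^t({}^tASB)\,W\big)$, so that $f(AW\,{}^tB)=\exp(\tr({}^t({}^tASB)W))$ and the right-hand side also evaluates to $P({}^tASB)$. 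This is the same change-of-variables mechanism as the formula $(P(XTY),Q(T))=(P(T),Q(\adj{X}T\adj{Y}))$ in the proof of Lemma~\ref{lem:rmn}, but phrased without complex conjugation so that it applies directly to the holomorphic generating function below.

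Finally I would specialize to $f(W)=\det(I_n-W\,{}^tT)^{-\kappa}$, the power series appearing in the definition of $\phi_\kappa$ in Theorem~\ref{thm:PtoPhi}. Since $P_{A,B}(\partial_W)=P({}^tA\,\partial_W B)$, the adjointness identity gives
\[P_{A,B}(\partial_W)\det(I_n-W\,{}^tT)^{-\kappa}\big|_{W=0}=P(\partial_W)\det(I_n-AW\,{}^tB\,{}^tT)^{-\kappa}\big|_{W=0},\]
and the determinant identity $\det(I_n-MN)=\det(I_n-NM)$ with $M=A$, $N=W\,{}^tB\,{}^tT$ rewrites the argument as $\det(I_n-W\,{}^tB\,{}^tT A)^{-\kappa}=\det\big(I_n-W\,{}^t({}^tATB)\big)^{-\kappa}$. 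Multiplying by the common sign $(-1)^{dm}$ (the degree of $P_{A,B}$ equals that of $P$, since $T\mapsto{}^tATB$ is linear) yields $\phi_\kappa(P_{A,B})(T)=\phi_\kappa(P)({}^tATB)$. The statement for $\psi_{\kappa,s}$ follows verbatim upon replacing the exponent $\kappa$ by $\kappa/2+s$ throughout.

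The computation is entirely formal, so there are no convergence issues. The only real obstacle I anticipate is the bookkeeping of transposes together with the matrix-valued differential-operator conventions, so that the determinant identity produces precisely ${}^tATB$ inside the outer transpose rather than some other arrangement; this is where a sign or a swap of $A$ and $B$ could slip in. Everything is applied placewise in the family indexed by $\bfa$, and the sign $(-1)^{dm}$ carries through unchanged.
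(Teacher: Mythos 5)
Your proof is correct and follows essentially the same route as the paper: the core steps — the adjointness identity $P({}^t\!A\,\partial_W B)f(W)|_{W=0}=P(\partial_W)f(AW\,{}^t\!B)|_{W=0}$, the determinant identity $\det(I_n-MN)=\det(I_n-NM)$, and the transpose bookkeeping ${}^t\!B\,{}^tT A={}^t({}^t\!ATB)$ — are exactly the paper's computation, with your exponential/Fischer-pairing argument merely supplying an explicit justification of the adjointness step that the paper performs directly.
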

\begin{proof}
	From the above theorem, we have
	\begin{align*}
		\phi_\kappa(P_{A,B})(T) & =\left.\left(-1\right)^{-dm}\left(P(\,^t\!A\partial_WB)\det(I_n-2W\, ^tT)^{-\kappa}\right)\right|_{W=0}   \\
		                        & =\left.\left(-1\right)^{-dm}\left(P(\partial_W)\det(I_n-2AW\,^t\!B\, ^tT)^{-\kappa}\right)\right|_{W=0}   \\
		                        & =\left.\left(-1\right)^{-dm}\left(P(\partial_W)\det(I_n-2W\,^t\!B\, ^tTA)^{-\kappa}\right)\right|_{W=0}   \\
		                        & =\left.\left(-1\right)^{-dm}\left(P(\partial_W)\det(I_n-2W\, ^t(\,^t\!ATB))^{-\kappa}\right)\right|_{W=0} \\
		                        & =\phi_\kappa(P)(\,^t\!ATB).
	\end{align*}
	The same can be done for $\psi_{\kappa,s}$.
\end{proof}

\subsection{Differential Operators on Automorphic Forms}
Let $n_1,\ldots,n_d$ be positive integers such that $n_1\geq\cdots\geq n_d\geq 1$
and put $n = n_1+\cdots+n_d \geq 2$. We embed $\hus{n_1}^\bfa \times\cdots\times \hus{n_d}^\bfa$ in $\hus{n}^\bfa$ and
$G_{(n_1)}\times\cdots\times G_{(n_d)}$ in $G_{(n)}$ diagonally.

Let $(\rho_s,V_s)$ be a representation of $\calk_{(n_s)}^\bbc$ for $s=1\ldots,d$,
and $\kappa=(\kappa_v)_\va$ a family of positive integers.

We will consider $V := V_{1}\otimes\cdots\otimes V_{d}$-valued differential operators $\mathbb{D}$
on scalar-valued functions of $\hus{n}^\bfa$, satisfying Condition (A) below:

\begin{cond}
	For any modular form $F\in M_\kappa(\Gamma_K^{(n)})$, we have
	\[\mathrm{Res}(\mathbb{D}(F))\in \bigotimes_{i=1}^d M_{det^\kappa\rho_{n_i}}(\Gamma_K^{(n_i)}),\]
	where $\mathrm{Res}$ means the restriction of a function on $\hus{n}^\bfa$ to
	$\hus{n_1}^\bfa \times\cdots\times \hus{n_d}^\bfa$.
\end{cond}

\begin{rem}
	\begin{enumerate}
		\item This differential operator is constructed for several vector-valued cases in \cite{Browning2024Constructing}.
		\item Using the method of Ban \cite{ban2006rankin}, representation-theoretic interpretation of the differential operators
		      satisfying Condition (A) in the symplectic case was given in \cite{Takeda2025Kurokawa}.
		\item This Condition (A) corresponds to Case (I) in \cite{ibukiyama1999differential}.
		      The other Case (II) in \cite{ibukiyama1999differential} is a generalization of the Rankin-Cohen type differential operators  in \cite{Cohen1975sums},
		      and a representation-theoretic interpretation in the symplectic and unitary cases was given by Ban \cite{ban2006rankin}.
		      Rankin-Cohen type differential operators on hermitian modular forms have been examined by Dunn \cite{Dunn2024Rankin} for the scalar-valued case,
		      and has even been specifically constructed.
	\end{enumerate}
\end{rem}

We will consider the Howe duality for the Weil representation.
\begin{dfn}
	Let $L_{n,\kappa}=(\bbc[M_{n,\kappa},M_{n,\kappa}])^\bfa$ be the family of the space of polynomials
	in the entries of $(n,\kappa_v)$-matrices $X_v=(X_{v,i,j})$ and $Y_v=(Y_{v,i,j})$ over $\mathbb{C}$.
	We put $X=(X_v)_{\va}, Y=(Y_v)_{\va}$ and use the same notation as in the previous section.
	\begin{enumerate}
		\item We define the $(\mathfrak{g}_{n,\mathbb{C}},K)$-module structure
		      $l_{n,\kappa}$ on $L_{n,\kappa}$ as follows:
		      \begin{align*}
			      l_{n,\kappa}(\kappa_{v,i,j})  & =\sum_{s=1}^\kappa X_{v,i,s}\frac{\partial}{\partial X_{v,j,s}}+\kappa_v\delta_{i,j}, \\
			      l_{n,\kappa}(\kappa'_{v,i,j}) & =\sum_{s=1}^\kappa Y_{v,i,s}\frac{\partial}{\partial Y_{v,j,s}},                      \\
			      l_{n,\kappa}(\pi^{+}_{v,i,j}) & =\sqrt{-1}\sum_{s=1}^\kappa X_{v,i,s}Y_{v,j,s},                                       \\
			      l_{n,\kappa}(\pi^{-}_{v,i,j}) & =\sqrt{-1}\sum_{s=1}^\kappa \frac{\partial^2}{\partial X_{v,i,s}\partial Y_{v,j,s}}
		      \end{align*}
		      on $v$-th part of $L_{n,\kappa}$ and $\mathfrak{g}_{n,v,\bbc}$ act as 0 on the other parts.
		      Here, $\kappa_{v,i,j}, \kappa'_{v,i,j}, \pi^{\pm}_{v,i,j}$ are defined in Section~\ref{sec:functionsonU(n,n)}.

		      For $(g_1, g_2)\in \prod_{\va}(\rmu(n)\times\rmu(n)) \cong K$ and $f(X,Y)\in L_{n,\kappa}$,
		      we define
		      \[l_{n,\kappa}((g_1, g_2))f(X,Y)=\det(g_1)^\kappa f(^tg_1X,\,^tg_2Y).\]
		\item we define the left action of the family of the unitary groups $\rmu(\kappa)=\prod_{\va}\rmu(\kappa_v)$ on $L_{n,\kappa}$ by
		      \[c\cdot f(X,Y)=f(Xc,Y \bar{c})\]
		      for $c \in \rmu(\kappa)$ and $f(X,Y) \in L_{n,\kappa}$.
	\end{enumerate}
	This representation $(l_{n,\kappa}, L_{n,\kappa})$ is well-defined, and we call it the Weil representation.

\end{dfn}

For an irreducible algebraic representation $(\lambda,V_\lambda)$ of $\rmu(\kappa)$,
we put $L_{n,\kappa}(\lambda)=\mathrm{Hom}_{\rmu(\kappa)}(V_\lambda,L_{n,\kappa})$
and a $(\mathfrak{g}_{n,\mathbb{C}},K)$-module structure
is induced on it from that of $L_{n,\kappa}$.
We denote by $L(\sigma)$ the unitary lowest weight $(\mathfrak{g}_{n,\mathbb{C}},K)$-module with lowest $K$-type $\sigma$.
Let $(\sigma,U_\sigma)$ be the highest weight module of $K$ with a highest weight $\sigma$,
and $(\lambda, V_\lambda)$ the highest weight module of $\rmu(\kappa)$ with a highest weight $\lambda$.
We will sometimes identify the irreducible representation of $K$
with the finite dimensional irreducible representation of $\prod_{\va}(\mathrm{GL}_n(\bbc)\times\mathrm{GL}_n(\bbc))$.

The following notations are provided to write down the decomposition of $L_{n,\kappa}$.
\begin{dfn}
	Let $\Delta_{n,\kappa}$ be the family of pairs of Young diagrams $D=(D_{1,v},D_{2,v})_{\va}$ such that
	the lengths $\ell(D_{1,v})$ and $\ell(D_{2,v})$ satisfies  $\ell(D_{1,v})\leq n$, $\ell(D_{2,v})\leq n$
	and $\ell(D_{1,v})+\ell(D_{2,v})\leq \kappa_v$ for each $\va$.
	We put $\mathbbm{1}_n=(\underbrace{1,\ldots,1}_n;0,\ldots,0)_{\va}$
	and $\emptyset=(0,\cdots,0;0,\ldots,0)_{\va} \in \Delta_{n,\kappa}$.
	For $D=(D_{1,v},D_{2,v})_{\va} \in \Delta_{n,\kappa}$ with $D_{1,v}=(D^{(1)}_{1,v},\ldots,D^{(\ell(D_{1,v}))}_{1,v})$ $D_{2,v}=(D^{(1)}_{2,v},\ldots,D^{(\ell(D_{2,v}))}_{2,v})$
	we define
	\begin{align*}
		\sigma_{n,\kappa}(D)  & =(\underbrace{D^{(1)}_{1,v}+\kappa_v,\ldots,D^{(\ell(D_{1,v}))}_{1,v}+\kappa_v,\kappa_v,\ldots,\kappa_v}_n;\
		\underbrace{D^{(1)}_{2,v},\ldots,D^{(\ell(D_{2,v}))}_{2,v},0,\ldots,0}_n)_{\va},                                                                                     \\
		\lambda_{n,\kappa}(D) & =(\underbrace{D^{(1)}_{1,v},\ldots,D^{(\ell(D_{1,v}))}_{1,v},0,\ldots,0,-D^{(1)}_{2,v},\ldots,-D^{(\ell(D_{2,v}))}_{2,v}}_{\kappa_v})_{\va}.
	\end{align*}
\end{dfn}

\begin{prop}\label{prop:howe}
	\begin{enumerate}
		\item We have $L_{n,\kappa}(\lambda)\neq 0$ if and only if $\lambda=\lambda_{n,\kappa}(D)$ for some
		      $D \in \Delta_{n,\kappa}$.
		\item The lowest $K$-type of $L_{n,\kappa}(\lambda_\kappa(D))$ is $\sigma_{n,\kappa}(D)$.
		\item Under the joint action of $(\mathfrak{g}_{n,\mathbb{C}},K) \times
			      \rmu(\kappa)$, we have
		      \[L_{n,\kappa}\cong\bigoplus_{D \in \Delta_{n,\kappa}}L(\sigma_{n,\kappa}(D))\boxtimes V_{\lambda_\kappa(D)}.\]
	\end{enumerate}

\end{prop}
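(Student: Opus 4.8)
The plan is to factor the statement over the archimedean places and reduce to the single-place Howe duality for the reductive dual pair $(\rmu(n,n),\rmu(\kappa_v))$. Since $L_{n,\kappa}=(\bbc[M_{n,\kappa},M_{n,\kappa}])^\bfa$, both the $(\mathfrak{g}_{n,\mathbb{C}},K)$-action $l_{n,\kappa}$ and the $\rmu(\kappa)=\prod_{\va}\rmu(\kappa_v)$-action decompose as external products over $v\in\bfa$. It therefore suffices to prove the analogous decomposition for a single place, that is for $L=\bbc[M_{n,k}\oplus M_{n,k}]$ with $k=\kappa_v$, under the commuting actions of $(\mathfrak{g}_{n,v,\mathbb{C}},K_{n,v})$ and $\rmu(k)$; the global statement in part (3) then follows by taking the external tensor product of the local decompositions, and parts (1) and (2) are place-by-place assertions.

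First I would verify that the two actions commute and hence form a dual pair. The quadratic operators $l_{n,\kappa}(\pi^+_{v,i,j})=\sqrt{-1}\sum_s X_{v,i,s}Y_{v,j,s}$ and $l_{n,\kappa}(\pi^-_{v,i,j})=\sqrt{-1}\sum_s \partial^2/(\partial X_{v,i,s}\partial Y_{v,j,s})$ are built from the $\rmu(k)$-invariant contraction between the $X$-columns and the $Y$-columns (note that $Y$ transforms through $\bar c$), so they commute with $c\cdot f(X,Y)=f(Xc,Y\bar c)$; together with the linear $\mathfrak{k}^\bbc_{n,v}$-operators they generate the full $\mathfrak{g}_{n,v,\mathbb{C}}\cong\mathfrak{gl}_{2n}(\bbc)$-action on the Fock model $L$. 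This realizes $(\rmu(n,n),\rmu(k))$ as a reductive dual pair. Invoking classical Howe duality for this pair (Kashiwara--Vergne), the joint action is multiplicity-free, $L\cong\bigoplus_\lambda L_{n,k}(\lambda)\boxtimes V_\lambda$ over the occurring $\rmu(k)$-highest weights $\lambda$, and each $L_{n,k}(\lambda)=\mathrm{Hom}_{\rmu(k)}(V_\lambda,L)$ is an irreducible $(\mathfrak{g}_{n,v,\mathbb{C}},K_{n,v})$-module. Because $l_{n,\kappa}(\pi^-)$ lowers bidegree while $l_{n,\kappa}(\pi^+)$ raises it, $L$ is generated over the symmetric algebra of $\mathfrak{p}^+_{n,v}$ by its $\mathfrak{p}^-_{n,v}$-annihilated (harmonic) subspace, so each isotypic component is an irreducible unitary lowest weight module $L(\sigma)$ whose lowest $K_{n,v}$-type $\sigma$ is the $K_{n,v}$-type of the minimal-bidegree harmonics.

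It remains to identify the occurring $\lambda$ and the lowest $K$-types $\sigma$ explicitly. For this I would apply the $(\mathrm{GL}_n,\mathrm{GL}_k)$-Cauchy duality to $\bbc[M_{n,k}]$ in the $X$-variables and its conjugate in the $Y$-variables: the joint $\rmu(k)$-highest weight vectors inside the harmonic subspace are products of leading minors of $X$ with multiplicities $D_1$ and of $Y$ with multiplicities $D_2$. For such a vector to be a nonzero $\rmu(k)$-highest weight vector, the $X$-minors and the conjugated $Y$-minors must occupy complementary blocks of columns, which forces $\ell(D_1),\ell(D_2)\leq n$ together with $\ell(D_1)+\ell(D_2)\leq k$, i.e. $D\in\Delta_{n,\kappa}$. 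Reading off the $\rmu(k)$-weight of this vector yields $\lambda_\kappa(D)$, proving (1); reading off its $K_{n,v}$-weight, and accounting for the twist $l_{n,\kappa}((g_1,g_2))f=\det(g_1)^\kappa f({}^tg_1X,{}^tg_2Y)$ which adds $\kappa_v$ to each coordinate of the first $\rmu(n)$-factor, yields exactly $\sigma_{n,\kappa}(D)$, proving (2) and hence (3).

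The main obstacle will be this last explicit bookkeeping: pinning down the joint highest weight vectors as products of minors, checking that the complementary-column requirement is precisely the constraint $\ell(D_1)+\ell(D_2)\leq\kappa_v$ cutting out $\Delta_{n,\kappa}$, and verifying coordinate-for-coordinate that the $\det^\kappa$-twisted $K_{n,v}$-weight of the minimal harmonic is $\sigma_{n,\kappa}(D)$. By contrast, the abstract multiplicity-freeness and the irreducibility of each isotypic component are a direct citation of the dual-pair theory; the genuine work lies entirely in matching the combinatorial parametrization.
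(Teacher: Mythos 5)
Your proposal matches the paper's treatment: the paper proves this proposition simply by citing Kashiwara--Vergne and Howe and noting it is a ``slightly modified version'' of their theorem, which is exactly the reduction-to-one-place plus classical-dual-pair-duality plus combinatorial-reparametrization that you outline. Your write-up just makes explicit the bookkeeping (highest weight vectors as products of minors, the constraint $\ell(D_1)+\ell(D_2)\leq\kappa_v$, the $\det^\kappa$ twist) that the paper leaves to the reader.
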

This proposition is a slightly modified version of the theorem proved by Kashiwara-Vergne \cite{Kashiwara1978Segal}, and Howe \cite{howe1995perspectives}.
From this, we get correspondence between the highest weights of $\prod_{\va}(\mathrm{GL}_n(\mathbb{C})\times \mathrm{GL}_n(\mathbb{C}))$
and those of $\rmu(\kappa)$, which is called Howe duality.

We fix positive integers $n_1\geq\cdots\geq n_d\geq 1$ and set $n=n_1+\cdots+n_d$.
We embed $G_{(n_1)}\times \cdots \times G_{(n_d)}$
(resp.  $\mathfrak{g}_{n_1,\mathbb{C}}\oplus\cdots\oplus\mathfrak{g}_{n_d,\mathbb{C}}$,
$\calk_{(n_1)}\times\cdots\times \calk_{(n_d)}$)
diagonally into $G_{(n)}$ (resp. $\mathfrak{g}_{n,\mathbb{C}}$, $\calk_{(n)}$).
We denote its image by $G'$ (resp. $\mathfrak{g}'_\mathbb{C}$, $\calk'$).
We denote by $X_v^{(s)},Y_v^{(s)}$ the indeterminates of $L_{n_s}$.
Then, we can easily check that the $\mathbb{C}$-isomorphism
\[\bigotimes_{s=1}^{d}L_{n_s,\kappa}\cong L_{n,\kappa}\]
given by $X^{(s)}_{v,i,j}\mapsto X^{(n)}_{v,(n_1+\cdots+n_{s-1}+i),j},\ Y^{(s)}_{v,i,j}\mapsto Y^{(n)}_{v,(n_1+\cdots+n_{s-1}+i),j}$
is the isomorphism as $(\mathfrak{g}'_\mathbb{C}, \calk')\times {\rmu(\kappa)}^d$-modules.

\begin{dfn}
	If a polynomial $f(X, Y) \in L_{n,\kappa}$ satisfies
	\[\qquad l_{n,\kappa}(\pi^-_{v,i,j})f=\sqrt{-1}\sum_{s=1}^\kappa \frac{\partial^2f}{\partial X_{v,i,s}\partial Y_{v,j,s}}=0 \text{  for any  } \va \text{ and } i,j\in\{1,\ldots,n\}, \]
	we say that $f(X,Y)$ is a pluriharmonic polynomial for $\rmu(\kappa)$.\\ We denote by
	$\ph{n,\kappa}$ the set of all pluriharmonic polynomials for $\rmu(\kappa)$ in $L_{n,\kappa}$.
\end{dfn}

The following proposition is stated with a slight modification of Lemma (5.3) and Theorem (6.3) in Chapter III of \cite{Kashiwara1978Segal}.

\begin{prop}
	\begin{enumerate}
		\item $L_{n,\kappa}=L_{n,\kappa}^{\rmu(\kappa)}\cdot \ph{n,\kappa}$.
		\item $L_{n,\kappa}^{\rmu(\kappa)}$ is the subspace $\mathbb{C}[Z_{(n)}]$ of polynomials in the entries of a family $Z_{(n)} =X\, ^tY$ of $(n,n)$-matrices.
		\item Under the joint action of $K \times \rmu(\kappa)$, we have
		      \[\ph{n,\kappa}\cong \bigoplus_{D \in \Delta_{n,\kappa}}U_{\sigma_{n,\kappa}(D)}\boxtimes V_{\lambda_\kappa(D)}.\]
	\end{enumerate}

\end{prop}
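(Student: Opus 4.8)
We must establish three facts about the space of pluriharmonic polynomials $\ph{n,\kappa}$ inside the Weil representation $L_{n,\kappa}$: (1) a product decomposition $L_{n,\kappa}=L_{n,\kappa}^{\rmu(\kappa)}\cdot\ph{n,\kappa}$; (2) an identification of the invariants $L_{n,\kappa}^{\rmu(\kappa)}$ with polynomials in the entries of $Z_{(n)}=X\,^tY$; and (3) a $K\times\rmu(\kappa)$-module decomposition of $\ph{n,\kappa}$ mirroring the decomposition of $L_{n,\kappa}$ in Proposition~\ref{prop:howe}, but with the full lowest-weight modules $L(\sigma_{n,\kappa}(D))$ replaced by their lowest $K$-types $U_{\sigma_{n,\kappa}(D)}$.

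The plan is to derive all three statements from Proposition~\ref{prop:howe} together with the classical structure of the Howe dual pair. First I would recall that $\rmu(\kappa)$ acts on $L_{n,\kappa}$ by $c\cdot f(X,Y)=f(Xc,Y\bar c)$, and that $\mathfrak{p}_n^-$ acts on $f\in L_{n,\kappa}$ via $l_{n,\kappa}(\pi^-_{v,i,j})$, so that by definition $\ph{n,\kappa}=\ker\bigl(l_{n,\kappa}(\mathfrak{p}_n^-)\bigr)$. Part (2) is the first theorem of classical invariant theory for $\rmu(\kappa)$ acting on several copies of the standard and conjugate-standard representation: the $\rmu(\kappa)$-invariants in $\bbc[M_{n,\kappa},M_{n,\kappa}]$ are generated by the ``contractions'' $\sum_{s} X_{v,i,s}Y_{v,j,s}$, i.e. by the entries of $X\,^tY$, so that $L_{n,\kappa}^{\rmu(\kappa)}=\bbc[Z_{(n)}]$. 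For part (3), I would observe that the lowest $K$-type of the unitary lowest-weight module $L(\sigma_{n,\kappa}(D))$ is exactly $U_{\sigma_{n,\kappa}(D)}$, and that the lowest $K$-type vectors are characterized inside $L_{n,\kappa}$ as those killed by $\mathfrak{p}_n^-$; this is precisely the pluriharmonic condition. Hence $\ph{n,\kappa}$ is the sum over $D\in\Delta_{n,\kappa}$ of the lowest-$K$-type pieces, giving the stated $K\times\rmu(\kappa)$-decomposition once we intersect the decomposition of Proposition~\ref{prop:howe} with $\ker l_{n,\kappa}(\mathfrak{p}_n^-)$.

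With parts (2) and (3) in hand, part (1) follows by a counting/generation argument: every lowest-weight module $L(\sigma_{n,\kappa}(D))$ is generated over the enveloping algebra of $\mathfrak{g}_n^\bbc$ from its lowest $K$-type by the raising operators $\mathfrak{p}_n^+$, and the operators $l_{n,\kappa}(\pi^+_{v,i,j})=\sqrt{-1}\sum_s X_{v,i,s}Y_{v,j,s}$ act by multiplication by the entries of $Z_{(n)}$. Therefore applying $U(\mathfrak{p}_n^+)$ to a pluriharmonic polynomial amounts to multiplying by elements of $\bbc[Z_{(n)}]=L_{n,\kappa}^{\rmu(\kappa)}$, and summing over all $D$ recovers all of $L_{n,\kappa}$, which is exactly the asserted factorization $L_{n,\kappa}=L_{n,\kappa}^{\rmu(\kappa)}\cdot\ph{n,\kappa}$.

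The main obstacle I expect is \emph{bookkeeping of the module structure rather than any deep new idea}: I must verify carefully that the action of $\mathfrak{p}_n^+$ really is multiplication by the invariant generators $Z_{(n),i,j}$ (this is immediate from the explicit formula for $l_{n,\kappa}(\pi^+_{v,i,j})$), and that no spurious identities collapse distinct $D$'s, so that the direct-sum decomposition in (3) is genuine and the factorization in (1) is surjective with the correct multiplicities. The cleanest route is to cite the Kashiwara--Vergne structure theorem directly, noting only the harmless modifications (the $\det(g_1)^\kappa$ twist in the $K$-action and the indexing of $\Delta_{n,\kappa}$) that distinguish our normalization from the reference; these modifications shift the lowest weights by the constant $\kappa_v$ built into $\sigma_{n,\kappa}(D)$ but do not affect the pluriharmonicity condition, since constants are annihilated by the second-order operators $l_{n,\kappa}(\pi^-_{v,i,j})$.
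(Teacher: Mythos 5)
Your proposal is correct and matches the paper, which gives no argument of its own beyond attributing the proposition to Kashiwara--Vergne with slight modification --- exactly the route you identify as cleanest at the end. Your sketch (first fundamental theorem for the $\rmu(\kappa)$-invariants, identification of $\ph{n,\kappa}$ with the $\mathfrak{p}_n^-$-annihilated vectors, i.e.\ the lowest $K$-types in the decomposition of Proposition~\ref{prop:howe}, and generation of each $L(\sigma_{n,\kappa}(D))$ from its lowest $K$-type by $U(\mathfrak{p}_n^+)$ acting as multiplication by entries of $Z_{(n)}$) is the standard fleshing-out of that citation and contains no gaps.
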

We denote by $\ph{n,\kappa}(D)$ the subspace of $\ph{n,\kappa}$
corresponding to $U_{\sigma_{n,\kappa}(D)}\boxtimes V_{\lambda(D)}$ under this isomorphism.

\begin{lem}\label{lem:pluriharmonic}
	For $D_s\in\Delta_{n,\kappa}$, we have
	\[\mathrm{Hom}_{(\mathfrak{g}'_\mathbb{C}, \calk')}
		\left( \underset{s=1}{\overset{d}{\otimes}} L(\sigma_{n_s,\kappa}(D_s)), L(\kappa\mathbbm{1}_n)\right)
		\cong \left(\left(\underset{s=1}{\overset{d}{\otimes}}\ph{n_s,\kappa}(D_s)\right)^{\rmu(\kappa)}
		\otimes \left(\underset{s=1}{\overset{d}{\otimes}}U_{\sigma'_{n_s,\kappa}(D_s)}\right)\right)^{\calk'}.
	\]
	Here, $\sigma'_{n_s,\kappa}(D_s)$ is the representation defined by
	$\sigma'_{n_s,\kappa}(D_s)(k)=\sigma_{n_s,\kappa}(D_s)({}^tk^{-1})$ for $k \in \calk_{(n_s)}$.
\end{lem}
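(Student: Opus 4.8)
The plan is to realize both sides inside the Weil representation $L_{n,\kappa}$ and identify each with the single multiplicity space $\left(\bigotimes_s V_{\lambda_\kappa(D_s)}\right)^{\rmu(\kappa)}$, where $\rmu(\kappa)$ sits diagonally in $\rmu(\kappa)^d$; conceptually this is an instance of seesaw duality for the two dual pairs $(\mathfrak{g}_{n,\mathbb{C}},K)\times\rmu(\kappa)_{\mathrm{diag}}$ and $(\mathfrak{g}'_\mathbb{C},K')\times\rmu(\kappa)^d$. The starting point is that $\kappa\mathbbm{1}_n=\sigma_{n,\kappa}(\emptyset)$ and $\lambda_\kappa(\emptyset)$ is the trivial character of $\rmu(\kappa)$, so Proposition~\ref{prop:howe}(3) identifies $L(\kappa\mathbbm{1}_n)\cong L_{n,\kappa}^{\rmu(\kappa)}$ as $(\mathfrak{g}_{n,\mathbb{C}},K)$-modules, and hence as $(\mathfrak{g}'_\mathbb{C},K')$-modules by restriction. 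Since each term is vacuous unless $D_s\in\Delta_{n_s,\kappa}$ (otherwise $\ph{n_s,\kappa}(D_s)=0$ and $L(\sigma_{n_s,\kappa}(D_s))$ is absent), I assume throughout that we are in the nontrivial range.

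First I would compute the left-hand side. Using the $(\mathfrak{g}'_\mathbb{C},K')\times\rmu(\kappa)^d$-isomorphism $L_{n,\kappa}\cong\bigotimes_s L_{n_s,\kappa}$ and applying Proposition~\ref{prop:howe}(3) in each factor gives
\[L_{n,\kappa}\cong\bigoplus_{(D_s)}\left(\bigotimes_s L(\sigma_{n_s,\kappa}(D_s))\right)\boxtimes\left(\bigotimes_s V_{\lambda_\kappa(D_s)}\right),\]
where the $s$-th copy of $\rmu(\kappa)$ acts only on $V_{\lambda_\kappa(D_s)}$ and $(\mathfrak{g}'_\mathbb{C},K')$ acts only on $\bigotimes_s L(\sigma_{n_s,\kappa}(D_s))$. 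Because $\rmu(\kappa)_{\mathrm{diag}}$ centralizes all of $(\mathfrak{g}_{n,\mathbb{C}},K)\supset(\mathfrak{g}'_\mathbb{C},K')$, taking diagonal invariants is compatible with the $(\mathfrak{g}'_\mathbb{C},K')$-decomposition and yields
\[L(\kappa\mathbbm{1}_n)\cong\bigoplus_{(D_s)}\left(\bigotimes_s L(\sigma_{n_s,\kappa}(D_s))\right)\otimes\left(\bigotimes_s V_{\lambda_\kappa(D_s)}\right)^{\rmu(\kappa)}.\]
Each $L(\sigma_{n_s,\kappa}(D_s))$ is an irreducible unitary lowest weight module, so the external tensor product over the factors of $(\mathfrak{g}'_\mathbb{C},K')$ is irreducible with lowest $K'$-type $\boxtimes_s\sigma_{n_s,\kappa}(D_s)$; distinct tuples therefore give pairwise non-isomorphic summands, and reading off multiplicities gives
\[\mathrm{Hom}_{(\mathfrak{g}'_\mathbb{C},K')}\left(\bigotimes_s L(\sigma_{n_s,\kappa}(D_s)),\,L(\kappa\mathbbm{1}_n)\right)\cong\left(\bigotimes_s V_{\lambda_\kappa(D_s)}\right)^{\rmu(\kappa)}.\]

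Next I would reduce the right-hand side to the same space. The decomposition of $\ph{n_s,\kappa}$ gives $\ph{n_s,\kappa}(D_s)\cong U_{\sigma_{n_s,\kappa}(D_s)}\boxtimes V_{\lambda_\kappa(D_s)}$ as $K_{(n_s)}\times\rmu(\kappa)$-modules, whence
\[\left(\bigotimes_s\ph{n_s,\kappa}(D_s)\right)^{\rmu(\kappa)}\cong\left(\bigotimes_s U_{\sigma_{n_s,\kappa}(D_s)}\right)\otimes\left(\bigotimes_s V_{\lambda_\kappa(D_s)}\right)^{\rmu(\kappa)}\]
as $K'$-modules, with $K'$ acting trivially on the second factor. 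Writing $W=\bigotimes_s U_{\sigma_{n_s,\kappa}(D_s)}$ and using $U_{\sigma'_{n_s,\kappa}(D_s)}\cong U_{\sigma_{n_s,\kappa}(D_s)}^*$, the right-hand side of the lemma becomes
\[\left(W\otimes\left(\bigotimes_s V_{\lambda_\kappa(D_s)}\right)^{\rmu(\kappa)}\otimes W^*\right)^{K'}\cong\left(\bigotimes_s V_{\lambda_\kappa(D_s)}\right)^{\rmu(\kappa)}\otimes(W\otimes W^*)^{K'}.\]
As $W$ is an irreducible $K'$-module, $(W\otimes W^*)^{K'}=\mathrm{End}_{K'}(W)=\mathbb{C}$ by Schur's lemma, so both sides of the lemma are identified with $\left(\bigotimes_s V_{\lambda_\kappa(D_s)}\right)^{\rmu(\kappa)}$, proving the claim.

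The main obstacle I anticipate is the bookkeeping in the first step: verifying that the $\rmu(\kappa)$ occurring in the statement is exactly the diagonal copy in $\rmu(\kappa)^d$, that this diagonal copy is the one dual to the full $(\mathfrak{g}_{n,\mathbb{C}},K)$, and that passing to its invariants commutes with the $(\mathfrak{g}'_\mathbb{C},K')$-isotypic decomposition. One must also confirm that the modules $\bigotimes_s L(\sigma_{n_s,\kappa}(D_s))$ are genuinely irreducible and mutually non-isomorphic, so that the $\mathrm{Hom}$ truly computes the multiplicity rather than a larger endomorphism contribution; this is where the combinatorial constraints packaged in $D_s\in\Delta_{n_s,\kappa}$, guaranteeing that $(\sigma_{n_s,\kappa}(D_s),\lambda_\kappa(D_s))$ are honest Howe-dual pairs, are essential.
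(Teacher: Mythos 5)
Your proof is correct, and it rests on the same two inputs as the paper's (the Kashiwara--Vergne/Howe decomposition of Proposition~\ref{prop:howe} applied factorwise to $L_{n,\kappa}\cong\bigotimes_s L_{n_s,\kappa}$, and the identification $L(\kappa\mathbbm{1}_n)\cong L_{n,\kappa}^{\rmu(\kappa)}$), but it is organized differently. The paper first proves the isomorphism \emph{before} taking $\rmu(\kappa)$-invariants: it computes $\mathrm{Hom}_{(\mathfrak{g}'_\mathbb{C},K')}\bigl(\bigotimes_s L(\sigma_{n_s,\kappa}(D_s)),\,L_{n,\kappa}\bigr)$ by factoring it as $\bigotimes_s\mathrm{Hom}_{(\mathfrak{g}_{n_s,\mathbb{C}},K_{(n_s)})}(L(\sigma_{n_s,\kappa}(D_s)),L_{n_s,\kappa})$ and then using the lowest-$K$-type property of lowest weight modules --- a homomorphism out of $L(\sigma)$ is determined by where $U_\sigma$ goes, and that image must lie in $\ker\mathfrak{p}^-=\ph{n_s,\kappa}$ --- so each factor becomes $\mathrm{Hom}_{K_{(n_s)}}(U_{\sigma_{n_s,\kappa}(D_s)},\ph{n_s,\kappa}(D_s))\cong(\ph{n_s,\kappa}(D_s)\otimes U_{\sigma'_{n_s,\kappa}(D_s)})^{K_{(n_s)}}$; restriction to $\rmu(\kappa)$-invariants is performed only in the last line. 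You instead take $\rmu(\kappa)$-invariants first and identify both sides with the common multiplicity space $\bigl(\bigotimes_s V_{\lambda_\kappa(D_s)}\bigr)^{\rmu(\kappa)}$, replacing the lowest-$K$-type argument by irreducibility and pairwise non-isomorphism of the external tensor products $\bigotimes_s L(\sigma_{n_s,\kappa}(D_s))$ on the left and by Schur's lemma on the right. Your route is slightly longer and obliges you to justify the points you flag (irreducibility of the external tensor products, compatibility of diagonal invariants with the isotypic decomposition, homomorphisms out of a cyclic module landing in a finite sub-sum), all of which do hold here; what it buys is an explicit identification of the Hom-space with $\bigl(\bigotimes_s V_{\lambda_\kappa(D_s)}\bigr)^{\rmu(\kappa)}$, which is exactly the dimension formula the paper invokes without further comment in the $d=2$ existence-and-uniqueness proposition that follows Corollary~\ref{cor:diff}.
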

\begin{proof}
	We note that $L_{n,\kappa}(\emptyset)=L_{n,\kappa}^{\rmu(\kappa)} \cong L(\kappa\mathbbm{1}_n)$ by Proposition~\ref{prop:howe}.
	We have
	\begin{eqnarray*}\mathrm{Hom}_{(\mathfrak{g}'_\mathbb{C},\calk')}
		\left(\underset{s=1}{\overset{d}{\otimes}} L(\sigma_{n_s,\kappa}(D_s)), L_{n,\kappa}\right)
		&\cong&  \underset{s=1}{\overset{d}{\otimes}}\mathrm{Hom}_{(\mathfrak{g}_{n_s,\mathbb{C}}, \calk_{(n_s)})}
		\left(L(\sigma_{n_s,\kappa}(D_s)),L_{n_s,\kappa}\right)\\
		&\cong&  \underset{s=1}{\overset{d}{\otimes}}\mathrm{Hom}_{\calk_{(n_s)}}
		\left( U_{\sigma_{n_s,\kappa}(D_s)}, \ph{n_s,\kappa}\right)\\
		&=&  \underset{s=1}{\overset{d}{\otimes}}\mathrm{Hom}_{\calk_{(n_s)}}
		\left( U_{\sigma_{n_s,\kappa}(D_s)}, \ph{n_s,\kappa}(D_s)\right)\\
		&\cong& \underset{s=1}{\overset{d}{\otimes}}\left(\ph{n_s,\kappa}(D_s)
		\otimes U_{\sigma'_{n_s,\kappa}(D_s)}\right)^{\calk_{(n_s)}}\\
		&\cong& \left(\left(\underset{s=1}{\overset{d}{\otimes}}\ph{n_s,\kappa}(D_s)\right)
		\otimes \left(\underset{s=1}{\overset{d}{\otimes}}U_{\sigma'_{n_s,\kappa}(D_s)}\right)\right)^{\calk'}.
	\end{eqnarray*}
	Restricting to the $\rmu(\kappa)$-invariant subspace gives the desired isomorphism.
\end{proof}

There is a natural injection
\begin{align*}
	\left(\underset{s=1}{\overset{d}{\otimes}}\ph{n_s,\kappa}(D_s)\right)^{\rmu(\kappa)}
	\otimes \left(\underset{s=1}{\overset{d}{\otimes}}U_{\sigma'_{n_s,\kappa}(D_s)}\right)
	 & \hookrightarrow \left(\underset{s=1}{\overset{d}{\otimes}}L_{n_s,\kappa}(D_s)\right)^{\rmu(\kappa)}
	\otimes \left(\underset{s=1}{\overset{d}{\otimes}}U_{\sigma'_{n_s,\kappa}(D_s)}\right)                                               \\
	 & \hookrightarrow L_{n,\kappa}^{\rmu(\kappa)}\otimes \left(\underset{s=1}{\overset{d}{\otimes}}U_{\sigma'_{n_s,\kappa}(D_s)}\right) \\
	 & \cong \mathbb{C}[Z_{(n)}]\otimes \left(\underset{s=1}{\overset{d}{\otimes}}U_{\sigma'_{n_s,\kappa}(D_s)}\right).
\end{align*}
We denote the image of $h \in \left(\underset{s=1}{\overset{d}{\otimes}}\ph{n_s,\kappa}(D_s)\right)^{\rmu(\kappa)}
	\otimes \left(\underset{s=1}{\overset{d}{\otimes}}U_{\sigma'_{n_s,\kappa}(D_s)}\right)$ by $\Phi_h(Z_{(n)})$.

Let $\Gamma_{n}$ be a discrete subgroup of $G_{n}$.
Note that
\[
	\mathrm{Hom}_{K}\!\left(U_\sigma,\, C_\mathrm{mod}^\infty(\Gamma_n\backslash G)\right)
	\;\cong\;
	\Bigl[C_\mathrm{mod}^\infty(\Gamma_n\backslash G)\otimes U_{\sigma'}\Bigr]^{K},
\]
where $\sigma'$ is the representation defined by
$\sigma'(k)=\sigma({}^tk^{-1})$ for $k \in K$,
which is isomorphic to the contragredient representation $\sigma^*$ of $\sigma$.
From this, we obtain the following well-known isomorphism.

\begin{prop}\label{prop:holohom}
	We have the isomorphism
	\[\mathrm{Hom}_{(\mathfrak{g}_{n,\mathbb{C}}, K)}\left(L(\sigma),C_\mathrm{mod}^\infty(\Gamma_n\backslash G)\right)
		\cong\left[C_\mathrm{mod}^\infty(\Gamma_n\backslash G)\otimes U_{\sigma'}\right]^{K,\mathfrak{p}_n^-=0}.\]
\end{prop}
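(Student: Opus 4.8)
The plan is to realize both sides as homomorphisms out of the lowest $K$-type $U_\sigma$ of $L(\sigma)$ and then to invoke the $K$-level identification recalled just before the statement. Recall that $L(\sigma)$ is the irreducible unitary lowest weight $(\mathfrak{g}_{n,\bbc},K)$-module with lowest $K$-type $U_\sigma$; concretely $U_\sigma\subset L(\sigma)$ is annihilated by $\mathfrak{p}_n^-$ and generates $L(\sigma)$ under the action of $U(\mathfrak{p}_n^+)$.

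First I would consider the restriction map
\[
\mathrm{Res}\colon \mathrm{Hom}_{(\mathfrak{g}_{n,\bbc},K)}\!\left(L(\sigma),C_\mathrm{mod}^\infty(\Gamma_n\backslash G)\right)
\longrightarrow \mathrm{Hom}_{K}\!\left(U_\sigma,C_\mathrm{mod}^\infty(\Gamma_n\backslash G)\right),
\qquad \Phi\mapsto \Phi|_{U_\sigma}.
\]
Since $U_\sigma$ generates $L(\sigma)$ as a $(\mathfrak{g}_{n,\bbc},K)$-module, $\mathrm{Res}$ is injective. Moreover, for $v\in U_\sigma$ and $X\in\mathfrak{p}_n^-$ one has $X\cdot\Phi(v)=\Phi(X\cdot v)=0$, so the image of $\mathrm{Res}$ lands in $\mathrm{Hom}_{K}\!\left(U_\sigma,\big(C_\mathrm{mod}^\infty(\Gamma_n\backslash G)\big)^{\mathfrak{p}_n^-=0}\right)$.

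For surjectivity, given a $K$-homomorphism $\psi\colon U_\sigma\to \big(C_\mathrm{mod}^\infty\big)^{\mathfrak{p}_n^-=0}$, I would extend it along the generalized Verma module $N(\sigma)=U(\mathfrak{g}_{n,\bbc})\otimes_{U(\mathfrak{k}_n^\bbc\oplus\mathfrak{p}_n^-)}U_\sigma$ (with $\mathfrak{p}_n^-$ acting trivially on $U_\sigma$). By Frobenius reciprocity, a $K$-map out of $U_\sigma$ whose image is $\mathfrak{p}_n^-$-annihilated extends uniquely to a $(\mathfrak{g}_{n,\bbc},K)$-map $\tilde\psi\colon N(\sigma)\to C_\mathrm{mod}^\infty$. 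It then remains to check that $\tilde\psi$ factors through the irreducible quotient $L(\sigma)$, i.e.\ that $\tilde\psi$ kills the maximal proper submodule of $N(\sigma)$. This is where moderate growth enters: the image of $\tilde\psi$ is a $(\mathfrak{g}_{n,\bbc},K)$-submodule of $C_\mathrm{mod}^\infty(\Gamma_n\backslash G)$ generated by a $\mathfrak{p}_n^-$-annihilated vector of $K$-type $\sigma$, hence a quotient of $N(\sigma)$; the invariant Petersson-type Hermitian structure forces this quotient to be unitarizable, and since $N(\sigma)$ is indecomposable with unique simple top $L(\sigma)$, any unitarizable quotient is isomorphic to $L(\sigma)$. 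Thus $\tilde\psi$ descends to $L(\sigma)$, giving the desired preimage.

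Finally, combining the resulting isomorphism
\[
\mathrm{Hom}_{(\mathfrak{g}_{n,\bbc},K)}\!\left(L(\sigma),C_\mathrm{mod}^\infty\right)\cong
\mathrm{Hom}_{K}\!\left(U_\sigma,\big(C_\mathrm{mod}^\infty\big)^{\mathfrak{p}_n^-=0}\right)
\]
with the $K$-level identification $\mathrm{Hom}_{K}(U_\sigma,W)\cong[W\otimes U_{\sigma'}]^{K}$ recalled before the statement (applied to $W=\big(C_\mathrm{mod}^\infty\big)^{\mathfrak{p}_n^-=0}$, and noting $[W\otimes U_{\sigma'}]^{K}=[C_\mathrm{mod}^\infty\otimes U_{\sigma'}]^{K,\mathfrak{p}_n^-=0}$) yields the claim. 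I expect the main obstacle to be precisely the factorization through $L(\sigma)$ in the surjectivity step: one must rule out that $\tilde\psi$ detects a strictly larger, non-unitary quotient of $N(\sigma)$, and the clean route is to invoke unitarity of the image via the moderate-growth/Petersson structure rather than analyzing the submodule lattice of $N(\sigma)$ directly.
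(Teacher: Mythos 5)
The paper offers no proof of this proposition (it is asserted as ``well-known'' right after the observation $\mathrm{Hom}_{K}(U_\sigma,W)\cong[W\otimes U_{\sigma'}]^{K}$), so the comparison is between your argument and what a complete argument must contain. Your skeleton is the right one: restrict to the lowest $K$-type, get injectivity from the fact that $U_\sigma$ generates $L(\sigma)$, and for surjectivity extend through the generalized Verma module $N(\sigma)$ and show the extension factors through $L(\sigma)$. The gap is in how you justify that factorization. You appeal to ``the invariant Petersson-type Hermitian structure'' on the image to force unitarizability, but $C_\mathrm{mod}^\infty(\Gamma_n\backslash G)$ carries no invariant inner product: moderate growth does not imply square-integrability, and the Petersson pairing in this paper is only defined when one argument is a cusp form. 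The proposition is applied precisely to non-cuspidal objects (the restricted Eisenstein series in Theorems~\ref{thm:maintype} and~\ref{thm:mainweight}), so the unitarity argument fails exactly where it is needed. The issue is also not vacuous: in the range $n\le\kappa_v<2n$ allowed by the paper, $N(\sigma)$ can be properly reducible, so one really must rule out that $\widetilde\psi$ detects a larger quotient.

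The standard repair uses holomorphy rather than unitarity. An element $\phi$ of the right-hand side corresponds to a holomorphic $U_{\sigma'}$-valued function on $\hus{n}^\bfa$, and the $(\mathfrak{g}_{n,\mathbb{C}},K)$-module $M_\phi$ it generates maps, by taking the full Taylor expansion at the base point $\bfi_n$, into the dual generalized Verma module $N(\sigma)^\vee$ (the $K$-finite part of the module of all formal holomorphic germs of weight $\sigma$). This map is injective by the identity theorem, since the domain is connected and every element of $M_\phi$ is built from $\pi^+$-derivatives and $K$-translates of a holomorphic object. Now $N(\sigma)^\vee$ has irreducible socle $L(\sigma)$, and the $K$-type $\sigma$ occurs in $N(\sigma)^\vee$ with multiplicity one and lies in that socle; hence the submodule generated by $U_\sigma$ is exactly $L(\sigma)$, so $M_\phi\cong L(\sigma)$ (or $0$) and $\widetilde\psi$ descends as required. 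With that substitution your remaining steps --- injectivity of restriction, Frobenius reciprocity for $N(\sigma)$, and the identification $\mathrm{Hom}_{K}(U_\sigma,W)\cong[W\otimes U_{\sigma'}]^{K}$ --- go through.
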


Under this isomorphism,
we denote by $I_F\in\mathrm{Hom}_{(\mathfrak{g}_{n,\mathbb{C}},K)}\left(L(\sigma),C_\mathrm{mod}^\infty(\Gamma_n\backslash G)\right)$
the corresponding homomorphism of $F \in \left[C_\mathrm{mod}^\infty(\Gamma_n\backslash G)\otimes U_{\sigma'}\right]^{\calk_{n,\infty},\mathfrak{p}_n^-=0}$.

We take the discrete subgroup $\Gamma_{n_s}$ of $G_{n_s}$ for $s=1,\ldots,d$.
Let $\Gamma'$ be the image of $\Gamma_{n_1}\times\cdots\times\Gamma_{n_d}$ in $G_n$.

\begin{thm}\label{thm:maintype}
	Let F be a hermitian modular form of type $\kappa\mathbbm{1}_{n}$ for $\Gamma'$
	and take $D_s \in \Delta_{n_s,\kappa}$ for $s=1,\ldots,d$.
	We put $\pi_{n}^+=(\pi^+_{v,i,j})_{\va}\in (M_{n}(\mathfrak{p}_n^+))^\bfa$.
	We denote by $\mathrm{Res}$ the pullback of the functions on $G_{(n)}$
	by the diagonal embedding $G_{(n_1)}\times\cdots\times G_{(n_d)} \hookrightarrow G_{(n)}$.
	Then, we have
	\[\mathrm{Res}\left(\Phi_{h}(\pi_{n}^+)F\right)
		\in\bigotimes_{s=1}^d\left[C_\mathrm{mod}^\infty(\Gamma_{n_s}\backslash {G_{(n_s)}})\otimes U_{\sigma'_{n_s,\kappa}(D_s)}\right]^{\calk_{(n_s)},\mathfrak{p}_n^-=0}\]
	for any  $h \in \left(\left(\underset{s=1}{\overset{d}{\otimes}}\ph{n_s,\kappa}(D_s)\right)^{\rmu(\kappa)}
		\otimes \left(\underset{s=1}{\overset{d}{\otimes}}U_{\sigma'_{n_s,\kappa}(D_s)}\right)\right)^{\calk'}$.
\end{thm}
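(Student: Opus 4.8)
The plan is to reinterpret the operator $\Phi_h(\pi_n^+)$ representation-theoretically, via Proposition~\ref{prop:holohom} and Lemma~\ref{lem:pluriharmonic}, so that the three defining properties of the target space (holomorphy, $K_{(n_s)}$-equivariance, and $\Gamma_{n_s}$-invariance with moderate growth) all become automatic consequences of equivariance. Since $F$ is a hermitian modular form of type $\kappa\mathbbm{1}_n$ for $\Gamma'$, Proposition~\ref{prop:holohom} attaches to it a $(\mathfrak{g}_{n,\mathbb{C}},K)$-homomorphism $I_F\colon L(\kappa\mathbbm{1}_n)\to C_\mathrm{mod}^\infty(\Gamma'\backslash G_{(n)})$ sending the lowest weight vector to $\phi_F$. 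The key computational input is the Weil-representation formula $l_{n,\kappa}(\pi^+_{v,i,j})=\sqrt{-1}\sum_s X_{v,i,s}Y_{v,j,s}=\sqrt{-1}\,Z_{(n),v,i,j}$: on $L(\kappa\mathbbm{1}_n)=L_{n,\kappa}^{\rmu(\kappa)}=\mathbb{C}[Z_{(n)}]$ the raising operator $\pi^+_{v,i,j}$ acts simply as multiplication by $\sqrt{-1}\,Z_{(n),v,i,j}$. Because $I_F$ intertwines the $\mathfrak{p}_n^+$-action with right differentiation, iterating this identity yields
\[(I_F\otimes\mathrm{id})\bigl(\Phi_h(\sqrt{-1}Z_{(n)})\bigr)=\Phi_h(\pi_n^+)\phi_F,\]
up to the scalar $(\sqrt{-1})^{\deg}$, so that $\Phi_h(\pi_n^+)F$ is literally the image under $I_F$ of the polynomial vector $\Phi_h(Z_{(n)})\in\mathbb{C}[Z_{(n)}]\otimes\bigotimes_s U_{\sigma'_{n_s,\kappa}(D_s)}$.

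Second, Lemma~\ref{lem:pluriharmonic} converts $h$ into a $(\mathfrak{g}'_\mathbb{C},K')$-homomorphism $\Psi_h\colon\bigotimes_s L(\sigma_{n_s,\kappa}(D_s))\to L(\kappa\mathbbm{1}_n)$; here the $\rmu(\kappa)$-invariance of $h$ is exactly what forces the image into $L_{n,\kappa}^{\rmu(\kappa)}=\mathbb{C}[Z_{(n)}]$, and the polynomial $\Phi_h(Z_{(n)})$ is precisely the vector encoding the restriction of $\Psi_h$ to the lowest $K'$-type $\bigotimes_s U_{\sigma_{n_s,\kappa}(D_s)}$ (which determines $\Psi_h$, since $\bigotimes_s L(\sigma_{n_s,\kappa}(D_s))$ is generated by this $K'$-type). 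I would then form the composite
\[\mathrm{Res}\circ I_F\circ\Psi_h\colon \bigotimes_s L(\sigma_{n_s,\kappa}(D_s))\longrightarrow C_\mathrm{mod}^\infty(\Gamma'\backslash G'),\]
a $(\mathfrak{g}'_\mathbb{C},K')$-homomorphism, using that restriction of functions along $G'\hookrightarrow G_{(n)}$ is $(\mathfrak{g}'_\mathbb{C},K')$-equivariant and that $C_\mathrm{mod}^\infty(\Gamma'\backslash G')$ factors as $\bigotimes_s C_\mathrm{mod}^\infty(\Gamma_{n_s}\backslash G_{(n_s)})$. By the first step, $\mathrm{Res}(\Phi_h(\pi_n^+)F)$ is, up to scalar, the value of this composite on the lowest $K'$-type, packaged as a $\bigotimes_s U_{\sigma'_{n_s,\kappa}(D_s)}$-valued function.

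Finally, I would read off the three membership conditions from equivariance alone. Moderate growth and left $\Gamma_{n_s}$-invariance are inherited, since the $\pi^+_{v,i,j}$ are right-invariant differential operators (hence preserve left $\Gamma'$-invariance and moderate growth) and $\mathrm{Res}$ respects both. The right $K_{(n_s)}$-equivariance of type $\sigma'_{n_s,\kappa}(D_s)$ follows because $\Psi_h$ carries the lowest $K'$-type $\bigotimes_s U_{\sigma_{n_s,\kappa}(D_s)}$ equivariantly and $I_F,\mathrm{Res}$ are $K'$-maps. Holomorphy is the crucial point: the lowest $K'$-type of the lowest weight module $\bigotimes_s L(\sigma_{n_s,\kappa}(D_s))$ is annihilated by $\bigoplus_s\mathfrak{p}_{n_s}^-$, and $\mathfrak{g}'_\mathbb{C}$-equivariance of $\Psi_h$, $I_F$, and $\mathrm{Res}$ transports this annihilation to the image, giving $\mathfrak{p}_{n_s}^-\cdot\mathrm{Res}(\Phi_h(\pi_n^+)F)=0$ in each factor. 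Together these place $\mathrm{Res}(\Phi_h(\pi_n^+)F)$ in the asserted tensor product.

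The step I expect to be the main obstacle is the precise identification in the second paragraph: matching the concrete polynomial $\Phi_h(Z_{(n)})$, and hence the operator $\Phi_h(\pi_n^+)$, with the abstract intertwiner $\Psi_h$, and verifying that evaluating the composite on lowest weight vectors reproduces exactly $\mathrm{Res}(\Phi_h(\pi_n^+)F)$ and not some auxiliary projection. This amounts to carefully tracking the chain of injections defining $\Phi_h$ and checking that pluriharmonicity of $h$, together with the $\pi^+\leftrightarrow Z_{(n)}$ dictionary, guarantees the relevant vectors land in the lowest weight module $L(\kappa\mathbbm{1}_n)$ rather than the larger $L_{n,\kappa}$.
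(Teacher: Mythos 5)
Your argument is correct and is essentially the intended proof: the paper itself only says the statement ``can be proved in exactly the same way as Theorem~4.10 in \cite{Takeda2025Kurokawa}'', and your chain --- $F\mapsto I_F$ via Proposition~\ref{prop:holohom}, $h\mapsto\Psi_h$ via Lemma~\ref{lem:pluriharmonic}, the identification of $\pi^+_{v,i,j}$ with multiplication by $\sqrt{-1}Z_{(n),v,i,j}$ on $L(\kappa\mathbbm{1}_n)\cong\mathbb{C}[Z_{(n)}]$, and reading off holomorphy, equivariance and invariance from the $(\mathfrak{g}'_{\mathbb{C}},K')$-equivariance of $\mathrm{Res}\circ I_F\circ\Psi_h$ evaluated on the lowest $K'$-type --- is precisely the adaptation of that argument that the preceding lemmas are set up to enable. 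The only point you gloss over (as does the paper) is why the resulting jointly holomorphic, jointly equivariant function on $G_{(n_1)}\times\cdots\times G_{(n_d)}$ lies in the algebraic tensor product $\bigotimes_s[\,\cdot\,]$ rather than merely in the corresponding space of functions of several variables, but this is a standard finite-dimensionality/factorization point and not a defect specific to your write-up.
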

\begin{proof}
	This can be proved in exactly the same way as Theorem~4.10 in \cite{Takeda2025Kurokawa}.
\end{proof}

Using Proposition~\ref{prop:automisom}, we translate Theorem~\ref{thm:maintype} into the theorem of hermitian modular forms on the hermitian upper space $\hus{n}$.

\begin{thm}\label{thm:mainweight}
	Let F be a hermitian modular form in $M_\kappa(\Gamma_K^{(n)})$
	and take $D_s \in \Delta_{n_d,\kappa}$ for $s=1,\ldots,d$.
	Then, we have
	\[\mathrm{Res}\left((\Phi_{h}(\partial_{Z})) F\right)\in \underset{s=1}{\overset{d}{\otimes}} M_{(\sigma_{n_s,\kappa}(D_s))}(\Gamma_K^{(n_s)})\]
	for any $h \in$ {\small $\left(\left(\underset{s=1}{\overset{d}{\otimes}}\ph{n_s,\kappa}(D_s)\right)^{\rmu(\kappa)}
		\otimes \left(\underset{s=1}{\overset{d}{\otimes}}U_{\sigma'_{n_s,\kappa}(D_s)}\right)\right)^{\calk'}$}.
\end{thm}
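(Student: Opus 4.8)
The plan is to deduce the statement from Theorem~\ref{thm:maintype} by transporting both the modular form and the differential operator through the dictionary of Proposition~\ref{prop:automisom}. First I would attach to $F\in M_\kappa(\Gamma_K^{(n)})$ its associated function $\phi_F$ on $G_{(n)}$, given by $\phi_F(g)=\rho(M(g))^{-1}F(g\langle\bfi_n\rangle)$. Since the weight $\det^\kappa$ corresponds under Proposition~\ref{prop:automisom} to the type $\kappa\mathbbm{1}_n$, the function $\phi_F$ is a hermitian modular form of type $\kappa\mathbbm{1}_n$ for $\Gamma_K^{(n)}$, so it is an admissible input for Theorem~\ref{thm:maintype} (with $\Gamma_{n_s}=\Gamma_K^{(n_s)}$ and $\Gamma'$ their block-diagonal image inside $\Gamma_K^{(n)}$).

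The crux is to verify that the holomorphic differential operator $\Phi_h(\partial_Z)$ on the $\hus{n}^\bfa$-side corresponds, under $F\mapsto\phi_F$, to the right action of $\Phi_h(\pi_n^+)$ on the $G_{(n)}$-side. This is exactly what the two parallel sets of formulas in Lemma~\ref{lem:deri} are designed to provide: the operators $\partial/\partial Z_{v,i,j}$ and $\pi^+_{v,i,j}$ act by identical expressions on the basic automorphy data $\delta_g$ and $\Delta_g$. Because $\Phi_h$ is a polynomial in these variables, repeatedly applying Lemma~\ref{lem:deri} and tracking the weight-$\rho$ normalization shows that $\Phi_h(\partial_Z)F$ and $\Phi_h(\pi_n^+)\phi_F$ are the same object read in the two pictures. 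I expect this matching to be the main obstacle, since it requires carefully identifying the holomorphic tangent directions $\mathfrak{p}_n^+$ with the coordinate derivatives in $Z$ and keeping the automorphy factor consistent; once Lemma~\ref{lem:deri} is available, however, the verification is essentially formal.

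With the correspondence in hand, I would apply Theorem~\ref{thm:maintype} to $\phi_F$ to obtain
\[\mathrm{Res}\left(\Phi_h(\pi_n^+)\phi_F\right)\in\bigotimes_{s=1}^d\left[C_\mathrm{mod}^\infty(\Gamma_{n_s}\backslash G_{(n_s)})\otimes U_{\sigma'_{n_s,\kappa}(D_s)}\right]^{K_{(n_s)},\mathfrak{p}_n^-=0},\]
and then invoke Proposition~\ref{prop:automisom} once more, in the reverse direction and separately on each factor $G_{(n_s)}$, to identify the $s$-th tensor factor with $M_{\sigma_{n_s,\kappa}(D_s)}(\Gamma_K^{(n_s)})$. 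Collecting the factors yields
\[\mathrm{Res}\left(\Phi_h(\partial_Z)F\right)\in\bigotimes_{s=1}^d M_{\sigma_{n_s,\kappa}(D_s)}(\Gamma_K^{(n_s)}),\]
which is the assertion. The one remaining point to check is that the restriction $\mathrm{Res}$ to $\hus{n_1}^\bfa\times\cdots\times\hus{n_d}^\bfa$ matches the pullback along the diagonal embedding $G_{(n_1)}\times\cdots\times G_{(n_d)}\hookrightarrow G_{(n)}$, which is again immediate from the compatibility of the two descriptions in Proposition~\ref{prop:automisom}.
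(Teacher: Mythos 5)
Your outer skeleton matches the paper's: pass from $F$ to $\phi_F$, apply Theorem~\ref{thm:maintype}, and come back factor by factor via Proposition~\ref{prop:automisom}. But the step you dismiss as ``essentially formal'' is exactly where the real content of the paper's proof lies, and as you state it the claim is false. The operators $\pi^+_{v,i,j}$ and $\partial/\partial Z_{v,i,j}$ do \emph{not} correspond exactly under $F\mapsto\phi_F$: by Lemma~\ref{lem:pideri} (not Lemma~\ref{lem:deri}, which only treats $\delta_g$ and $\Delta_g$) one has, for a holomorphic $f$,
\[(\pi^+_{v,i,j}\tilde f)(g_Z)=2\bigl({}^tY^{1/2}\cdot\widetilde{\partial_Z f}(g_Z)\cdot{}^tY^{1/2}\bigr)_{v,i,j},\]
so already a single application introduces conjugation by $Y^{1/2}$, and when you iterate to form $\Phi_h(\pi_n^+)\phi_F$ the later $\pi^+$'s also hit the previously created $Y^{1/2}$ factors and the automorphy factor $\det(Y)^{\kappa/2}$ sitting inside $\phi_F$. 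Consequently $\Phi_h(\pi_n^+)\phi_F$ equals a top-degree term $2^{m_h}\det(Y)^{\kappa/2}\,\Phi_h({}^tY^{1/2}\partial_Z{}^tY^{1/2})\phi_F$ \emph{plus} a remainder $R$ of strictly lower degree in $Y^{1/2}$; it is not simply ``$\Phi_h(\partial_Z)F$ read in the other picture.''

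Two further ingredients, both absent from your proposal, are needed to close the argument. First, the $K'$-invariance of $h$ is what lets one pull the $Y^{1/2}$-conjugation out of $\Phi_h$ upon restriction, converting $\Phi_h({}^tY^{1/2}\partial_Z{}^tY^{1/2})$ into $\bigl(\otimes_s\rho_{n_s,D_s}(Y_s^{1/2},{}^tY_s^{1/2})\bigr)\Phi_h(\partial_Z)$ on $\hus{n_1}^\bfa\times\cdots\times\hus{n_d}^\bfa$; you never use this hypothesis on $h$, even though it is the reason the leading term has the right weight. Second, one must argue that the remainder $R$ vanishes: the paper does this by observing that Proposition~\ref{prop:automisom} produces a \emph{holomorphic} $f$ with $\phi_f=\mathrm{Res}(\Phi_h(\pi_n^+)\phi_F)$, so after dividing out the weight factor the non-holomorphic leftover $(\otimes_s\rho_{n_s,D_s}(Y_s^{1/2},{}^tY_s^{1/2})^{-1})R$ must be zero. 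Without these two steps you have only shown that \emph{some} vector-valued modular form arises from $\Phi_h(\pi_n^+)\phi_F$, not that it coincides (up to the constant $2^{m_h}$) with $\mathrm{Res}(\Phi_h(\partial_Z)F)$, which is what the theorem asserts.
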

Before the proof, we provide some notation and a lemma.

\begin{dfn}
	For a holomorphic function $f$ on $\hus{n}^\bfa$ and a representation $(\sigma, U_\sigma)$ of $K^\bbc:=\prod_{\va}(\mathrm{GL}_n(\mathbb{C})\times \mathrm{GL}_n(\mathbb{C}))$,
	we define the function $\tilde{f}$ on $G$
	and  the representation $(\widetilde{\sigma}, U_\sigma)$ of $G$ as follows:
	\[\tilde{f}(g)=f(g\left<\bfi_n\right>),\qquad \tilde{\sigma}(g)=\sigma(j(g,\bfi_n))\]
	for $g\in G$.
\end{dfn}

We take the section of $G\ni g\mapsto g\left<\bfi_n\right>\in\hus{n}^\bfa$ by
\[Z=X+\sqrt{-1}Y\in\hus{n} \mapsto g_Z:=\begin{pmatrix} Y^{1/2} & XY^{-1/2}\\ 0& Y^{-1/2}\\ \end{pmatrix}\in G.\]

\begin{lem}[c.f. Ban \cite{ban2006rankin}]\label{lem:pideri}
	For a holomorphic function $f$ on $\hus{n}^\bfa$ and a representation $(\sigma, U_\sigma)$ of $K^\bbc$,
	we have
	\begin{eqnarray*}
		(\pi_{v,i,j}^+\tilde{f})(g)&=&2(\mu(g)^{-1}\cdot\widetilde{\partial_Zf}(g)\cdot{}^t\lambda(g)^{-1})_{v,i,j},\\
		(\pi_{v,i,j}^+\tilde{\sigma})(g)&=&\sqrt{-1}\tilde{\sigma}(g)\cdot d\sigma(\lambda(g)^{-1}\cdot\overline{\mu(g)}\cdot e_{v,i,j},0).
	\end{eqnarray*}

	In particular, for $Z=X+\sqrt{-1}Y$, we have
	\begin{eqnarray*}
		(\pi_{v,i,j}^+\tilde{f})(g_Z)&=&2({}^tY^{1/2}\cdot\widetilde{\partial_Zf}(g_Z)\cdot {}^tY^{1/2})_{v,i,j},\\
		(\pi_{v,i,j}^+\tilde{\sigma})(g)&=&\sqrt{-1}\tilde{\sigma}(g)\cdot d\sigma(e_{v,i,j},0).
	\end{eqnarray*}
\end{lem}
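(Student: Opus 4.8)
The plan is to evaluate the right regular action $(\pi^+_{v,i,j}F)(g)=\frac{d}{dt}\big|_{t=0}F(g\exp(t\pi^+_{v,i,j}))$ directly on $F=\tilde f$ and $F=\tilde\sigma$, the operator acting only in the $v$-th factor. First I would make $\pi^+_{v,i,j}$ explicit: conjugating $\left(\begin{smallmatrix}0&e_{v,i,j}\\0&0\end{smallmatrix}\right)$ by the Cayley element $\mathfrak c$ gives $\pi^+_{v,i,j}=\tfrac12\left(\begin{smallmatrix}-\sqrt{-1}e_{v,i,j}&e_{v,i,j}\\ e_{v,i,j}&\sqrt{-1}e_{v,i,j}\end{smallmatrix}\right)$. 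Writing $g=\left(\begin{smallmatrix}A&B\\C&D\end{smallmatrix}\right)$ and $g\exp(t\pi^+_{v,i,j})=g+t\,g\pi^+_{v,i,j}+O(t^2)$, I can read off the first-order perturbations of the four blocks, which feed both computations.

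For $\tilde f(g)=f(g\langle\bfi_n\rangle)$, holomorphy of $f$ means only the $\partial_Z$-derivative survives, so the chain rule gives $(\pi^+_{v,i,j}\tilde f)(g)=\sum_{k,l}\frac{\partial f}{\partial Z_{k,l}}(W)\,\dot W_{k,l}$ with $W=g\langle\bfi_n\rangle$ and $\dot W=\frac{d}{dt}\big|_{0}(g\exp(t\pi^+_{v,i,j}))\langle\bfi_n\rangle$. A short computation with the block perturbations collapses $\dot W$ to $2(A-WC)\,e_{v,i,j}\,\lambda(g)^{-1}$ (the cross terms cancel because $\bfi_n$ is central), whence $(\pi^+_{v,i,j}\tilde f)(g)=2\big({}^{t}(A-WC)\,\widetilde{\partial_Z f}(g)\,{}^{t}\lambda(g)^{-1}\big)_{i,j}$. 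The decisive point is the identity ${}^{t}(A-WC)=\mu(g)^{-1}$, equivalently $A-WC=(\sqrt{-1}\adj C+\adj D)^{-1}=({}^{t}\mu(g))^{-1}$. I would prove this not by expansion but by comparing the differential of $Z\mapsto g\langle Z\rangle$ with that of $W\mapsto g^{-1}\langle W\rangle$, using $g^{-1}=\left(\begin{smallmatrix}\adj D&-\adj B\\ -\adj C&\adj A\end{smallmatrix}\right)$ obtained from $\adj g J_n g=J_n$; matching the two expressions for $dZ$ forces $(A-WC)^{-1}=\adj D+Z\,\adj C$, and evaluating at $g=I_{2n}$ fixes the scalar. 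Substituting yields the first asserted formula.

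For $\tilde\sigma(g)=\sigma(M(g))=\sigma(\lambda(g),\mu(g))$, I would factor $M(g\exp(t\pi^+_{v,i,j}))=M(g)\cdot\big(M(g)^{-1}M(g\exp(t\pi^+_{v,i,j}))\big)$ and differentiate, getting $(\pi^+_{v,i,j}\tilde\sigma)(g)=\tilde\sigma(g)\,d\sigma\big(\lambda(g)^{-1}\dot\lambda,\ \mu(g)^{-1}\dot\mu\big)$. Here I would compute $\dot\lambda,\dot\mu$ from the block perturbations \emph{directly} rather than through a putative cocycle for $\mu$: the $\mu$-contributions cancel to $\dot\mu=0$, while $\dot\lambda=(\sqrt{-1}D+C)\,e_{v,i,j}$. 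Since $\overline{\mu(g)}=D-\sqrt{-1}C$, one has $\sqrt{-1}D+C=\sqrt{-1}\,\overline{\mu(g)}$, so $\lambda(g)^{-1}\dot\lambda=\sqrt{-1}\,\lambda(g)^{-1}\overline{\mu(g)}\,e_{v,i,j}$, which is exactly the second formula.

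Finally I would specialise to $g=g_Z$, where $C=0$, $A=Y^{1/2}$ and $\lambda(g_Z)=D=Y^{-1/2}$; then $\mu(g_Z)^{-1}={}^{t}Y^{1/2}={}^{t}\lambda(g_Z)^{-1}$ and $\lambda(g_Z)^{-1}\overline{\mu(g_Z)}=I_n$, which produce the two ``in particular'' identities. The main obstacle is precisely the algebraic bookkeeping in the two highlighted facts---the identity $A-WC=({}^{t}\mu(g))^{-1}$ and the cancellation $\dot\mu=0$---both of which rest on the unitary relations $\adj g J_n g=J_n$ rather than on naive symplectic-style manipulation; once they are in hand, the statement reduces to transposing indices and reading off the $\mu(g)^{-1}(\cdot)\,{}^{t}\lambda(g)^{-1}$ pattern.
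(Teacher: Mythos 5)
Your derivation of the first identity is sound, and since the paper offers no proof of this lemma (it defers to Ban), a direct computation of this kind is the right thing to do. The block perturbation $g\pi^+_{v,i,j}=\tfrac12 g\left(\begin{smallmatrix}-\sqrt{-1}e&e\\ e&\sqrt{-1}e\end{smallmatrix}\right)$, the collapse $\dot W=2(A-WC)\,e_{v,i,j}\,\lambda(g)^{-1}$, and the identity $(A-WC)(\sqrt{-1}\adj{C}+\adj{D})=I_n$ (a consequence of $\adj{g}J_ng=J_n$) all check out; and because $f$ and $g\mapsto g\left<\bfi_n\right>$ are holomorphic in the entries of $g$, the naive differentiation along $g\exp(t\pi^+_{v,i,j})$ really does compute the complex-linearly extended right derivation. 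The specializations at $g_Z$ are also correct.

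The second identity is where the argument fails. You read $\tilde{\sigma}(g)$ as $\sigma(\lambda(g),\mu(g))$ and claim the $\mu$-contributions ``cancel to $\dot\mu=0$'' from the block perturbations. But $\mu(g)=\sqrt{-1}\,\overline{C}+\overline{D}$ is \emph{antiholomorphic} in the entries of $g$, and $\pi^+_{v,i,j}$ lies in $\mathfrak{g}^{\bbc}_{n,v}\setminus\mathfrak{g}_{n,v}$; the action of $\mathfrak{g}^{\bbc}_{n,v}$ is by definition the \emph{complex-linear} extension of the right derivation, so the relevant perturbation of $\overline{C}$ is $\overline{(g\,\theta(\pi^+_{v,i,j}))_C}$, where $\theta$ is the conjugation fixing $\mathfrak{u}(n,n)$, and not $\overline{(g\pi^+_{v,i,j})_C}$. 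Your cancellation uses the latter, conjugate-linear rule, which agrees with the correct one only on real directions. With the correct rule one finds $\pi^+_{v,i,j}\mu(g)=\sqrt{-1}\,\overline{\lambda(g)}\,e_{v,j,i}\neq0$: concretely, for $n=1$ and $g=I_2$, writing $\pi^+=X+\sqrt{-1}Y$ with $X=\tfrac12\left(\begin{smallmatrix}0&1\\1&0\end{smallmatrix}\right)$ and $Y=\tfrac12\left(\begin{smallmatrix}-1&0\\0&1\end{smallmatrix}\right)$ in $\mathfrak{u}(1,1)$, one gets $(X\mu)(I_2)+\sqrt{-1}(Y\mu)(I_2)=\tfrac{\sqrt{-1}}{2}+\tfrac{\sqrt{-1}}{2}=\sqrt{-1}$. (The same dichotomy is what makes Proposition~\ref{prop:automisom} true: with the complex-linear convention $\pi^-_{v,i,j}\mu=0$, as needed for $\phi_f$ to be killed by $\mathfrak{p}^-_n$; with your rule it would not be.) Consequently $\sigma(\lambda(g),\mu(g))$ does \emph{not} satisfy the displayed identity; the identity holds for $\sigma(j(g,\bfi_n))$ with an automorphy factor whose second component is holomorphic in $g$ (e.g.\ $\overline{\mu(g)}=D-\sqrt{-1}C$, which genuinely is annihilated by $\pi^+_{v,i,j}$), and this is exactly the content of the paper's warning that $M(g,z)$ differs from Ban's $j(g,z)$. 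To repair the proof you must either work with $j$ rather than $M$, or keep $M$ and record the nonzero second slot $\sqrt{-1}\,\mu(g)^{-1}\overline{\lambda(g)}\,e_{v,j,i}$ (which does not vanish even at $g=g_Z$, where it equals $\sqrt{-1}\,e_{v,j,i}$).
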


Note that the definition of $M(g,z)$ in this paper is different from $j(g,z)$ in \cite{ban2006rankin}.

For $D \in \Delta_{n,\kappa}$,
we denote by $\rho_{n,D}$ the representation of $K^\bbc$ with a dominant integral weight $D$.
(Then, $\sigma_{n,\kappa}(D)=\det^{\kappa}\otimes\rho_{n,D}$.)

\begin{proof}[Proof of Theorem~\ref{thm:mainweight}]
	From Theorem~\ref{thm:maintype},
	\[
		\mathrm{Res}\left(\Phi_{h}(\pi_{n}^+)\cdot \phi_F\right) \in
		\bigotimes_{s=1}^d\left[C_\mathrm{mod}^\infty(\Gamma_K^{(n_s)}\backslash G_{n_s})\otimes
		U_{\sigma'_{n_s,\kappa}(D_s)}\right]^{\calk_{(n_s)},\mathfrak{p}_n^-=0}.
	\]
	By applying Proposition~\ref{prop:automisom} to each factor in the tensor product,
	there exists $f \in \bigotimes_{s=1}^d M_{\sigma_{n_s,\kappa}(D_s)}(\Gamma_K^{(n_s)})$
	such that $\phi_f=\mathrm{Res}\left(\Phi_{h}(\pi_{n}^+)\cdot \phi_F\right)$.
	Since
	\[\phi_f(g_{Z_1},\ldots,g_{Z_d})=(\underset{s=1}{\overset{d}{\otimes}}\det(Y_s)^{\kappa/2}(\rho_{n_s,D_s}(Y_s^{1/2},{}^tY_s^{1/2})))f(Z_1\ldots,Z_d),\]
	we have
	\[
		f(Z_1,\ldots,Z_d)=(\underset{s=1}{\overset{d}{\otimes}}
		\det(Y_s^{1/2})^{-\kappa}\rho_{n_s,D}(Y_s^{1/2},{}^tY_s^{1/2})^{-1})
		\mathrm{Res}\left(\Phi_{h}(\pi_{n}^+)\cdot \phi_F\right)(g_{Z_1},\ldots,g_{Z_d}).
	\]
	Now we consider $\pi_{v,i,j}^+$'s action on $\phi_F$. Note that $\phi_F(g)=(\widetilde{\det^\kappa}\cdot\widetilde{F})(g)$ using the notation above.

	Under the isomorphism $\hus{n}^\bfa\times K\cong G$,
	we regard the function $\phi_F$ as the function in $Z=X+\sqrt{-1}Y\in \hus{n}^\bfa$, $Y^{1/2}$ and $k\in $ $K$.

	From Lemma~\ref{lem:deri}, we can easily check that
	the highest degree part of $\Phi_{h}(\pi_{n}^+)\cdot \phi_F$ in $Y^{1/2}$
	is $2^{m_h}\det(Y_s)^{\kappa/2} \cdot \Phi_h({}^tY^{1/2}\partial_Z{}^tY^{1/2})\cdot\phi_F$,
	where $m_h$ is a degree of $\Phi_h$.

	Since $h \in$ {\small $\left(\left(\underset{s=1}{\overset{d}{\otimes}}\ph{n_s,\kappa}(D_s)\right)^{\rmu(\kappa)}
		\otimes \left(\underset{s=1}{\overset{d}{\otimes}}U_{\sigma'_{n_s,\kappa}(D_s)}\right)\right)^{\calk'}$},
	we have
	\[
		\mathrm{Res}(2^{m_h}\det(Y)^{\kappa/2} \cdot \Phi_h({}^tY^{1/2}\partial_Z{}^tY^{1/2})\cdot\phi_F)
		=2^{m_h}(\underset{s=1}{\overset{d}{\otimes}}\det(Y_s)^{\kappa/2}(\rho_{n_s,D_s}(Y_s^{1/2},{}^tY_s^{1/2})))\mathrm{Res}( \Phi_h(\partial_Z)\cdot\phi_F).
	\]
	Thus, we may denote $\mathrm{Res}\left(\Phi_{h}(\pi_{n}^+)\cdot \phi_F\right)$ by
	\begin{align*}
		\mathrm{Res}\left(\Phi_{h}(\pi_{n}^+)\cdot \phi_F\right)(g_1,\ldots,g_n)
		=2^{m_h} & (\underset{s=1}{\overset{d}{\otimes}}\det(Y_s)^{\kappa/2}(\rho_{n_s,D_s}(Y_s^{1/2},{}^tY_s^{1/2})))\mathrm{Res}( \Phi_h(\partial_Z)\cdot\phi_F) \\
		         & +(\underset{s=1}{\overset{d}{\otimes}}\det(Y_s)^{\kappa/2})R,
	\end{align*}
	where $R=R(Z_1,\ldots,Z_d,Y_1^{1/2},\ldots,Y_d^{1/2},k_1,\ldots,k_d)$ is
	a polynomial with a degree strictly lower than
	that of $\underset{s=1}{\overset{d}{\otimes}}\rho_{n_s,D_s}(Y_s^{1/2},{}^tY_s^{1/2})$ in $({Y_1}^{1/2},\ldots,{Y_d}^{1/2})$.
	Then we have,
	\[f=2^{m_h}\mathrm{Res}( \Phi_h(\partial_Z)\cdot\phi_F)+(\underset{s=1}{\overset{d}{\otimes}}\rho_{n_s,D_s}(Y_s^{1/2},{}^tY_s^{1/2})^{-1})R.\]
	On the other hand, since $f$ is a holomorphic function, we have $R=0$.
	Therefore, $\mathrm{Res}( \Phi_h(\partial_Z)\cdot\phi_F)=2^{-m_h}f$
	is an element of $\underset{s=1}{\overset{d}{\otimes}} M_{(\sigma_{n_s,\kappa}(D_s))}(\Gamma_K^{(n_s)})$.
\end{proof}

In particular, it can be rewritten in analogy to Ibukiyama's results \cite{ibukiyama1999differential}, as follows:

\begin{cor}\label{cor:diff}
	Let $n_1,\ldots,n_d$ be positive integers such that $n_1\geq\cdots\geq n_d\geq 1$
	and put $n = n_1+\cdots+n_d$.
	We take a family $(\bfk_s, \bfl_s)=(\bfk_{v,s},\bfl_{v,s})_{\va}$ of pairs of dominant integral weights such that $\ell(\bfk_{v,s})\leq n_d$, $\ell(\bfl_{v,s})\leq n_d$
	and $\ell(\bfk_{v,s})+\ell(\bfl_{v,s})\leq\kappa_v$ for each $\va$ and $s=1,\ldots,d$.

	Let $P_v(T)$ be a $\left(V_{n_1,\bfk_{v,1},\bfl_{v,1}}\otimes\cdots\otimes V_{n_d,\bfk_{v,d},\bfl_{v,d}}\right)$-valued polynomial on a space of degree $n$ variable matrices $M_n$ for $\va$,
	and put $P(T)=(P_v(T))_{\va}$.
	the differential operator $\mathbb{D}=P(\partial_{Z})=(P_v(\partial_{Z_v}))_{\va}$ satisfies the Condition (A) for $\det^\kappa$
	and $\det^\kappa\rho_{n_1,\bfk_{1},\bfl_1}\otimes\cdots\otimes\det^\kappa\rho_{n_d,\bfk_d,\bfl_d}$ if and only if
	$P(T)$ satisfies the following conditions:
	\begin{enumerate}
		\item If we put $\widetilde{P}(X_1,\ldots,X_d,Y_1,\ldots,Y_d)=P\left(
			      \begin{pmatrix}
					      X_1\,^tY_1 & \cdots & X_1\,^tY_d \\
					      \vdots     & \ddots & \vdots     \\
					      X_d\,^tY_1 & \cdots & X_d\,^tY_d
				      \end{pmatrix}\right)$ with $X_i, Y_i \in (M_{n_i,\kappa_v})_{\va}$,
		      then $\widetilde{P}$ is pluriharmonic for each $(X_i, Y_i)$.
		\item For $(A_i,B_i) \in \calk_{(n_i)}^\bbc:=\prod_{\va}(\mathrm{GL}_{n_i}(\bbc)\times\mathrm{GL}_{n_i}(\bbc))$,
		      we have
		      \[P\left(
			      \begin{pmatrix} A_1 & & \\&\ddots&\\ & &  A_d\end{pmatrix}
			      T
			      \begin{pmatrix} ^t\!B_1 & & \\&\ddots&\\ & &  ^t\!B_d\end{pmatrix}
			      \right)
			      =\left(\rho_{n_1,\bfk_1,\bfl_1}(A_1,B_1)\cdots\otimes\rho_{n_d,\bfk_d,\bfl_d}(A_d,B_d)\right)P(T).\]
	\end{enumerate}
\end{cor}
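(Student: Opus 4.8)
The plan is to read the Corollary as the explicit polynomial translation of Theorem~\ref{thm:mainweight}, the dictionary being the identification of $P(T)$ with the element $\Phi_h(Z_{(n)})$ produced by the injection preceding that theorem. Concretely, writing $X={}^t(X_1,\dots,X_d)$ and $Y={}^t(Y_1,\dots,Y_d)$ with $X_s,Y_s\in (M_{n_s,\kappa_v})_\va$, the matrix $Z_{(n)}=X\,{}^tY$ becomes exactly the block matrix $(X_i\,{}^tY_j)_{i,j}$ of condition~(1); hence substituting $T=Z_{(n)}$ turns $P$ into $\widetilde P(X,Y)=P(X\,{}^tY)$, which is precisely the image $\Phi_P$ in $\bbc[Z_{(n)}]\otimes\bigotimes_s U_{\sigma'_{n_s,\kappa}(D_s)}$. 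First I would make this substitution precise and set $D_s=(\bfk_s,\bfl_s)$, so that $\sigma_{n_s,\kappa}(D_s)=\det^\kappa\rho_{n_s,\bfk_s,\bfl_s}$ is the target weight in the Corollary, while the length hypotheses guarantee $D_s\in\Delta_{n_s,\kappa}$ for every $s$.

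Next I would dispose of condition~(1). Since $\widetilde P$ depends on $(X,Y)$ only through $X\,{}^tY$, and $(Xc)\,{}^t(Y\bar c)=X(c\,\adj c)\,{}^tY=X\,{}^tY$ for $c\in\rmu(\kappa)$, the polynomial $\widetilde P$ is automatically $\rmu(\kappa)$-invariant for the diagonal action. The residual content of~(1), pluriharmonicity in each block $(X_i,Y_i)$, is exactly the statement that $\widetilde P$ lies in $\bigotimes_s\ph{n_s,\kappa}$ under $\bigotimes_s L_{n_s,\kappa}\cong L_{n,\kappa}$. Thus~(1) says precisely $\widetilde P\in\big(\bigotimes_s\ph{n_s,\kappa}\big)^{\rmu(\kappa)}$, i.e. $P$ lies in the domain of the injection of Theorem~\ref{thm:mainweight}.

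Then I would translate condition~(2) into $K'$-equivariance. Under $L_{n,\kappa}^{\rmu(\kappa)}\cong\bbc[Z_{(n)}]$ the Weil action of $(g_1,g_2)\in K$ sends $Z_{(n)}\mapsto{}^tg_1Z_{(n)}g_2$ up to the twist $\det(g_1)^\kappa$; restricting to the block-diagonal $K'$ with $g_1=\mathrm{diag}(A_1,\dots,A_d)$, $g_2=\mathrm{diag}(B_1,\dots,B_d)$ and complexifying yields exactly the block-diagonal substitution of~(2). Matching the target twist $\bigotimes_s\sigma'_{n_s,\kappa}(D_s)$ against the covariant transformation law, and using the inverse-transpose passage $\sigma\leftrightarrow\sigma'$ to absorb the $\det^\kappa$ factors against the weight $\kappa$ carried by $F$, I would show that $K'$-invariance of $h$ is equivalent to the equivariance~(2) with representation $\bigotimes_s\rho_{n_s,\bfk_s,\bfl_s}$. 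Combining the two translations, $P$ satisfies~(1) and~(2) if and only if $P=\Phi_h$ for some $h$ in the parametrizing space $\big(\big(\bigotimes_s\ph{n_s,\kappa}(D_s)\big)^{\rmu(\kappa)}\otimes\bigotimes_s U_{\sigma'_{n_s,\kappa}(D_s)}\big)^{K'}$.

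Finally I would assemble the equivalence. The ``if'' direction is immediate from Theorem~\ref{thm:mainweight}: for such an $h$, $\mathrm{Res}(\Phi_h(\partial_Z)F)\in\bigotimes_s M_{\sigma_{n_s,\kappa}(D_s)}(\Gamma_K^{(n_s)})$, which is Condition~(A) for the stated weights. For the ``only if'' direction I would argue exhaustiveness: by Proposition~\ref{prop:holohom} and Lemma~\ref{lem:pluriharmonic} the space of holomorphic constant-coefficient operators carrying $M_\kappa(\Gamma_K^{(n)})$ into the fixed target $\bigotimes_s M_{\det^\kappa\rho_{n_s,\bfk_s,\bfl_s}}(\Gamma_K^{(n_s)})$ is identified with $\mathrm{Hom}_{(\mathfrak{g}'_\bbc,K')}\big(\bigotimes_s L(\sigma_{n_s,\kappa}(D_s)),L(\kappa\mathbbm{1}_n)\big)$, which is exactly the parametrizing space above; hence every such $P$ equals some $\Phi_h$ and satisfies~(1) and~(2). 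The main obstacle is this last step: one must verify that a differential operator merely known to satisfy Condition~(A) is forced to be an intertwiner of the stated Harish-Chandra modules, so that nothing outside the image of $h\mapsto\Phi_h$ can occur. This rests on showing that the $\Gamma'$-equivariance and holomorphy of $\mathrm{Res}(\mathbb{D}(F))$, read through the automorphic dictionary of Proposition~\ref{prop:automisom}, pin down the symbol of $\mathbb{D}$ as a $K'$-covariant pluriharmonic polynomial, exactly as in the symplectic argument of \cite{Takeda2025Kurokawa}.
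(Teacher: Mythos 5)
Your proposal takes essentially the same route as the paper: the paper offers no separate proof of Corollary~\ref{cor:diff}, presenting it purely as a rewriting of Theorem~\ref{thm:mainweight} via exactly the dictionary you spell out ($P\leftrightarrow\Phi_h$ under $T=Z_{(n)}=X\,{}^tY$; pluriharmonicity in each block together with automatic $\rmu(\kappa)$-invariance $\leftrightarrow$ membership in $\bigl(\bigotimes_s\ph{n_s,\kappa}(D_s)\bigr)^{\rmu(\kappa)}$; the equivariance law~(2) $\leftrightarrow$ $K'$-invariance after cancelling the $\det^\kappa$ twists), combined with Lemma~\ref{lem:pluriharmonic} and Proposition~\ref{prop:holohom} for the identification of the parametrizing space. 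Your explicit flagging of the ``only if'' direction as requiring an exhaustiveness argument (that Condition~(A) alone forces the symbol to be a $K'$-covariant pluriharmonic polynomial) is, if anything, more careful than the paper, which leaves that direction entirely implicit by deferring to the analogous symplectic argument.
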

The above theorems and corollary do not say anything about the existence of differential operators satisfying condition (A) or how to construct them.
In general, even finding the dimension of the space formed by these differential operators is a difficult problem.
However, it is easy to see when such a differential operator exists only for $d=2$.

\begin{prop}
	The notations are the same as in the above corollary.
	When $d=2$, there exists a differential operator $\mathbb{D}$ satisfying the condition (A) for $\det^\kappa$ and $\det^\kappa\rho_{n_1,\bfk_{1},\bfl_1}\otimes\det^\kappa\rho_{n_2,\bfk_{2},\bfl_2}$ if and only if $\bfk_1=\bfl_2$ and $\bfl_1=\bfk_2$.
	And if it exists, it is unique up to scalar multiplication.
\end{prop}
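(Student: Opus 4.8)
The plan is to convert the statement into a dimension count and to evaluate it via Howe duality. Writing $D_s=(\bfk_s,\bfl_s)$ for the pair of families of Young diagrams attached to $s=1,2$, the prescribed target representation $\det^\kappa\rho_{n_s,\bfk_s,\bfl_s}$ is precisely $\sigma_{n_s,\kappa}(D_s)$. By Corollary~\ref{cor:diff} combined with Theorem~\ref{thm:mainweight} and Lemma~\ref{lem:pluriharmonic}, the space of differential operators $\mathbb{D}=P(\partial_Z)$ satisfying Condition (A) for the source $\det^\kappa$ and this target is identified, through $h\mapsto\Phi_h(\partial_Z)$, with
\[\left(\left(\ph{n_1,\kappa}(D_1)\otimes\ph{n_2,\kappa}(D_2)\right)^{\rmu(\kappa)}\otimes U_{\sigma'_{n_1,\kappa}(D_1)}\otimes U_{\sigma'_{n_2,\kappa}(D_2)}\right)^{K'}.\]
Thus existence together with uniqueness up to scalars is equivalent to this space being one-dimensional, and non-existence to its vanishing; accordingly I would compute its dimension.

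First I would insert the decomposition of $\ph{n_s,\kappa}$ as a $K_{(n_s)}\times\rmu(\kappa)$-module, giving $\ph{n_s,\kappa}(D_s)\cong U_{\sigma_{n_s,\kappa}(D_s)}\boxtimes V_{\lambda_\kappa(D_s)}$. Under the isomorphism $L_{n_1,\kappa}\otimes L_{n_2,\kappa}\cong L_{n,\kappa}$ the relevant $\rmu(\kappa)$ is the diagonal one, acting only through the $V_{\lambda_\kappa(D_s)}$-factors, so taking $\rmu(\kappa)$-invariants commutes with the $K'$-action and yields
\[\left(\ph{n_1,\kappa}(D_1)\otimes\ph{n_2,\kappa}(D_2)\right)^{\rmu(\kappa)}\cong\left(U_{\sigma_{n_1,\kappa}(D_1)}\boxtimes U_{\sigma_{n_2,\kappa}(D_2)}\right)\otimes\left(V_{\lambda_\kappa(D_1)}\otimes V_{\lambda_\kappa(D_2)}\right)^{\rmu(\kappa)}.\]
The last factor is $\mathrm{Hom}_{\rmu(\kappa)}(V_{\lambda_\kappa(D_1)}^{*},V_{\lambda_\kappa(D_2)})$, which is nonzero, and then one-dimensional, exactly when $V_{\lambda_\kappa(D_2)}\cong V_{\lambda_\kappa(D_1)}^{*}$.

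The crux is the dualization of $V_{\lambda_\kappa(D)}$. Reading off the weight $\lambda_\kappa(D)$ from its definition and using that the contragredient of a $\rmu(\kappa)$-representation has highest weight obtained by negating and reversing the entries, I would verify the identity $V_{\lambda_\kappa(D)}^{*}\cong V_{\lambda_\kappa(\widehat D)}$, where $\widehat D=(\bfl,\bfk)$ is obtained from $D=(\bfk,\bfl)$ by interchanging its two Young diagrams. Hence the invariants above are nonzero iff $D_2=\widehat{D_1}$, i.e. $(\bfk_2,\bfl_2)=(\bfl_1,\bfk_1)$, which is exactly the stated condition $\bfk_1=\bfl_2$ and $\bfl_1=\bfk_2$. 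Since all representations are $\boxtimes_{\va}$ of local factors, this must be imposed at each $\va$, the invariants vanishing as soon as one archimedean factor fails.

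When the condition holds, the remaining space collapses to
\[\left(U_{\sigma_{n_1,\kappa}(D_1)}\otimes U_{\sigma'_{n_1,\kappa}(D_1)}\right)^{K_{(n_1)}}\otimes\left(U_{\sigma_{n_2,\kappa}(D_2)}\otimes U_{\sigma'_{n_2,\kappa}(D_2)}\right)^{K_{(n_2)}},\]
and since $\sigma'\cong\sigma^{*}$ and each $U_{\sigma_{n_s,\kappa}(D_s)}$ is $K_{(n_s)}$-irreducible, each factor equals $\mathrm{End}_{K_{(n_s)}}(U_{\sigma_{n_s,\kappa}(D_s)})\cong\bbc$ by Schur's lemma. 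The whole space is therefore one-dimensional, which gives both existence and uniqueness up to scalars; when the condition fails it vanishes, giving non-existence. I expect the main obstacle to be the explicit dualization identity $V_{\lambda_\kappa(D)}^{*}\cong V_{\lambda_\kappa(\widehat D)}$ and the attendant bookkeeping of the correspondence $D_s\leftrightarrow(\bfk_s,\bfl_s)$ under the negation-and-reversal of weights; once the Howe-duality decomposition is in hand, the remaining steps are formal.
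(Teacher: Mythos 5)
Your proposal is correct and follows essentially the same route as the paper: the paper's proof likewise reduces the statement to the dimension of $\left(\left(\ph{n_1,\kappa}(D_1)\otimes\ph{n_2,\kappa}(D_2)\right)^{\rmu(\kappa)}\otimes U_{\sigma'_{n_1,\kappa}(D_1)}\otimes U_{\sigma'_{n_2,\kappa}(D_2)}\right)^{K'}$, identifies it with $\dim_\bbc\left(V_{\lambda_\kappa(D_1)}\otimes V_{\lambda_\kappa(D_2)}\right)^{\rmu(\kappa)}$, and concludes that this is $1$ or $0$ according to whether the two $\rmu(\kappa)$-representations are contragredient. You merely make explicit the intermediate steps (the Howe-duality decomposition, Schur's lemma, and the weight bookkeeping showing $V_{\lambda_\kappa(D)}^{*}\cong V_{\lambda_\kappa(\widehat D)}$) that the paper leaves to the reader.
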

\begin{proof}
	It follows from the fact that
	\[
		\dim_\bbc{\small \left(\left(\underset{s=1}{\overset{2}{\otimes}}\ph{n_s,\kappa}(D_s)\right)^{\rmu(\kappa)}
		\otimes \left(\underset{s=1}{\overset{2}{\otimes}}U_{\sigma'_{n_s,\kappa}(D_s)}\right)\right)^{\calk'}}
		=\dim_\bbc (V_{\lambda_\kappa(D_1)}\otimes V_{\lambda_\kappa(D_2)})^{\rmu(\kappa)}\]
	is equal to	1 if  $V_{\lambda_\kappa(D_1)}$ and $V_{\lambda_\kappa(D_2)}$ are contragredient representations of each other and 0 otherwise.
\end{proof}

\section{Hermitian Eisenstein Series}\label{sec:eisenstein}
In this section, we introduce the hermitian Eisenstein series according to Shimura
\cite[\S 16.5]{shimura2000arithmeticity}. We fix a family of positive integers $\kappa=(\kappa_v)_{\va }$ and an integral ideal $\frakn$ of $\kp$.

Define the following subgroups of $G_{n}$ for $r\leq n$ by
\begin{align*}
	L_{n,r} & =\left\{\left.
	\begin{pmatrix}
		A & 0   & 0            & 0   \\
		0 & I_r & 0            & 0   \\
		0 & 0   & \adj{A}^{-1} & 0   \\
		0 & 0   & 0            & I_r \\\end{pmatrix}
	\in G_{n}\right| A \in \mathrm{GL}_{n-r}(K)\right\}, \\
	U_{n,r} & =\left\{
	\begin{pmatrix}
		I_{n-r} & *   & *       & *   \\
		0       & I_r & *       & 0   \\
		0       & 0   & I_{n-r} & 0   \\
		0       & 0   & *       & I_r \\\end{pmatrix}
	\in G_{n}\right\},                                   \\
	G_{n,r} & =\left\{
	\begin{pmatrix}
		I_{n-r} & 0 & 0       & 0 \\
		0       & * & 0       & * \\
		0       & 0 & I_{n-r} & 0 \\
		0       & * & 0       & * \\\end{pmatrix}
	\in G_{n}\right\}.
\end{align*}
Then the subgroups $P_{n,r}=G_{n,r}L_{n,r}U_{n,r}$ are the standard parabolic subgroups of $G_n$ and there are natural embeddings
$t_{n,r}: \mathrm{GL}_{n-r}(K)\hookrightarrow L_{n,r}$ and $s_{n,r}: G_{r}\hookrightarrow G_{n,r}$.
Define $G_{n,r,v}, G_{n,r, \adele}, L_{n,r,v}, L_{n,r, \adele}$, etc. in the same way as $G_{n,v}, G_{n, \adele}$, etc.

By the Iwasawa decomposition, $G_{n, \adele}$ (resp. $G_{n,v}$) can be decomposed as
$G_{n, \adele}=P_{n,r,\adele}\calk_{n,\adele}$ (resp. $G_{n,v}=P_{n,r,v}\calk_{n,v}$).

We take a Hecke character $\chi$ of $K$ with infinite part $\chi_\infty$ given by
\[
	\chi_\infty(x_\infty)
	= \prod_{v \in \Sigma_K} |x_v|^{-\kappa_v} x_v^{\kappa_v},
\]
and with conductor dividing $\frakn$.
We put $\chi_v=\prod_{w|v}\chi_w$ for a finite place $v$ of $\kp$.

\begin{dfn}
	We define
	\[\epsilon_{n,\kappa,v}(g,s; \frakn, \chi)=\left\{
		\begin{array}{ll}
			\norm{\det(\adj{A}A)}_v^{s}\chi_v(\det A^*)         & (v\in\bfh \text{ and } k\in \calk_{n,v}(\frakn) ),     \\
			0                                                   & (v\in\bfh \text{ and } k\not\in \calk_{n,v}(\frakn) ), \\
			\norm{\delta(g)}^{\kappa_v-2s}\delta(g)^{-\kappa_v} & (\va )
		\end{array}
		\right. \]
	for a complex variable $s$ and $g=t_{n,0}(A)\mu k \in G_{n,v}$
	with $A\in \mathrm{GL}_n(K_v)$, $\mu\in U_{n,0}$ and $k\in \calk_{n,v}$.
	Then, we put
	\[\epsilon_{n,\kappa}(g,s;\frakn,\chi)=\prod_v\epsilon_{n,\kappa,v}(g_v,s;\frakn,\chi),\]
	and define the hermitian Eisenstein series $E_{n,\kappa}(g,s;\frakn,\chi)$ on $G_{n, \adele}$ by
	\[E_{n,\kappa}(g,s;\frakn,\chi)=\sum_{\gamma\in P_{n,0}\backslash \rmu_n(\kp)}\epsilon_{n,\kappa}(\gamma g,s; \frakn, \chi).\]
	For $\theta\in \calk_{n,0}$, we put
	\[E_{n,\kappa}^\theta(g,s;\frakn,\chi)=E_{n,\kappa}(g\theta^{-1},s; \frakn,\chi).\]
\end{dfn}
The hermitian Eisenstein series $E_{n,\kappa}(g,s;\frakn,\chi)$ and $E^\theta_{n,\kappa}(g,s;\frakn,\chi)$ converge absolutely and locally uniformly for $\Re(s)>n$
(see, for example, \cite{Shimura1997Euler}).
\begin{prop}[{\cite[Proposition~17.7.]{shimura2000arithmeticity}}]
	Let $\mu$ be a positive integer such that $\mu\geq n$.
	If $\kappa_v=\mu$ for any $\va$, then $E_{n,\kappa}(g,\mu/2;\frakn,\chi)$ belongs to $\mathcal{A}_n(\det^\mu,\frakn)$
	except when $\mu=n+1$, $\kp=\bbq$, $\chi=\chi_K^{n+1}$,
	where $\chi_K$ is the quadratic character associated to quadratic extension $K/\kp$.
\end{prop}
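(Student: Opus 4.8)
The plan is to verify, for the specialised function $g\mapsto E_{n,\kappa}(g,\mu/2;\frakn,\chi)$, the five defining properties of $\mathcal{A}_n(\det^\mu,\frakn)$, distinguishing those that hold identically in $s$ from the single analytic statement that the continued series is finite at $s=\mu/2$. Left $G_n$-invariance is immediate, since in the sum over $P_{n,0}\backslash\rmu_n(\kp)$ left translation by an element of $\rmu_n(\kp)$ only permutes the cosets. For the invariance under $K_{n,0}(\frakn)$ I would argue place by place from the definition of $\epsilon_{n,\kappa,v}$: at $v\mid\frakn$ the local factor is, by construction, supported on $K_{n,v}(\frakn)$ and right invariant under it, while at $v\nmid\frakn$ one has $K_{n,v}(\frakn)=K_{n,v}$ and the spherical factor is right $K_{n,v}$-invariant; hence the product is right $K_{n,0}(\frakn)$-invariant.

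The right $(K_{n,\infty},\det^\mu)$-equivariance is where the scalar weight enters, and it holds for \emph{every} $s$, not only at the special point. Using the cocycle relation $\delta(gk)=\delta(g)\delta(k)$ for $k\in K_{n,\infty}$ (which stabilises $\bfi_n$) together with $|\delta(k)|=1$, valid because the determinant of a unitary matrix has absolute value one, one computes
\[
\epsilon_{n,\kappa,v}(gk,s;\frakn,\chi)=|\delta(gk)|^{\mu-2s}\delta(gk)^{-\mu}=\delta(k)^{-\mu}\epsilon_{n,\kappa,v}(g,s;\frakn,\chi),
\]
so that $E_{n,\kappa}(gk,s)=\delta(k)^{-\mu}E_{n,\kappa}(g,s)$, which is exactly $\det^\mu$-equivariance under $K_{n,\infty}$. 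The $Z(\mathfrak{g})$-invariance and the moderate growth likewise hold throughout the region of convergence and persist under analytic continuation, the former because each local factor transforms under the central torus of $\mathfrak{g}$ through the character fixed by $\mu$, the latter from uniform estimates on the continuation. Consequently, whenever $s=\mu/2$ already lies in the convergence range $\Re(s)>n$, i.e.\ when $\mu>2n$, there is nothing further to prove.

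The genuine content is therefore the behaviour at $s=\mu/2$ when $\mu\leq 2n$, where this point lies outside the domain of convergence and one must pass to the meromorphic continuation and show it has no pole there. I would obtain the continuation through the Fourier expansion of $E_{n,\kappa}(g,s)$: each Fourier coefficient factors as a product over the finite places of local Siegel-series (Whittaker) integrals against an archimedean confluent hypergeometric factor, the whole normalised by a product of Hecke $L$-functions of the shape $\prod_{i}L_{\kp}(2s-i,\eta\chi_K^{\,i})$, where $\eta$ is the character of $\kp$ determined by $\chi$, following \cite[\S\S16--17]{shimura2000arithmeticity}. At $s=\mu/2$ the archimedean factor degenerates to the holomorphic confluent function (equivalently $|\delta(g)|^{\mu-2s}=1$, so the archimedean local factor collapses to the holomorphic automorphy factor $\delta(g)^{-\mu}$), which is why this is the relevant special value; it then remains only to check that the normalising $L$-factors contribute no pole at $s=\mu/2$.

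The main obstacle is precisely this pole analysis, and it is what singles out the stated exception. At $s=\mu/2=(n+1)/2$ one has $2s=n+1$, so the $i=n$ factor $L_{\kp}(2s-n,\eta\chi_K^{\,n})$ reaches argument $1$; the configuration $\mu=n+1$, $\chi=\chi_K^{n+1}$ is exactly the one in which its character becomes trivial, whence for $\kp=\bbq$ this factor is the Riemann zeta function $\zeta(2s-n)$ with a simple pole at $s=(n+1)/2$. The archimedean Gamma/confluent factors supply a zero at this point whose order grows with the number $m=[\kp:\bbq]$ of infinite places, and a direct count shows this zero cancels the simple pole whenever $m\geq 2$. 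Hence the continuation is holomorphic at $s=\mu/2$ except when $m=1$, i.e.\ $\kp=\bbq$, together with $\mu=n+1$ and $\chi=\chi_K^{n+1}$; outside this single configuration the numerator and denominator poles cancel and $E_{n,\kappa}(g,\mu/2;\frakn,\chi)$ is a finite function satisfying all five conditions, hence lies in $\mathcal{A}_n(\det^\mu,\frakn)$. I would carry out this cancellation following the confluent-hypergeometric computations and local density formulas of \cite[\S\S16--17]{shimura2000arithmeticity}, which constitute the technical heart of the argument.
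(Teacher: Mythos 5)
The paper offers no proof of this statement at all: it is imported verbatim from \cite[Proposition~17.7]{shimura2000arithmeticity}, so your attempt has to be measured against Shimura's argument. In outline you do follow that route, and the soft parts of your sketch are fine: left $G_n$-invariance, the place-by-place right $K_{n,0}(\frakn)$-invariance, the $\det^\mu$-equivariance via the cocycle $\delta(gk)=\delta(g)\delta(k)$ with $\abs{\delta(k)}=1$, and the observation that for $\mu>2n$ the point $s=\mu/2$ lies in the region of convergence where each term is already the holomorphic automorphy factor $\delta(g)^{-\mu}$.

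The genuine gap is that you have misidentified the analytic dichotomy at $s=\mu/2$ in the delicate range $n\leq\mu\leq 2n$. What fails in the excluded configuration is not finiteness but \emph{holomorphy} of the value: the continued series is finite at $s=\mu/2$ even when $\mu=n+1$, $\kp=\bbq$, $\chi=\chi_K^{n+1}$, and what survives there is a nearly holomorphic, non-holomorphic term. The prototype is exactly the $E_2$ phenomenon for $n=1$, $\mu=2$: the value of $\sum(cz+d)^{-2}\abs{cz+d}^{-2s}$ at the special point is $E_2(z)-3/(\pi y)$ --- finite, no pole, but not holomorphic. Mechanically, the simple pole of $L_{\kp}(2s-n,\chi_1\chi_K^{n})$ at $2s=n+1$ (with $\chi_1$ the restriction of $\chi$ to $\adele^\times$) does not create a pole of $E_{n,\kappa}$; rather it cancels against the simple zero of the archimedean Gamma factor attached to the \emph{degenerate} (lower-rank $T$) Fourier coefficients in Shimura's confluent hypergeometric expansion, leaving a nonzero term carrying a negative power of $\det(\Im Z)$; when $m=[\kp:\bbq]\geq 2$ that Gamma-zero has higher order and the term dies, which is why $\kp=\bbq$ appears in the exception. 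So ``show the continuation has no pole at $s=\mu/2$'' is both the wrong goal (it holds in the exceptional case too) and insufficient for the conclusion: membership in $\mathcal{A}_n(\det^\mu,\frakn)$ in the sense the paper uses it (so that $\hat{f}\in M_{\det^\mu}(\Gamma_K^{(n)}(\frakn))$) requires holomorphy of the value on $\hus{n}^\bfa$, and your five-condition checklist --- which, read literally against the paper's definition of $\mathcal{A}_n$, would make the exceptional clause vacuous --- never establishes it. The missing step is precisely Shimura's rank-by-rank vanishing argument for the non-holomorphic terms of the Fourier expansion at $\beta=0$, with the single surviving term in the excluded case.
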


Let $\rho_{r}$ be the representation of $\calk_{(r)}^\bbc$ with a family $(\bfk,\bfl)=(k_{1,v},\ldots,k_{r,v};l_{1,v},\ldots,l_{r,v})_{\va}$ of dominant integral weights. For $n\geq r$, we define the representation $\rho_{n}$ of $\calk_{(n)}^\bbc$ as the representation corresponding to a family $(\bfk',\bfl')=(k_{1,v},\ldots,k_{r,v},k_{r,v},\ldots,k_{r,v}; l_{1,v},\ldots,l_{r,v},0,\ldots,0)_{\va}$ of dominant integral weights.

\begin{dfn}
	We define
	\[\epsilon(f)_{r,v}^n(g,s ;\chi)=\left\{
		\begin{array}{ll}
			\abs{\det \left(\adj{A_r}A_r\right)}_v^{s}\chi_v(\det A_r^*)f(h_r)                  & (v\in\bfh \text{ and } k\in \calk_{n,v}(\frakn)  ),    \\
			0                                                                                   & (v\in\bfh \text{ and } k\not\in \calk_{n,v}(\frakn) ), \\
			\norm{\delta(g)\delta(h_r)^{-1}}^{\kappa_v-2s}\rho_n(M(g))^{-1}\rho_r(M(h_r))f(h_r) & (\va )
		\end{array}
		\right.\]
	for $f\in \mathcal{A}_{0,r}(\rho_r,\frakn)$ ($r<n$)
	and $g=t_{n,r}(A_r)\, \mu_r\,	s_{n,r}(h_r)\, k \in G_{n,v}$ with $A_r\in \mathrm{GL}_{n-r}(K_v)$, $\mu_r\in U_{n,r}$,
	$h_r\in G_{r,v}$ and $k\in \calk_{n,v}$.
	Then, we put
	\[\epsilon(f)_{r}^n(g,s;\chi)=\prod_v\epsilon(f)_{r,v}^n(g_v,s ;\chi)\]
	and define the hermitian Eisenstein series $\left[f\right]_{r}^n(g,s;\chi)$ on
	$G_{n, \adele}$ associated with $f$ by
	\[\left[f\right]_r^n(g,s;\chi)=\sum_{\gamma\in P_{n,r}\backslash \rmu_n(\kp)}\epsilon(f)_{r}^n(\gamma g,s;\chi).\]
\end{dfn}

\section{Pullback Formula}
We fix positive integers $n_1, n_2$ such that $n_1\geq n_2$,
an integral ideal $\frakn$ of $\kp$,
a family $\kappa=(\kappa_v)_{\va}$ of positive integers such that $\kappa_v\geq n_1+n_2$ for any $\va$,
and a family $(\bfk, \bfl)=(\kv,\lv)_{\va }$
of pairs of dominant integral weights such that $\ell(\bfk_v)\leq n_2$, $\ell(\bfl_v)\leq n_2$
and $\ell(\bfk_v)+\ell(\bfl_v)\leq\kappa_v$ for each $\va$.
Put
$n=n_1+n_2$ and $G_{n_1,n_2}=G_{n_1}\times G_{n_2}$.
We set $\rho_r:=\det^\kappa \rho_{r,(\bfk, \bfl)}$ and $\rho'_r:=\det^\kappa \rho_{r,(\bfl, \bfk)}$ for a positive integer $r\geq\max\left\{\ell(\bfk_v),\ell(\bfl_v)\mid \va\right\}$.

\subsection{Double Coset Decomposition}\label{sec:decomp}
We define a natural injection $\iota$ by
\begin{eqnarray*}
	\iota:G_{n_1}\times G_{n_2}&\rightarrow& G_{n}\\
	\left(\begin{pmatrix}A_1 & B_1 \\ C_1 & D_1 \\ \end{pmatrix}, \begin{pmatrix}A_2 & B_2 \\ C_2 & D_2 \\ \end{pmatrix}\right)&\mapsto&
	\begin{pmatrix}A_1 &0 & B_1&0 \\ 0&A_2&0&B_2\\C_1 &0 & D_1&0 \\ 0&C_2&0&D_2\\ \end{pmatrix}.
\end{eqnarray*}

We put $g^\natural=\begin{pmatrix}0&I_r\\I_r&0\\ \end{pmatrix}
	g\begin{pmatrix}0&I_r\\I_r&0\\ \end{pmatrix}$ and $f^\natural(g)=f(g^\natural)$
for $g \in G $ ($G= G_r, G_{r,\adele}, G_{r,v},G_{r,\infty},\ldots$).
To describe the main theorem, we define $f^\dagger_\chi$ by
\[
	f_\chi^\dagger(g)
	= \chi(\det g)\,\overline{f^\natural(g)},
\]
where $\chi$ is a Hecke character of $K$ whose archimedean component
$\chi_\infty$ satisfies the same condition as in
Section~\ref{sec:eisenstein}.
For each place $v$, we define $f^\dagger_{\chi,v}$ to be the corresponding local component of $f^\dagger_\chi$.

The following are well known facts (see, for example, \cite[\S 3]{Garrett1984Pullbacks} and \cite[Appendix B]{mizumoto1991poles}).
\begin{fact}\label{fact:decomp}
	\begin{enumerate}
		\item The double coset $P_{n,0}\backslash G_{n}/G_{n_1,n_2}$ has an irredundant set of
		      representatives
		      \[\left\{\left.\xi_r=\begin{pmatrix}I_{n_1}               & 0               & 0       & 0       \\
               0                     & I_{n_2}         & 0       & 0       \\
               0                     & \widetilde{I_r} & I_{n_1} & 0       \\
               {}^{t}\widetilde{I_r} & 0               & 0       & I_{n_2} \\\end{pmatrix}\right|0\leq r\leq n_2\right\},\]
		      where $\widetilde{I_r}=\begin{pmatrix}0&0\\0&I_{r}\\ \end{pmatrix}\in M_{n_1,n_2}(\mathbb{Z})$.
		\item $P_{n,0} \, \xi_r \, \iota(g_1,g_2)=P_{n,0}\xi_r$ if and only if
		      $g_1=h_1\,s_{n_1,r}(g)$ and $g_2=h_2\,s_{n_2,r}(g^\natural)$
		      with $h_1\in L_{n_1,r}U_{n_1,r}$, $h_2\in L_{n_2,r}U_{n_2,r}$ and $g\in G_{r}$.
		      In particular,  $P_{n,0}\backslash P_{n,0}\,\xi_r G_{n_1,n_2}$ has an irredundant set of coset representatives
		      \[\left\{\xi_r\iota(\gamma_1,\gamma_2)\mid \gamma_1\in P_{n_1,r}\backslash G_{n_1},
			      \gamma_2\in L_{n_2,r}U_{n_2,r}\backslash G_{n_2}\right\}.\]
	\end{enumerate}
\end{fact}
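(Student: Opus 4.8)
The plan is to recast both assertions as statements about the Lagrangian Grassmannian attached to $J_n$, to classify the relevant orbits by a single numerical invariant, and then to reduce Part~(2) to a direct matrix computation. Write $V = K^{2n}$ for the space on which $G_n$ acts, equipped with the nondegenerate form defined by $J_n$, and let $W_0 = \langle e_1,\dots,e_n\rangle$, the maximal isotropic subspace stabilized exactly by $P_{n,0}$. By Witt's theorem $G_n$ acts transitively on the set $\mathcal{L}$ of Lagrangians of $V$, so $P_{n,0}g \mapsto g^{-1}W_0$ identifies $P_{n,0}\backslash G_n$ with $\mathcal{L}$. Under $\iota$ the coordinates split orthogonally as $V = V_1 \perp V_2$, where $V_1$ is spanned by the coordinates indexed by $\{1,\dots,n_1\}\cup\{n+1,\dots,n+n_1\}$ and $V_2$ by the complementary indices; each $V_i$ is $J_n$-nondegenerate and $\iota(G_{n_1}\times G_{n_2}) = \rmu(V_1)\times\rmu(V_2)$. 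Right translation by $G_{n_1,n_2}$ becomes the natural action on $\mathcal{L}$, so the double cosets in Part~(1) are precisely the $\rmu(V_1)\times\rmu(V_2)$-orbits on $\mathcal{L}$.

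To each $W\in\mathcal{L}$ I attach $r(W) = n_2 - \dim_K(W\cap V_2)$. Since $V_2 = V_1^\perp$ and $W = W^\perp$, one checks $\dim(W\cap V_1) - \dim(W\cap V_2) = n_1 - n_2$, so $r(W) = n_1 - \dim(W\cap V_1) = \dim p_i(W) - n_i$ for $i=1,2$ and $0\le r(W)\le n_2$. I claim $r$ is a complete invariant: the isotropic subspaces $A_i = W\cap V_i$, together with the isomorphism $p_1(W)/A_1 \cong p_2(W)/A_2$ induced by the isotropy of $W$ (an anti-isometry of $r$-dimensional spaces), determine $W$ up to $\rmu(V_1)\times\rmu(V_2)$ by Witt's extension theorem. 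Part~(1) then follows once I check $r(\xi_r^{-1}W_0) = r$: since $(\xi_r - I_{2n})^2 = 0$ we have $\xi_r^{-1} = I_{2n} - (\xi_r - I_{2n})$, and reading off the columns shows that $\xi_r^{-1}W_0$ is the column span of $\begin{pmatrix} I_{n_1} & 0 \\ 0 & I_{n_2} \\ 0 & -\widetilde{I_r} \\ -{}^t\widetilde{I_r} & 0 \end{pmatrix}$, whence $\dim(W\cap V_2) = n_2 - r$. Thus the $\xi_r$ meet all $n_2+1$ orbits and lie in distinct ones.

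For Part~(2), the condition $P_{n,0}\xi_r\iota(g_1,g_2) = P_{n,0}\xi_r$ is equivalent to $\xi_r\iota(g_1,g_2)\xi_r^{-1}\in P_{n,0}$, i.e. the lower-left $n\times n$ block of this conjugate vanishes. I would expand this using the explicit form of $\xi_r$ and the $2\times2$-block shapes of $g_1,g_2$; the resulting equations force $g_1$ to preserve $A_1$ and $g_2$ to preserve $A_2$, that is $g_1 \in P_{n_1,r}$ and $g_2\in L_{n_2,r}U_{n_2,r}\,s_{n_2,r}(G_r)$, while the coupling blocks $\widetilde{I_r}$ and ${}^t\widetilde{I_r}$ identify the two Levi actions on the common $r$-dimensional graph. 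Because ${}^t\widetilde{I_r}$ interchanges the upper (``$A$'') and lower (``$D$'') coordinate halves, the matched $G_r$-components differ by the swap $g\mapsto g^\natural$, giving $g_1 = h_1\,s_{n_1,r}(g)$ and $g_2 = h_2\,s_{n_2,r}(g^\natural)$. The ``in particular'' clause is then formal: this stabilizer surjects onto $P_{n_1,r}$ in the first factor with kernel $\{e\}\times L_{n_2,r}U_{n_2,r}$, so $(g_1,g_2)\mapsto \xi_r\iota(g_1,g_2)$ descends to a bijection from $\bigl(P_{n_1,r}\backslash G_{n_1}\bigr)\times\bigl(L_{n_2,r}U_{n_2,r}\backslash G_{n_2}\bigr)$ onto $P_{n,0}\backslash P_{n,0}\xi_r G_{n_1,n_2}$, injectivity following because the $P_{n_1,r}$-coset of $g_1$ pins down $g$ and hence, up to $L_{n_2,r}U_{n_2,r}$, the factor $g_2$.

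The main obstacle is the bookkeeping in this last computation, and in particular tracing the origin of the twist $g\mapsto g^\natural$: one must keep careful track of how $\xi_r$ glues the upper coordinates of one factor to the lower coordinates of the other, since it is exactly this transposition that forces the $G_r$-action on $V_2$ to be the $\natural$-conjugate of the one on $V_1$. I would tame it by first changing to coordinates adapted to the decomposition $A_1\oplus A_2\oplus(\text{graph})$, in which $\xi_r$ becomes block-diagonal apart from the single $I_r$ coupling, making both the stabilizer equations and the matching of the Levi factors transparent.
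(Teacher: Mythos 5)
The paper gives no proof of this Fact at all---it is quoted as well known with a pointer to Garrett \cite{Garrett1987Integral}---so the only meaningful comparison is with the standard argument of that reference, and your proposal is a correct reconstruction of exactly that argument: identify $P_{n,0}\backslash G_{n}$ with the Lagrangian Grassmannian of $(K^{2n},J_n)$ via Witt's theorem, classify the $\rmu(V_1)\times\rmu(V_2)$-orbits by the single invariant $r(W)=n_2-\dim(W\cap V_2)$ (your verification that $r(\xi_r^{-1}W_0)=r$ and the dimension identity $\dim(W\cap V_1)-\dim(W\cap V_2)=n_1-n_2$ both check out), and read off the stabilizer for part (2). Two minor remarks: the quotients $p_i(W)/A_i$ carrying your anti-isometry are $2r$-dimensional (the hyperbolic skew-hermitian space underlying $\rmu_r$), not $r$-dimensional as written; and in part (2) the decisive computation is only sketched, though its predicted outcome is correct---the stabilizer of $\xi_r^{-1}W_0$ is precisely $\left\{\left(h_1\,s_{n_1,r}(g),\,h_2\,s_{n_2,r}(g^\natural)\right)\right\}$, and the induced matching of the two Levi actions on $A_i^\perp/A_i$ is conjugation by $-\left(\begin{smallmatrix}0&I_r\\I_r&0\end{smallmatrix}\right)$, which is indeed $g\mapsto g^\natural$, so your diagnosis of where the $\natural$-twist originates is exactly right, as is the coset-counting in the ``in particular'' clause via the kernel $\{e\}\times L_{n_2,r}U_{n_2,r}$ of the projection of the stabilizer onto $P_{n_1,r}$.
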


\subsection{Pullback Formula}
From now on, we assume $n_1=n_2$ if $\frakn \neq \inte{\kp}$.
We fix an element $\theta=(\theta_v)\in \calk_{n,0}$ as
\[\theta_v=\left\{
	\begin{array}{ll}
		I_{2n}    & (v\nmid \frakn), \\
		\xi_{n_2} & (v\mid \frakn).
	\end{array}
	\right.\]

Let $\chi_{K}$ be the quadratic character associated to quadratic extension $K/\kp$.
For a Hecke eigenform $f\in \mathcal{A}_{0,\rho_{n}}(\rmu_{n})$
and a Hecke character $\eta$ of $K$, we set
\[D(s,f; \eta)
	=L(s-n+1/2,f\otimes \eta,\mathrm{St})
	\cdot\left(\prod_{i=0}^{2n-1} L_{\kp}(2s-i,\eta\cdot\chi_{K}^{i})\right)^{-1},
\]
where $L(*,f\otimes {\eta}, \mathrm{St})$ is the standard L-function attached
to $f\otimes {\eta}$ and $L_{\kp}(*,\eta)$ (resp. $L_{\kp}(*,\eta\cdot\chi_{K})$) is
the Hecke L-function attached to $\eta$ (resp. $\eta\cdot\chi_{K}$).
For a finite set $S$ of finite places $v$, we put
\[D_S(s,f;\eta)=\prod_{\substack{v\in\bfh- S}}D_v(s,f;\eta),\]
where $D_v(s,f;\eta)$ is a v-part of $D(s,f; \eta)$.

In this section, we prove the pullback formula.
Let $\dkl$ be the differential operator satisfying the condition (A) for $\det^\kappa$ and $\rho'_{n_1}\otimes\rho_{n_2}$.
We fix a Hecke eigenform $f=\prod_v f_v\in \mathcal{A}_{n_2}(\rho_{n_2},\frakn)$.

From Fact~\ref{fact:decomp} and the definition of the hermitian Eisenstein series, we
have
\[
	(\dkl E^\theta_{n,\kappa})(\iota(g_1,g_2),s; \frakn,  \chi)
	=\sum_{r=0}^{n_2}
	\sum_{\gamma_1\in P_{n_1,r}\backslash G_{n_1}}
	\sum_{\gamma_2\in P_{n_2,r}\backslash G_{n_2}}
	W^\theta_r(\gamma_1g_1,\gamma_2g_2, s; \frakn,  \chi),
\]
where
\[W^\theta_r(g_1,g_2 ,s; \frakn,  \chi)=\sum_{\gamma_2'\in G_{n_2,r}}
	(\dkl\epsilon_{n,\kappa})\left(\xi_r\,\iota(g_1,\gamma_2'g_2)\theta^{-1}  ,s; \frakn,  \chi\right)\]
for $(g_1,g_2)\in G_{n_1,\adele}\times G_{n_2,\adele}$.

\begin{prop}
	For any Hecke eigenform $f\in \mathcal{A}_{0,\rho_{n_2}}(\rmu_{n_2})$ and $r<n_2$, we have
	\[	\int_{ G_{n_2}\backslash G_{n_2,\adele}}\left<f(g_2) , \sum_{\gamma_2\in P_{n_2,r}\backslash G_{n_2}}W^\theta_{r}(g_1,\gamma_2g_2 ,s; \frakn, \chi)\right>dg_2=0.\]
\end{prop}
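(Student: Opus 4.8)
The plan is to prove the vanishing by the usual unfolding of the Eisenstein sum against the cusp form $f$, after which the inner integral becomes a constant term along a \emph{proper} parabolic and dies by cuspidality. Set $\Phi(h):=(\dkl\epsilon_{n,\kappa})(\xi_r\iota(g_1,h)\theta^{-1},s)$. First I would reassemble the double sum: by Fact~\ref{fact:decomp}(2) the outer sum over $\gamma_2\in P_{n_2,r}\backslash G_{n_2}$ together with the internal sum over $G_{n_2,r}$ defining $W^\theta_r$ recombines into a single sum over $\gamma\in(L_{n_2,r}U_{n_2,r})\backslash G_{n_2}$, so that
\[
\sum_{\gamma_2\in P_{n_2,r}\backslash G_{n_2}}W^\theta_r(g_1,\gamma_2 g_2,s)
=\sum_{\gamma\in (L_{n_2,r}U_{n_2,r})\backslash G_{n_2}}\Phi(\gamma g_2).
\]
Since $\Re(s)>n$ the relevant series converge absolutely, and since $f$ is left $G_{n_2}(\kp)$-invariant, the standard folding identity turns $\int_{G_{n_2}\backslash G_{n_2,\adele}}\langle f(g_2),\sum_\gamma\Phi(\gamma g_2)\rangle\,dg_2$ into $\int_{(L_{n_2,r}U_{n_2,r})(\kp)\backslash G_{n_2,\adele}}\langle f(g_2),\Phi(g_2)\rangle\,dg_2$, which I would then write as an iterated integral whose innermost integration is over $U_{n_2,r}(\kp)\backslash U_{n_2,r}(\adele)$.

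The substantive step is to show that $\Phi$ is left $U_{n_2,r}(\adele)$-invariant. For $u\in U_{n_2,r}(\adele)$, since $\iota$ is a homomorphism, $\xi_r\iota(g_1,uh)\theta^{-1}=\bigl(\xi_r\iota(1,u)\xi_r^{-1}\bigr)\,\xi_r\iota(g_1,h)\theta^{-1}$; taking $g_1=1$ and the inner $G_r$-parameter trivial in Fact~\ref{fact:decomp}(2) gives $\xi_r\iota(1,u)\xi_r^{-1}\in P_{n,0}$, and this being an identity of algebraic groups it persists over $\adele$; as a conjugate of a unipotent element it is unipotent. Now $\epsilon_{n,\kappa}$ transforms under left multiplication by $P_{n,0}(\adele)$ through a character depending only on the Levi ($t_{n,0}(A)$) component — the value $\norm{\det(\adj{A}A)}^{s}\chi(\det A)$ at the finite places and $\norm{\delta}^{\kappa-2s}\delta^{-\kappa}$ at the infinite places (where $\delta(g)=\det(C\bfi_n+D)$ is unchanged by left multiplication by an upper unitriangular matrix) — and in particular this character is trivial on unipotent elements. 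Finally $\dkl$ is realized on $G_{n}$ by right differentiation through $\mathfrak{p}_n^+$ (Lemma~\ref{lem:pideri}, Theorem~\ref{thm:mainweight}), hence commutes with left translation and preserves left $P_{n,0}$-equivariance; thus the invariance descends from $\epsilon_{n,\kappa}$ to $\Phi$.

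Given this, I would pull the $U_{n_2,r}(\adele)$-invariant factor $\Phi(g_2)$ out of the inner integral, leaving $\int_{U_{n_2,r}(\kp)\backslash U_{n_2,r}(\adele)}f(u g_2)\,du$ paired with $\Phi(g_2)$ (the Hermitian pairing being continuous, the integral passes through it). This inner integral is exactly the constant term of $f$ along $P_{n_2,r}$; because $r<n_2$ the Levi of $P_{n_2,r}$ contains the nontrivial factor $\mathrm{GL}_{n_2-r}(K)$, so $P_{n_2,r}$ is a proper parabolic with unipotent radical $U_{n_2,r}$, and this constant term vanishes by the cuspidality of $f$. Hence the entire integral is zero.

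The main obstacle I anticipate is precisely the invariance step: verifying that $\dkl$ genuinely commutes with left translation by $U_{n_2,r}(\adele)$ and that the conjugate $\xi_r\iota(1,u)\xi_r^{-1}$ introduces no nontrivial automorphy factor. Once this compatibility between the differential operator and the unipotent translation is secured (together with the identification of the innermost domain with the full unipotent radical, so that the cuspidality definition applies verbatim), the vanishing is immediate; everything else is the standard measure-theoretic unfolding, justified by absolute convergence for $\Re(s)>n$.
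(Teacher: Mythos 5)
Your proposal is correct and follows essentially the same route as the paper: unfold the double sum into a single integral over $L_{n_2,r}U_{n_2,r}\backslash G_{n_2,\adele}$, identify the innermost integral as the constant term of $f$ along the unipotent radical $U_{n_2,r}$ of the proper parabolic $P_{n_2,r}$, and conclude by cuspidality. The only difference is that where the paper asserts via ``a direct computation'' that the integrand is left $U_{n_2,r,\adele}$-quasi-invariant up to a constant independent of $u$, you supply the argument (and in fact show the constant is $1$) by conjugating $\xi_r\,\iota(1,u)\,\xi_r^{-1}$ into $P_{n,0}$, noting it is unipotent so the $P_{n,0}$-character is trivial, and observing that $\dkl$, acting by right differentiation, commutes with left translation.
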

\begin{proof}
	We have
	\begin{align*}
		 & \int_{ G_{n_2}\backslash G_{n_2,\adele}}\left<f(g_2) , \sum_{\gamma_2\in P_{n_2,r}\backslash G_{n_2}}W^\theta_{r}(g_1,\gamma_2g_2 ,s; \frakn, \chi)\right>dg_2               \\
		 & \quad=	\int_{ L_{n_2,r}U_{n_2,r}\backslash G_{n_2,\adele}}\left<f(g_2) ,(\dkl\epsilon_{n,\kappa})\left(\xi_r\,\iota(g_1,g_2)\theta^{-1}  ,s; \frakn,  \chi\right)\right>dg_2 \\
		 & \quad=	\int_{ L_{n_2,r}U_{n_2,r,\adele}\backslash G_{n_2,\adele}}\int_{U_{n_2,r}\backslash U_{n_2,r,\adele}}
		\left<f(ug) ,(\dkl\epsilon_{n,\kappa})\left(\xi_r\,\iota(g_1,ug)\theta^{-1}  ,s; \frakn,  \chi\right)\right>dudg.
	\end{align*}Since
	\[
		\xi_r\,\iota(I_{n_1},u)\,\xi_r^{-1} =
		\begin{pmatrix}
			I_{n_1}                                                & 0 & 0       & 0 \\[1mm]
			\begin{pmatrix} 0 & * \\ 0 & 0 \end{pmatrix}           &
			\begin{pmatrix} I_{n_2-r} & * \\ 0 & I_r \end{pmatrix} & 0 &
			\begin{pmatrix} * & * \\ * & 0 \end{pmatrix}                             \\[1mm]
			0                                                      & 0 & I_{n_1} &
			\begin{pmatrix} 0 & 0 \\ * & 0 \end{pmatrix}                             \\[1mm]
			0                                                      & 0 & 0       &
			\begin{pmatrix} I_{n_2-r} & 0 \\ * & I_r \end{pmatrix}
		\end{pmatrix}
		\in P_{n,0,\adele}
	\]
	for any $u\in U_{n_2,r,\adele}$,
	we have
	\[
		\epsilon_{n,\kappa}\Bigl(\xi_r\,\iota(I_{n_1},u)\,\xi_r^{-1},\, s; \frakn, \chi\Bigr) = 1
		\quad \text{and} \quad
		\Delta\bigl(\xi_r\,\iota(I_{n_1},u)\,\xi_r^{-1}\bigr) = \bfi_n.
	\]
	Hence,
	\[
		(\dkl\epsilon_{n,\kappa})\Bigl(\xi_r\,\iota(g_1, u g)\,\theta^{-1},\, s; \frakn, \chi\Bigr)
		= c \cdot (\dkl\epsilon_{n,\kappa})\Bigl(\xi_r\,\iota(g_1, g)\,\theta^{-1},\, s; \frakn, \chi\Bigr),
	\]
	with some constant \(c \in \mathbb{C}\) which depends on \(\dkl\) but does not depend on \(u \in U_{n_2,r,\mathbb{A}}\).

	Therefore, the last integral is equal to
	\[\overline{c}\cdot\int_{ L_{n_2,r}U_{n_2,r,\adele}\backslash G_{n_2,\adele}}
		\left<\int_{U_{n_2,r}\backslash U_{n_2,r,\adele}}f(ug)du ,(\dkl\epsilon_{n,\kappa})\left(\xi_r\,\iota(g_1,g)\theta^{-1}  ,s; \frakn,  \chi\right)\right>dg.\]
	Since $f$ is a cusp form and $U_{n_2,r}$ is a unipotent radical of a proper parabolic subgroup,
	this integral is equal to 0.
\end{proof}
\begin{rem}
	These computations are (essentially) known. See, for example \cite[Satz 2]{Klingen1967Zum}.
\end{rem}
By this proposition, we consider only the case $r=n_2$.
We have
\begin{align*}
	(f ,W^\theta_{n_2}(g_1,*,\overline{s} ; \frakn,  \chi))
	 & =\int_{ G_{n_2}\backslash G_{n_2,\adele}}\left<f(g_2),W^\theta_{n_2}(g_1,g_2,\overline{s} ; \frakn,  \chi)\right>dg_2                                  \\
	 & =\int_{G_{n_2,\adele}}\left<f(g_2),(\dkl\epsilon_{n,\kappa})\left(\xi_{n_2}\,\iota(g_1,g_2)\theta^{-1},\overline{s} ; \frakn,  \chi\right)\right>dg_2.
\end{align*}
Therefore, the last integral can be decomposed into a product of local factors.

\subsection{Local Computations}
\subsubsection{Good Non-archimedean Factors}
Let $v$ be a finite place of $\kp$ that corresponds to a prime ideal $\mathfrak{p}$ of $\kp$ such that $\mathfrak{p}\nmid\frakn$.
For $g_{1,v}=t_{n_1,n_2}(A_1)\mu_1 s_{n_1,n_2}(h) k_1 \in G_{n_1,v}$
with $A_1\in \mathrm{GL}_{n_1-n_2}(K_v)$, $\mu_1\in U_{n_1,n_2,v}$, $h\in G_{n_2,v}$ and $k_1\in \calk_{n_1,v}$,
we have
\begin{align*}
	  & \quad\int_{G_{n_2,v}}f_v(g_2)\,\overline{\epsilon_{n,\kappa,v}\left(\xi_{n_2}\,\iota(g_{1,v},g_2),\overline{s}; \frakn,  \chi\right)}\,dg_2 \\
	= & \abs{\det \left(\adj{A_1}A_1\right)}_v^{s}\overline{\chi_v}(\det A_1^*)
	\int_{G_{n_2,v}}f_v(g_2)\,\overline{\epsilon_{n,\kappa,v}\left(\xi_{n_2}\,\iota(s_{n_1,n_2}(h),g_2),\overline{s}; \frakn, \chi\right)}\,dg_2    \\
	= & \abs{\det \left(\adj{A_1}A_1\right)}_v^{s}\overline{\chi_v}(\det A_1^*h)
	\int_{G_{n_2,v}}f_v^\natural(h{g}^{-1})\,\chi_v(\det (g))\overline{\epsilon_{n,\kappa,v}\left(\xi_{n_2}\,\iota(s_{n_1,n_2}(g),I_{n_2}),\overline{s}; \frakn, \chi\right)}\,dg.
\end{align*}
We put $\eta(g)=\chi_v(\det(g))\overline{\epsilon_{n,\kappa,v}\left(\xi_{n_2}\,\iota(s_{n_1,n_2}(g),I_{n_2}),\overline{s}; \frakn, \chi_v\right)}$.
Since
\begin{align*}
	\eta(kgk')
	 & = \chi_v(\det(kgk'))\,
	\overline{\epsilon_{n,\kappa,v}\!\left(
		\xi_{n_2}\,\iota\bigl(s_{n_1,n_2}(kgk'), I_{n_2}\bigr),
		\overline{s}; \frakn, \chi_v
	\right)}                  \\
	 & = \chi_v(\det(gk'))\,
	\overline{\epsilon_{n,\kappa,v}\!\left(
		\xi_{n_2}\,\iota\bigl(s_{n_1,n_2}(gk'), {k^\natural}^{-1}\bigr),
		\overline{s}; \frakn, \chi_v
	\right)}                  \\
	 & = \eta(g),
\end{align*}
for any $k, k' \in G_{n_2}(\mathcal{O}_v)$ (noting that $\chi_v$ is unramified),
it follows that $\eta(g)$ is left and right $G_{n_2}(\mathcal{O}_v)$-invariant.

So, it can be written as a limit of elements of the Hecke algebra $\mathcal{H}_v(G_{n_2,v},G_{n_2}(\mathcal{O}_v))$,
consisting of compactly supported functions on $G_{n_2,v}$ that are bi-invariant under $G_{n_2}(\mathcal{O}_v)$.

Since $f_v$ is an $v$-eigenfunction,
there is a constant $S_v(f_v)$ such that
\[\int_{G_{n_2,v}}f_v^\natural(h{g}^{-1})\,\eta(g)\,dg=S_v(f_v)f_v^\natural(h).\]

We review the Satake homomorphism \cite{Satake1963Theory} to determine the constant $S_v(f_v)$.

\vskip.5\baselineskip

Let $G$ be a reductive linear algebraic group over $\mathfrak{p}$-adic field $F_\mathfrak{p}$
and take a maximal open compact subgroup $\calk$.
We denote by $\mathcal{H}_\mathfrak{p}(G,\calk)$ the Hecke algebra of a pair $(G,\calk)$.
Let $T$ be a maximal $F_\mathfrak{p}$ split torus in $G$, $M$ the centralizer of $T$ in $G$,
$B$ a minimal parabolic subgroup of $G$ containing $M$,
and $U$ the unipotent radical of $B$.
Let $du$ and $dm$ be the left Haar measures on U and M, respectively,
normalized so that the volumes of $U\cap \calk$ and $M\cap \calk$ are 1.
Let $\delta_M: M\rightarrow \bbr^\times$ be the modular function on $M$.
Let $W_T:=N_T/M$ where $N_T$ is the normalizer of $T$ in $G$ be the Weyl group of $T$ in $G$.
$W_T$ acts on $\mathcal{H}_\mathfrak{p}(M,M\cap \calk)$ as
\[w\cdot f(m)=f^w(m)=f(wmw^{-1})\]
for $w \in W_T$ and $f \in \mathcal{H}_\mathfrak{p}(G,\calk)$.
\begin{prop}[\cite{Satake1963Theory}]
	The map $S_U:\mathcal{H}_\mathfrak{p}(G,\calk)\rightarrow\mathcal{H}_\mathfrak{p}(M,M\cap \calk)^{W_T}$ given by
	\[S_Uf(m)=\delta_M^{-\frac{1}{2}}(m)\int_Uf(um)du=\delta_M^{\frac{1}{2}}(m)\int_Uf(mu)du\]
	is an algebra isomorphism.
\end{prop}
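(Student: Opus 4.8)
The plan is to prove the four assertions implicit in the statement, in order: that $S_U$ does land in $\mathcal{H}_\mathfrak{p}(M,M\cap K)$ and its two defining formulas agree; that $S_U$ is an algebra homomorphism; that its image consists of $W_T$-invariant functions; and that $S_U$ is a bijection onto $\mathcal{H}_\mathfrak{p}(M,M\cap K)^{W_T}$. The two structural inputs used throughout are the Iwasawa decomposition $G=UMK$, which lets me integrate over $G$ by integrating successively over $U$, $M$, and $K$ with the measure Jacobian recorded by $\delta_M$, and the Cartan decomposition $G=\coprod_\lambda Ka_\lambda K$ indexed by dominant cocharacters $\lambda$, whose double-coset characteristic functions $\mathbbm{1}_{Ka_\lambda K}$ form a $\bbc$-basis of $\mathcal{H}_\mathfrak{p}(G,K)$. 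First I would check well-definedness: since $\delta_M$ is trivial on the compact subgroup $M\cap K$, and the $K$-biinvariance of $f$ together with the substitution $u\mapsto kuk^{-1}$ (for $k\in M\cap K$, which normalizes $U\cap K$) leaves $\int_U f(um)\,du$ unchanged, the function $S_Uf$ is bi-$(M\cap K)$-invariant and hence lies in $\mathcal{H}_\mathfrak{p}(M,M\cap K)$. The equality of the two formulas is a single change of variables: writing $mu=(mum^{-1})m$ and substituting $u'=mum^{-1}$ introduces the Jacobian $\delta_M(m)$ relating $\int_U f(mu)\,du$ to $\int_U f(u'm)\,du'$, which is exactly what converts the prefactor $\delta_M^{1/2}$ into $\delta_M^{-1/2}$.

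Next I would verify the homomorphism property $S_U(f_1\ast f_2)=S_Uf_1\ast S_Uf_2$. Unfolding the definitions gives $S_U(f_1\ast f_2)(m)=\delta_M^{-1/2}(m)\int_U\int_G f_1(g)\,f_2(g^{-1}um)\,dg\,du$, and I substitute Iwasawa coordinates $g=u_1m_1k$ in the inner $G$-integral. The Iwasawa measure contributes the modular Jacobian in $m_1$; the $K$-biinvariance of $f_1$ and $f_2$ eliminates the $k$-integration; and after translating the $u$- and $u_1$-variables the double $U$-integration factors as the product of the $U$-integrals defining $S_Uf_1$ and $S_Uf_2$. The surviving $M$-integration is then precisely the convolution on $M$, provided the powers of $\delta_M$ match, and they do exactly because each $S_U$ carries $\delta_M^{-1/2}$ while the Iwasawa Jacobian carries one full power of $\delta_M$. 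This is the step that pins down the normalization: with any exponent other than $1/2$, an uncancelled power of the Jacobian survives and destroys multiplicativity.

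For the $W_T$-invariance of the image I would show that $S_U$ is independent of the choice of minimal parabolic $B\supseteq M$, equivalently of its unipotent radical $U$. Since $W_T$ acts simply transitively on the minimal parabolics containing $M$, and replacing $U$ by $wUw^{-1}$ transforms the Satake map by the corresponding $W_T$-action on $\mathcal{H}_\mathfrak{p}(M,M\cap K)$, the independence statement $S_{wUw^{-1}}=S_U$ is literally $W_T$-invariance of $S_U f$. Independence itself reduces, via the standard passage through parabolics of semisimple rank one, to a rank-one computation comparing the integrals over $U$ and over the opposite $U^-$, which one settles using that $f$ is supported on finitely many $K$-double cosets.

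Finally, bijectivity follows from a triangularity estimate with respect to the dominance order on the lattice $\Lambda:=M/(M\cap K)$. Evaluating $S_U(\mathbbm{1}_{Ka_\lambda K})$ at $a_\mu(M\cap K)$ via the Iwasawa decomposition yields a nonnegative combination of powers of the residue cardinality $q$ that vanishes unless $\mu\leq\lambda$ and whose value at $\mu=\lambda$ is a single nonzero power of $q$; thus $S_U(\mathbbm{1}_{Ka_\lambda K})$ is a nonzero multiple of $\mathbbm{1}_{a_\lambda(M\cap K)}$ plus strictly lower terms. Comparing this against the basis of $\mathcal{H}_\mathfrak{p}(M,M\cap K)^{W_T}$ given by the $W_T$-orbit sums of the $\mathbbm{1}_{a_\lambda(M\cap K)}$ (again indexed by dominant $\lambda$) shows that $S_U$ is upper-triangular with invertible diagonal, yielding both injectivity and surjectivity onto the invariants. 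I expect the genuine obstacle to be the $W_T$-invariance, i.e. the independence-of-$U$ argument: the homomorphism and triangularity computations are essentially bookkeeping once the two decompositions are in hand, whereas independence of the Weyl chamber requires the rank-one reduction and a careful comparison of the opposite unipotent integrals.
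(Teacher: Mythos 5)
The paper itself gives no proof of this proposition: it is quoted from Satake \cite{Satake1963Theory} as a known input, so there is no internal argument to compare yours against. Your outline does follow the classical route of Satake and Cartier (well-definedness, multiplicativity via the Iwasawa decomposition, $W_T$-invariance via independence of the choice of $U$, bijectivity via triangularity for the dominance order on $M/(M\cap K)$), and the well-definedness, change-of-variables, and triangularity steps are correct as sketched. But your justification of the homomorphism property contains a genuine error: multiplicativity does \emph{not} pin down the exponent $1/2$. In the coordinates $p=mu$ the left Haar measure on $P=MU$ is simply $dm\,du$, and the Jacobian $\delta_M(m_1)$ arising from conjugating the $u$-variable cancels exactly against the measure factor from the Iwasawa decomposition; carried out correctly, your computation shows that the \emph{unnormalized} constant term $f\mapsto\int_U f(mu)\,du$ is already an algebra homomorphism $\mathcal{H}_\mathfrak{p}(G,K)\to\mathcal{H}_\mathfrak{p}(M,M\cap K)$. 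More structurally, if $T$ is any such algebra homomorphism and $\chi$ is a character of $M$ trivial on $M\cap K$, then $\chi\cdot T$ is again one, because $\chi(m_1)\chi(m_1^{-1}m)=\chi(m)$ inside the convolution; since $\delta_M^{s}$ is such a character for \emph{every} $s$, no choice of exponent can "destroy multiplicativity", contrary to your claim. What actually forces $s=1/2$ is the requirement that the image consist of genuinely $W_T$-invariant functions: for $s\neq 1/2$ the image lies in a $\delta_M^{\,s-1/2}$-shifted (twisted) set of invariants instead.

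This makes the gap in your argument sharper than you acknowledge: the $W_T$-invariance step, which you rightly identify as the crux and which is the only place the normalization $1/2$ is tested, is merely gestured at. The reduction to parabolics of semisimple rank one is standard, but the rank-one comparison of $\int_U$ with the opposite integral $\int_{U^-}$ — the actual content — is not carried out, and "$f$ is supported on finitely many $K$-double cosets" is not by itself an argument. As written, the heart of the theorem (image in $\mathcal{H}_\mathfrak{p}(M,M\cap K)^{W_T}$, hence also the identification of the target in your surjectivity step) is unproved. One further caveat you inherit from the statement itself: the Cartan decomposition $G=\coprod_\lambda K a_\lambda K$ with dominant representatives, on which your triangularity argument rests, requires $K$ to be a \emph{special} maximal compact subgroup; for an arbitrary maximal compact the isomorphism can fail, so your proof implicitly uses this hypothesis and should say so.
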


\vskip.5\baselineskip

Returning to our setting, we assume that $v$ is non-split (i.e., $\mathfrak{p}$ is inert or ramified).

As the centralizer $M$ of maximal $\kp_v$-split torus, we take
\[M=\left\{t=\begin{pmatrix}
		t_1 &        &         &                     &        &                         \\
		    & \ddots &         &                     &        &                         \\
		    &        & t_{n_2} &                     &        &                         \\
		    &        &         & \overline{t_1}^{-1} &        &                         \\
		    &        &         &                     & \ddots &                         \\
		    &        &         &                     &        & \overline{t_{n_2}}^{-1} \\
	\end{pmatrix}\right\}.\]
Let $B$ be the minimal parabolic subgroup of $G$ consisting of upper triangular
matrices and $N$ the unipotent radical of $B$. In this case, we have the
natural identification $\mathcal{H}_\mathfrak{p}(M,M\cap
	\calk_{n_2,v})\cong\bbc[T_1,T_1^{-1},\cdots,T_{n_2},T_{n_2}^{-1}]$.

Let $\varpi_v$ be a prime element of $K_v$.
The double coset $\calk_{n_2,v}\backslash
	G_{n_2,v}/\calk_{n_2,v}$ has an irredundant set of representatives
\[\left\{\left.\varpi_{d_1, \ldots, d_{n_2}}=\begin{pmatrix}
		\varpi_v^{d_1} &        &                    &                            &        &                                \\
		               & \ddots &                    &                            &        &                                \\
		               &        & \varpi_v^{d_{n_2}} &                            &        &                                \\
		               &        &                    & \overline{\varpi_v}^{-d_1} &        &                                \\
		               &        &                    &                            & \ddots &                                \\
		               &        &                    &                            &        & \overline{\varpi_v}^{-d_{n_2}} \\
	\end{pmatrix}\right|d_1\geq\cdots\geq d_{n_2}\right\}.\]

We put
\[\varpi_+=\mathrm{diag}({\varpi_v^{d_1}, \ldots,\varpi_v^{d_k}, 1, \ldots,1})\]
and \[\varpi_-=\mathrm{diag}(1,\ldots,1,\varpi_v^{d_{k+1}},	\ldots,\varpi_v^{d_{n_2}})\]
for an element $\varpi=\varpi_{d_1, \ldots, d_{n_2}} \ (d_1\geq\cdots\geq	d_k\geq 0 > d_{k+1}\geq\cdots\geq d_{n_2})$ of the set.
Then, we have
\begin{align*}
	  & \xi_{n_2}\,  \iota(s_{n_1,n_2}(\varpi),I_{n_2}) \\
	= & {\small
			\begin{pmatrix}
				I_{t} & 0        & 0                        & 0     & 0                        & 0                \\
				0     & \varpi_+ & 0                        & 0     & 0                        & \varpi_+\varpi_- \\
				0     & 0        & \overline{\varpi_-}^{-1} & 0     & I_{n_2}                  & 0                \\
				0     & 0        & 0                        & I_{t} & 0                        & 0                \\
				0     & 0        & 0                        & 0     & \overline{\varpi_+}^{-1} & 0                \\
				0     & 0        & 0                        & 0     & 0                        & \varpi_-
			\end{pmatrix}
			\begin{pmatrix}
				I_{t} & 0                    & 0                                          & 0     & 0                        & 0             \\
				0     & \varpi_-(1-\varpi_+) & 0                                          & 0     & 0                        & -I_{n_2}      \\
				0     & 0                    & \overline{\varpi_-}(1-\overline{\varpi_+}) & 0     & -I_{n_2}                 & 0             \\
				0     & 0                    & 0                                          & I_{t} & 0                        & 0             \\
				0     & 0                    & \overline{\varpi_+}                        & 0     & \overline{\varpi_-}^{-1} & 0             \\
				0     & \varpi_+             & 0                                          & 0     & 0                        & \varpi_-^{-1}
			\end{pmatrix}}
\end{align*}
where $t=n_1-n_2$,
and the right matrix of the product is an element of $\calk_{n_2,v}$.
Thus, we have
\[\eta(\varpi)=\abs{\det(\varpi^2_+\varpi_-^{-2})}_v^{s}\overline{\chi_v}(\det(\varpi_+\varpi_-^{-1})).\]
Here we note that $\eta$ is left and right $G_{n_2}(\mathcal{O}_v)$-invariant.
When $v$ is split, $G_{n_2,v}$ is isomorphic to $\mathrm{GL}_{2n_2}(\kp_v)$. If
$M$, $B$, and $N$ are defined similarly, we have the natural
identification $\mathcal{H}_\mathfrak{p}(M,M\cap
	\calk_{n_2,v})\cong\bbc[T_1,T_1^{-1},\cdots,T_{2n_2},T_{2n_2}^{-1}]$ and the
similar calculation can also be made as in the non-split case.

Applying \cite[Theorem~19.8]{shimura2000arithmeticity}, the following
holds.
\begin{thm} Under the Satake isomorphism, we have
	\[S_N\eta=\left\{
		\begin{array}{ll}
			\dfrac{\prod_{i=1}^{2n_2}(1-(-1)^{i-1}q^{i-1-2s}\overline{\chi_v}(\mathfrak{p}))}
			{\prod_{i=1}^{n_2}(1-q^{2n_2-2s-2}\,\overline{\chi_v}(\mathfrak{P})T_i)(1-q^{2n_2-2s}\,\overline{\chi_v}(\mathfrak{P})T_i^{-1})}
			 & (\text{inert; } \mathfrak{p}=\mathfrak{P}),                 \\[15pt]
			\dfrac{\prod_{i=0}^{n_2-1}(1-q^{2i-2s}\overline{\chi_v}(\mathfrak{p}))}
			{\prod_{i=1}^{n_2}(1-q^{n_2-s-1}\,\overline{\chi_v}(\mathfrak{P})T_i)(1-q^{n_2-s}\,\overline{\chi_v}(\mathfrak{P})T_i^{-1})}
			 & (\text{ramified; } \mathfrak{p}=\mathfrak{P}^2),            \\[15pt]
			\displaystyle\prod_{i=1}^{2n_2}\dfrac{1-q^{i-1-2s}\,\overline{\chi_v}(\mathfrak{p})}
			{(1-q^{2n_2-s}\,\overline{\chi_{v}}(\mathfrak{P_1})T_i^{-1})(1-q^{-1-s}\,\overline{\chi_{v}}(\mathfrak{P_2})T_i)}
			 & (\text{split; } \mathfrak{p}=\mathfrak{P}_1\mathfrak{P}_2),
		\end{array}
		\right.
	\]
	where $q=N_{\kp/\bbq}(\mathfrak{p})$.
\end{thm}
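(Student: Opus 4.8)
The plan is to recognise $\eta$ as the restriction to $G_{n_2,v}$, through the doubling embedding $g\mapsto\xi_{n_2}\,\iota(s_{n_1,n_2}(g),I_{n_2})$, of a local section of the degenerate principal series defining $\epsilon_{n,\kappa,v}$, and then to reduce the computation of its Satake transform to the local calculation of Shimura \cite[Theorem~19.8]{shimura2000arithmeticity}. Since $\eta$ is bi-$K_{n_2,v}$-invariant, it is completely determined by its values on the representatives $\varpi_{d_1,\ldots,d_{n_2}}$, which we have already found to be $\abs{\det(\varpi_+^2\varpi_-^{-2})}_v^{s}\,\overline{\chi_v}(\det(\varpi_+\varpi_-^{-1}))$. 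Hence $S_N\eta$ is a well-defined $W_T$-invariant element of $\mathcal{H}_{\mathfrak{p}}(T,T\cap K_{n_2,v})\cong\bbc[T_1,T_1^{-1},\ldots,T_{n_2},T_{n_2}^{-1}]$, and it suffices to evaluate the Satake integral $S_N\eta(m)=\delta_M^{-1/2}(m)\int_N\eta(nm)\,dn$.

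First I would unfold this integral over the unipotent radical $N$ along its root subgroups, integrating against the explicit quasi-character computed above. This is precisely the local zeta integral of the doubling method, whose evaluation is the content of Shimura's theorem; so rather than carrying out the unfolding by hand I would match our data to his. Concretely, I would identify the spectral parameter $s$, the residue field size $q=N_{\kp/\bbq}(\mathfrak{p})$, and the Hecke character, being careful that the complex conjugation appearing when we pass from $\epsilon_{n,\kappa,v}$ to $\eta$ forces the character to enter as $\overline{\chi_v}$.

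The computation then branches according to the behaviour of $\mathfrak{p}$ in $K/\kp$: inert ($\mathfrak{p}=\mathfrak{P}$), ramified ($\mathfrak{p}=\mathfrak{P}^2$), and split ($\mathfrak{p}=\mathfrak{P}_1\mathfrak{P}_2$). In each case the denominator assembles the standard $L$-factor in the Satake parameters $T_i$ of $G_{n_2}$, while the numerator collects the abelian factors $\prod_i(1-(-1)^{i-1}q^{i-1-2s}\overline{\chi_v}(\mathfrak{p}))$ and their ramified and split analogues. In the split case I would use the isomorphism $G_{n_2,v}\cong\mathrm{GL}_{2n_2}(\kp_v)$ and its Borel, as the text indicates, so that the parameters $T_1,\ldots,T_{2n_2}$ replace $T_1,\ldots,T_{n_2}$, and read off the answer from Shimura in that model.

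The main obstacle will be bookkeeping of normalisations rather than any conceptual difficulty: one must pin down the modular character $\delta_M$ and the half-density shift in the Satake transform, track the conjugation by $\xi_{n_2}$ and the choice of prime element $\varpi_v$ with $\overline{\varpi_v}=\varpi_v$, and verify in each of the three cases that the powers of $q$ and the arguments $\mathfrak{p}$, $\mathfrak{P}$, $\mathfrak{P}_1$, $\mathfrak{P}_2$ of $\overline{\chi_v}$ match Shimura's conventions after translation. Once these identifications are fixed, the theorem follows by direct substitution into \cite[Theorem~19.8]{shimura2000arithmeticity}.
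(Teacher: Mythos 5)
Your proposal matches the paper's own argument: the paper likewise computes $\eta$ explicitly on the double coset representatives $\varpi_{d_1,\ldots,d_{n_2}}$ via the factorization of $\xi_{n_2}\,\iota(s_{n_1,n_2}(\varpi),I_{n_2})$, sets up the Satake transform with the same torus, Borel, and split/non-split dichotomy, and then obtains the three displayed formulas by direct appeal to \cite[Theorem~19.8]{shimura2000arithmeticity}. The only difference is cosmetic — you flag the normalization bookkeeping more explicitly than the paper does — so the proposal is correct and follows essentially the same route.
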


Let $\lambda_{j}(f_v)$ be the eigenvalue of $f_v$ for the Hecke operator $T_j$.
Then, $T_jf_v^\natural=\lambda_{j}(f_v)f_v^\natural$. Therefore, we have the
specific formula for $S_v(f_v)$.
\begin{cor} We have
	\begin{align*}
		S_v(f_v) & =\left\{
		\begin{array}{ll}
			\dfrac{\prod_{i=1}^{2n_2}(1-(-1)^{i-1}q^{i-1-2s}\overline{\chi_v}(\mathfrak{p}))}
			{\prod_{i=1}^{n_2}(1-q^{2n_2-2s-2}\lambda_{j}(f_v)\,\overline{\chi_v}(\mathfrak{P}))
			(1-q^{2n_2-2s}\lambda_{j}(f_v)^{-1}\,\overline{\chi_v}(\mathfrak{P}))}
			 & (\text{inert; } \mathfrak{p}=\mathfrak{P}),                 \\[15pt]
			\dfrac{\prod_{i=0}^{n_2-1}(1-q^{2i-2s}\overline{\chi_v}(\mathfrak{p}))}
			{\prod_{i=1}^{n_2}(1-q^{n_2-s-1}\lambda_{j}(f_v)\,\overline{\chi_v}(\mathfrak{P}))(1-q^{n_2-s}\lambda_{j}(f_v)^{-1}\,\overline{\chi_v}(\mathfrak{P}))}
			 & (\text{ramified; } \mathfrak{p}=\mathfrak{P}^2),            \\[15pt]
			\displaystyle\prod_{i=1}^{2n_2}\dfrac{1-q^{i-1-2s}\,\overline{\chi_v}(\mathfrak{p})}
			{(1-q^{2n_2-s}\lambda_{j}(f_v)^{-1}\,\overline{\chi_{v}}(\mathfrak{P_1}))(1-q^{-1-s}\lambda_{j}(f_v)\,\overline{\chi_{v}}(\mathfrak{P_2}))}
			 & (\text{split; } \mathfrak{p}=\mathfrak{P}_1\mathfrak{P}_2),
		\end{array}
		\right.
		\\
		         & =L_v(s-n+1/2,f\otimes \overline{\chi},\mathrm{St})
		\cdot\left(\prod_{i=0}^{2n-1} L_{\kp}(2s-i,\overline{\chi}\cdot\chi_{K}^{i})\right)^{-1} \\
		         & =D_v(s,f;\overline{\chi}).
	\end{align*}
\end{cor}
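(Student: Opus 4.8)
The plan is to establish the two equalities in the corollary separately: first, that $S_v(f_v)$ equals the explicit rational function obtained by substituting the Hecke eigenvalues of $f_v^\natural$ for the formal variables $T_i$ in the Satake transform $S_N\eta$ of the preceding theorem; and second, that this rational function is precisely the local factor $D_v(s,f;\overline{\chi})$.

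For the first equality I would argue as follows. The function $\eta(g)=\overline{\epsilon_{n,\kappa,v}(\xi_{n_2}\iota(s_{n_1,n_2}(g),I_{n_2}),\overline{s};\frakn,\chi_v)}$ is bi-$K_{n_2,v}$-invariant and, by the absolute convergence for $\Re(s)>n$, lies in the completion of the spherical Hecke algebra $\mathcal{H}_\mathfrak{p}(G_{n_2,v},K_{n_2,v})$ on which the Satake isomorphism still operates. Since $f_v^\natural$ is a spherical Hecke eigenfunction it determines a character of this algebra, and under the Satake isomorphism that character is evaluation at the $W_T$-orbit of Satake parameters of $f_v^\natural$; by the normalization $T_i f_v^\natural=\lambda_i(f_v)f_v^\natural$ this orbit is represented by $(\lambda_1(f_v),\dots,\lambda_{n_2}(f_v))$. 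Consequently $S_v(f_v)=(S_N\eta)(\lambda_1(f_v),\dots,\lambda_{n_2}(f_v))$, and inserting $T_i\mapsto\lambda_i(f_v)$ into each of the three cases of the previous theorem yields the displayed rational expressions. In the split case one argues identically after the identification $G_{n_2,v}\cong\mathrm{GL}_{2n_2}(\kp_v)$.

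For the second equality I would match numerator and denominator against the definition of $D_v$, recalling that $f$ has degree $n_2$ so that $D$ is formed with the degree-$2n_2$ standard $L$-function and the product over $i=0,\dots,2n_2-1$. Since $\chi_K^2=1$, the character $\overline{\chi}\,\chi_K^i$ at $v$ equals $\overline{\chi}$ for even $i$ and $\overline{\chi}\,\chi_K$ for odd $i$; expanding the local Euler factors and using whether $\mathfrak{p}$ is inert, ramified, or split then exhibits the reciprocal of the Hecke $L$-factors as exactly the numerators above (in the ramified case the odd factors are trivial because $\chi_K$ is ramified at $v$, leaving $n_2$ factors, while in the inert and split cases all $2n_2$ factors survive). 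For the denominator I would invoke the defining Euler product of the standard $L$-function of $f\otimes\overline{\chi}$: after the shift $s\mapsto s-n_2+1/2$ its $2n_2$ Satake parameters are exactly $q^{2n_2-2s-2}\overline{\chi_v}(\mathfrak{P})\lambda_i(f_v)$ and $q^{2n_2-2s}\overline{\chi_v}(\mathfrak{P})\lambda_i(f_v)^{-1}$ for $i=1,\dots,n_2$ in the inert case, and the analogous quantities in the ramified and split cases, so that $L_v(s-n_2+1/2,f\otimes\overline{\chi},\mathrm{St})$ is the reciprocal of the displayed denominator. Combining the two matchings gives $S_v(f_v)=D_v(s,f;\overline{\chi})$.

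The main obstacle is the bookkeeping in the second equality: one must pin down the precise normalization of the standard $L$-function for $U(n_2,n_2)$ and verify that its local Satake parameters are exactly the shifted, $\overline{\chi}$-twisted quantities above, while keeping careful track of the relation between $\overline{\chi_v}(\mathfrak{p})$ and $\overline{\chi_v}(\mathfrak{P})$ and of $q=N_{\kp/\bbq}(\mathfrak{p})$ in each of the inert, ramified, and split cases. By contrast the first equality is a formal consequence of Satake's theorem together with the eigenfunction property, once convergence legitimizes working inside the completed Hecke algebra.
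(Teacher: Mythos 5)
Your proposal is correct and follows essentially the same route as the paper: the first equality is obtained by evaluating the Satake transform $S_N\eta$ from the preceding theorem at the Satake parameters of $f_v^\natural$ (i.e.\ substituting the eigenvalues $\lambda_i(f_v)$ for the variables $T_i$), and the second is the definitional unwinding of the local standard $L$-factor twisted by $\overline{\chi}$ and of the Hecke $L$-factors of $\overline{\chi}\cdot\chi_K^i$ in the inert, ramified, and split cases. The paper's own proof is just a terser version of this substitution argument, so there is nothing further to reconcile.
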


Therefore, we obtain the following proposition.
\begin{prop}\label{prop:good} For a finite place $v\in\bfh$ such that $v\nmid\frakn$, we have
	\[\int_{G_{n_2,v}}f_v(g_2)\,\overline{\epsilon_{n,\kappa,v}\left(\xi_{n_2}\,\iota(g_1,g_2),\overline{s}; \frakn, \chi\right)}\,dg_2
		=D_v(s,f;\overline{\chi})\overline{\epsilon(f^\dagger_{\chi,v})_{n_2,v}^{n_1}(g_1,s;\chi)}.\]
\end{prop}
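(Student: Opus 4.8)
The plan is to assemble the local ingredients already in place into the asserted closed form, so the argument is essentially one of bookkeeping. First I would fix the Iwasawa decomposition of $g_1=g_{1,v}$ relative to the parabolic $P_{n_1,n_2}$, writing $g_{1,v}=t_{n_1,n_2}(A_1)\,\mu_1\,s_{n_1,n_2}(h)\,k_1$ with $A_1\in\mathrm{GL}_{n_1-n_2}(K_v)$, $\mu_1\in U_{n_1,n_2,v}$, $h\in G_{n_2,v}$ and $k_1\in K_{n_1,v}$. Since $v\nmid\frakn$ we have $K_{n_1,v}(\frakn)=K_{n_1,v}$, so $k_1$ automatically lies in the level subgroup and only the first (non-vanishing) branch in the definition of $\epsilon(f^\natural)^{n_1}_{n_2,v}$ will be relevant at the end.

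Next I would invoke the reduction displayed just before the definition of $\eta$: the left-equivariance of $\epsilon_{n,\kappa,v}$ under $t_{n_1,n_2}(A_1)$ and $\mu_1$, together with right-invariance under $k_1$ and a change of variables in $g_2$, recasts the left-hand integral as
\[\abs{\det(\adj{A_1}A_1)}_v^{s}\,\overline{\chi_v}(\det A_1)\int_{G_{n_2,v}}f_v^\natural(hg^{-1})\,\eta(g)\,dg,\]
where $\eta(g)=\overline{\epsilon_{n,\kappa,v}(\xi_{n_2}\iota(s_{n_1,n_2}(g),I_{n_2}),\overline{s};\frakn,\chi_v)}$ is the bi-$K_{n_2,v}$-invariant kernel introduced above.

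I would then apply the spherical eigenfunction property. Because $\eta$ is left and right $K_{n_2,v}$-invariant and $f_v$, hence $f_v^\natural$, is a $\mathfrak{p}$-Hecke eigenfunction, the convolution reproduces $f_v^\natural$ up to a scalar, $\int_{G_{n_2,v}}f_v^\natural(hg^{-1})\eta(g)\,dg=S_v(f_v)\,f_v^\natural(h)$, and by the Corollary preceding this statement that scalar is $S_v(f_v)=D_v(s,f;\overline{\chi})$. Combining this with the previous display and reading off, from the very definition of $\epsilon(f^\natural)^{n_1}_{n_2,v}$, that $\abs{\det(\adj{A_1}A_1)}_v^{s}\,\overline{\chi_v}(\det A_1)\,f_v^\natural(h)=\epsilon(f^\natural)^{n_1}_{n_2,v}(g_1,s;\overline{\chi})$, gives the claimed identity.

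The genuinely substantive content has already been discharged above: the explicit Satake transform $S_N\eta$ in the Theorem and the resulting closed form $S_v(f_v)=D_v(s,f;\overline{\chi})$ in the Corollary. The only residual obstacle is therefore to justify the eigenfunction step cleanly, namely that $\eta$ — which for $\Re(s)>n$ is an absolutely convergent limit of elements of $\mathcal{H}_{n_2,\mathfrak{p}}$ rather than a single compactly supported Hecke function — still acts on the spherical eigenform $f_v^\natural$ through the substitution of its Hecke eigenvalues into $S_N\eta$, so that $S_v(f_v)$ is well defined and computed by the Corollary. Granting this convergence point, the assembly of the displays above is purely formal.
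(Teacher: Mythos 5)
Your proposal follows the paper's own argument essentially verbatim: the same Iwasawa reduction to the bi-$K_{n_2,v}$-invariant kernel $\eta$, the same spherical eigenfunction step $\int f_v^\natural(hg^{-1})\eta(g)\,dg=S_v(f_v)f_v^\natural(h)$, and the same identification $S_v(f_v)=D_v(s,f;\overline{\chi})$ via the Satake transform computed in the preceding theorem and corollary. The convergence caveat you flag (that $\eta$ is only a limit of Hecke-algebra elements, absolutely convergent for $\Re(s)>n$) is precisely the point the paper also passes over with the phrase ``it can be written as a limit of elements of the Hecke algebra,'' so your treatment is no less complete than the original.
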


\subsubsection{Bad Non-archimedean Factors}
Let $v\in\bfh$ be a finite place of $\kp$  such that $v\mid\frakn$.
For simplicity, we may consider only the case $n_1 = n_2$.
We have
\begin{align*}
	  & \int_{G_{n_2,v}}f_v(g_2)\,\overline{\epsilon_{n,\kappa,v}\left(\xi_{n_2}\,\iota(g_{1,v},g_2)\theta^{-1},\overline{s}; \frakn, \chi\right)}\,dg_2                                                                \\
	= & \overline{\chi_v}(\det(g_{1,v}))\int_{G_{n_2,v}}f_v^\natural(g_{1,v}{g}^{-1})\chi_v(\det(g))\overline{\epsilon_{n,\kappa,v}\left(\xi_{n_2}\,\iota(g,I_{n_2})\theta^{-1},\overline{s}; \frakn, \chi\right)}\,dg.
\end{align*}

As in \cite[p.462]{Garrett1992On}, we choose an explicit integral representation
for $\epsilon_{n,\kappa,v}$ in order to facilitate explicit computations.
Let $\phi_v$ be the characteristic function of
$\left\{\begin{pmatrix}u&v\end{pmatrix}\in M_{n,2n}(\inte{K_v})\mid \begin{pmatrix}u&v\end{pmatrix}\equiv \begin{pmatrix}0_n&I_n\end{pmatrix} \pmod{\frakn \mathcal{O}_{K_v}}\right\}$
on $M_{n,2n}(K_v)$.
Then we have
\[\epsilon_{n,\kappa,v}\left(g,s; \frakn, \chi\right)
	=\mathrm{vol}(\calk_{n,v}(\frakn))^{-1}\int_{\mathrm{GL}_n(K_v)}\norm{\det(\adj{t}t)}_v^{s}\chi_v(\det t)\phi_v(t\begin{pmatrix}0_n&I_n\end{pmatrix}g)dt,\]
where $dt$ is a Haar measure on $\mathrm{GL}_n(K_v)$ such that $\mathrm{GL}_n(\inte{K_v})$ has volume 1,
and $\mathrm{vol}(\calk_{n,v}(\frakn))$ is the measure of $\calk_{n,v}(\frakn)$ with respect to the Haar measure on $G_{n,v}$.

From the definition of $\phi_v$,
$\phi_v(t\begin{pmatrix}0_n&I_n\end{pmatrix}\xi_{n_2}\,\iota(g,I_{2n_2})\theta^{-1})\neq0$ if and only if
\[t\begin{pmatrix}0_n&I_n\end{pmatrix}\xi_{n_2}\,\iota(g,I_{2n_2})
	\equiv \begin{pmatrix}
		0_{n_2} & I_{n_2} & I_n & 0_{n_2} \\I_{n_2}&0_{n_2}&0_{n_2}&I_{n_2}
	\end{pmatrix} \pmod{\frakn \mathcal{O}_{K_v}}.\]
By a simple calculation, we have $\phi_v(t\begin{pmatrix}0_n&I_n\end{pmatrix}\xi_{n_2}\,\iota(g,I_{2n_2})\theta^{-1})\neq0$
if and only if $t\in \mathrm{GL}_n(\inte{K_v})$, $t\equiv I_n  \pmod{\frakn \mathcal{O}_{K_v}} $,
and $g\equiv I_n \pmod{\frakn \mathcal{O}_{K_v}} $.
Thus, we have
\[\overline{\chi_v}(\det(g))\epsilon_{n,\kappa,v}\left(\xi_{n_2}\,\iota(g,I_{2n_2})\theta^{-1},s; \frakn, \chi\right)=\left\{\begin{array}{ll}
		1 \quad & (g\in \calk_{n,v}(\frakn)),     \\
		0 \quad & (g\not\in \calk_{n,v}(\frakn)), \\
	\end{array}\right.\]
since the conductor of $\chi$ divides $\frakn$.
Therefore, since $f^\natural$ is right $\calk_{n,v}(\frakn)$-invariant, we obtain the following proposition.
\begin{prop}\label{prop:bad}
	We assume $n_1=n_2$. For a finite place $v$ of $\kp$ such that $v\mid\frakn$, we have
	\[\int_{G_{n_2,v}}f_v(g_2)\,\overline{\epsilon_{n,\kappa,v}\left(\xi_{n_2}\,\iota(g_{1,v},g_2),\overline{s}; \frakn, \chi\right)}\,dg_2=[\calk_{n,v}:\calk_{n,v}(\frakn)]\overline{f^\dagger_{\chi,v}(g_{1,v})}.\]
\end{prop}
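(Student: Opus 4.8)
The plan is to evaluate this local integral by inserting Garrett's explicit integral representation for $\epsilon_{n,\kappa,v}$ and then reading off the support of the resulting integrand. Since we are in the case $n_1=n_2$, I would first use the change of variables already performed above to rewrite the left-hand side as
\[
\int_{G_{n_2,v}}f_v^\natural(g_{1,v}g^{-1})\,\overline{\epsilon_{n,\kappa,v}\!\left(\xi_{n_2}\,\iota(g,I_{2n_2})\theta^{-1},\overline{s};\frakn,\chi\right)}\,dg,
\]
so the whole problem collapses to understanding the single function $g\mapsto\epsilon_{n,\kappa,v}(\xi_{n_2}\,\iota(g,I_{2n_2})\theta^{-1},s;\frakn,\chi)$ on $G_{n_2,v}$. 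Following \cite{Garrett1992On} I would express this $\epsilon_{n,\kappa,v}$ as $\mathrm{vol}(K_{n,v}(\frakn))^{-1}\int_{\mathrm{GL}_n(K_v)}\norm{\det(\adj{t}t)}_v^{s}\chi_v(\det t)\,\phi_v\!\left(t\begin{pmatrix}0_n&I_n\end{pmatrix}\xi_{n_2}\,\iota(g,I_{2n_2})\theta^{-1}\right)dt$, where $\phi_v$ is the characteristic function of the level-$\frakn$ congruence lattice introduced above.

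The key step, which is also the main obstacle, is the explicit matrix computation that locates the support of the integrand. I would multiply out $t\begin{pmatrix}0_n&I_n\end{pmatrix}\xi_{n_2}\,\iota(g,I_{2n_2})\theta^{-1}$ and impose the defining congruence of $\phi_v$ modulo $\frakn\inte{K_v}$. The assertion to be verified is that this congruence forces, simultaneously, $t\in\mathrm{GL}_n(\inte{K_v})$ with $t\equiv I_n$ and $g$ to lie in the congruence subgroup $K_{n,v}(\frakn)$; this is precisely where the specific shape of $\xi_{n_2}$ and of $\theta_v=\xi_{n_2}$ enters, and keeping track of the block congruences is the delicate bookkeeping hidden behind ``a simple calculation''. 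Once the support is pinned down, for any admissible $t$ one has $\norm{\det(\adj{t}t)}_v=1$, and since the conductor of $\chi$ divides $\frakn$ one also gets $\chi_v(\det t)=1$; hence the $t$-integral is constant on the support and $\epsilon_{n,\kappa,v}(\xi_{n_2}\,\iota(g,I_{2n_2})\theta^{-1},s;\frakn,\chi)$ reduces to the characteristic function of $K_{n,v}(\frakn)$ on $G_{n_2,v}$.

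Finally I would substitute this characteristic function back into the outer integral, reducing it to $\int_{K_{n,v}(\frakn)}f_v^\natural(g_{1,v}g^{-1})\,dg$. Because $f^\natural$ is right $K_{n,v}(\frakn)$-invariant, the integrand is the constant $f_v^\natural(g_{1,v})$, so the integral is a volume times $f_v^\natural(g_{1,v})$; combining the factor $\mathrm{vol}(K_{n,v}(\frakn))^{-1}$ from Garrett's representation with the normalization $\mathrm{vol}(K_{n,v})=1$ then produces exactly the index $[K_{n,v}:K_{n,v}(\frakn)]$ and yields the stated formula. Beyond the support computation, the only point requiring care is to keep the two volume normalizations consistent, so that the index itself --- and not its reciprocal --- appears in the final constant.
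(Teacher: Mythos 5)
Your proposal follows essentially the same route as the paper's own proof: rewrite the integrand via the substitution $g_2\mapsto g_{1,v}g^{-1}$, insert Garrett's integral representation of $\epsilon_{n,\kappa,v}$ against the characteristic function $\phi_v$ of the level-$\frakn$ congruence lattice, compute that the support forces $t\equiv I_n$ and $g\in K_{n,v}(\frakn)$ modulo $\frakn\inte{K_v}$, and finish using the right $K_{n,v}(\frakn)$-invariance of $f^\natural$. The two points you flag as delicate --- the block congruence bookkeeping behind the support computation and the consistency of the two volume normalizations that produces the index rather than its reciprocal --- are exactly the points the paper passes over quickly, so your outline is a faithful match.
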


\subsubsection{Archimedean Factors}
We put $\epsilon_{n,\kappa,\infty}(g,s; \frakn,\chi)=\prod_{\va}\epsilon_{n,\kappa,v}(g_v,s; \frakn,\chi)$
for $g=(g_v)_{\va}\in G_{n,\infty}$.
From Lemma~\ref{lem:difpoly}, there is the family $Q(T,s)=(Q_v(T,s))_{\va}$ of polynomials such that
\[\dkl\epsilon_{n,\kappa,\infty}(g,s; \frakn,\chi)=\epsilon_{n,\kappa,\infty}(g,s)Q(\Delta(g),s).\]
By Corollary~\ref{cor:acttophi} and Corollary~\ref{cor:diff}, we can easily obtain the
following lemma.
\begin{lem}
	We have
	\[Q\left(
		\begin{pmatrix} A_1 & 0 \\ 0 &  A_2\end{pmatrix}
		T
		\begin{pmatrix}^t\!B_1 & 0\\ 0 & ^t\!B_2  \end{pmatrix},s
		\right)
		=\left(\rho_{n_1,\bfl,\bfk}(A_1,B_1)\otimes\rho_{n_2,\bfk,\bfl}(A_2,B_2)\right)Q(T,s)\]
	for $(A_i,B_i)\in \calk_{(n_i)}^\bbc=\prod_{\va}(\mathrm{GL}_{n_i}(\bbc)\times\mathrm{GL}_{n_i}(\bbc))$.
\end{lem}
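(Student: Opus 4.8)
The plan is to identify the family $Q(\,\cdot\,,s)$ appearing here with the polynomial $\psi_{\kappa,s}(P)$ attached to $\dkl=P(\partial_Z)$ in Theorem~\ref{thm:PtoPhi}, and then transport the equivariance of $P$ supplied by Condition~(A) through the operator $\psi_{\kappa,s}$ by means of Corollary~\ref{cor:acttophi}. Since the archimedean factor is $\epsilon_{n,\kappa,\infty}(g,s)=\varrho(g;\kappa,s)$, the polynomial $Q$ produced by Lemma~\ref{lem:difpoly} is precisely the one in the second assertion of Theorem~\ref{thm:PtoPhi}, namely $Q(T,s)=\psi_{\kappa,s}(P)(T;\kappa,s)$. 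This is an identity of polynomials in $T$ for each admissible $s$, and the equivariance we are after, being rational in $s$, then extends to all $s$ by analytic continuation.

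First I would set $\mathcal{A}=\mathrm{diag}(A_1,A_2)$ and $\mathcal{B}=\mathrm{diag}(B_1,B_2)$, both of size $n=n_1+n_2$, so that the left-hand side is $Q(\mathcal{A}\,T\,{}^t\!\mathcal{B},s)=\psi_{\kappa,s}(P)(\mathcal{A}\,T\,{}^t\!\mathcal{B})$. Applying Corollary~\ref{cor:acttophi} with the pair $({}^t\!\mathcal{A},{}^t\!\mathcal{B})$, for which ${}^t({}^t\!\mathcal{A})\,T\,({}^t\!\mathcal{B})=\mathcal{A}\,T\,{}^t\!\mathcal{B}$, rewrites this as $\psi_{\kappa,s}(P')(T)$ with $P'(T)=P(\mathcal{A}\,T\,{}^t\!\mathcal{B})$. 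The only delicate point here is the transpose bookkeeping that matches $\mathcal{A}\,T\,{}^t\!\mathcal{B}$ to the substitution ${}^t\!A\,T\,B$ in that corollary.

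Next I would invoke Condition~(A). Because $\dkl$ satisfies Condition~(A) for $\det^\kappa$ and $\rho'_{n_1}\otimes\rho_{n_2}=\det^\kappa\rho_{n_1,(\bfl,\bfk)}\otimes\det^\kappa\rho_{n_2,(\bfk,\bfl)}$, Corollary~\ref{cor:diff}(2) with $d=2$, $(\bfk_1,\bfl_1)=(\bfl,\bfk)$ and $(\bfk_2,\bfl_2)=(\bfk,\bfl)$ yields
\[P'(T)=P\!\left(\begin{pmatrix}A_1&0\\0&A_2\end{pmatrix}T\begin{pmatrix}{}^t\!B_1&0\\0&{}^t\!B_2\end{pmatrix}\right)=\bigl(\rho_{n_1,\bfl,\bfk}(A_1,B_1)\otimes\rho_{n_2,\bfk,\bfl}(A_2,B_2)\bigr)P(T)\]
as an identity of $V$-valued polynomials, the representation matrix acting on the target space $V$. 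Since $\psi_{\kappa,s}$ is $\bbc$-linear in $P$ and differentiates only in the matrix variable $W$, leaving the $V$-valued coefficients untouched, it commutes with this fixed linear endomorphism of $V$; hence $\psi_{\kappa,s}(P')(T)=\bigl(\rho_{n_1,\bfl,\bfk}(A_1,B_1)\otimes\rho_{n_2,\bfk,\bfl}(A_2,B_2)\bigr)Q(T,s)$, which is the claimed formula.

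I expect the main obstacle to be purely organizational: keeping the two tensor factors in the right order and paired with the correct weights, $(\bfl,\bfk)$ for the $n_1$-block and $(\bfk,\bfl)$ for the $n_2$-block, while passing between the $P$-side (Corollary~\ref{cor:diff}) and the $Q$-side, and tracking the transposes introduced by Corollary~\ref{cor:acttophi}. Once the identification $Q=\psi_{\kappa,s}(P)$ is granted, the conclusion is a one-line consequence of the $\bbc$-linearity of $\psi_{\kappa,s}$ and its compatibility with substitution of block-diagonal matrices.
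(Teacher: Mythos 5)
Your proposal is correct and follows exactly the route the paper takes: the paper's entire justification is ``By Corollary~\ref{cor:acttophi} and Corollary~\ref{cor:diff}, we can easily obtain the following lemma,'' and your argument simply fills in those two citations — identifying $Q(\cdot,s)$ with $\psi_{\kappa,s}(P)$ via Lemma~\ref{lem:difpoly} and Theorem~\ref{thm:PtoPhi}, transporting the substitution through Corollary~\ref{cor:acttophi}, and applying the equivariance of $P$ from Corollary~\ref{cor:diff} together with the linearity of $\psi_{\kappa,s}$ in the $V$-valued coefficients. The weight bookkeeping, with $(\bfl,\bfk)$ attached to the $n_1$-block and $(\bfk,\bfl)$ to the $n_2$-block, matches the paper's convention $\rho'_{n_1}=\det^\kappa\rho_{n_1,(\bfl,\bfk)}$, $\rho_{n_2}=\det^\kappa\rho_{n_2,(\bfk,\bfl)}$.
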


We put $|\bfk|=\sum_{v,i}k_{v,i}$, $|\bfl|=\sum_{v,i}l_{v,i}$, $|\kappa|=n_2\sum_{\va}\kappa_v$ and $\abs{\rho_{n_2}}=|\kappa|+|\bfk|+|\bfl|$
for the fixed dominant integral weights such that $\bfk_v=(k_{v,1},k_{v,2},\ldots)$, $\bfl_v=(l_{v,1},l_{v,2},\ldots)$ for each $\va$.

In the following, the subscript of $\infty$ is often omitted, and the notations of section 3 will be used.

\begin{prop}\label{prop:arch}
	We have
	\begin{equation*}
		\int_{G_{n_2,\infty}}\left<f(g_2),(\dkl\epsilon_{n,\kappa,\infty})\left(\xi_{n_2}\,\iota(g_1,g_2),\overline{s}; \frakn, \chi\right)\right>dg_2
		=c(s,\rho_{n_2})\cdot\overline{\epsilon(f^\dagger)_{n_2}^{n_1}(g_1,s; \chi)}.
	\end{equation*}
	Here, for any $w\in V_{\rho_{n_2}}$, the function $c(s,\rho_{n_2})$ satisfies
	\begin{align*}
		c(s,\rho_{n_2})w=2^{-mn_2(2s+2n_2)+2|\kappa|+|\bfk|+|\bfl|} & \int_{\mathfrak{S}_{n_2}}\left<\rho_{n_2}(I_{n_2}-\adj{S}S,I_{n_2}-{}^{t}\!{S}\overline{S})w,
		Q(R,\overline{s})\right>                                                                                                                          \\
		                                                  & \qquad\cdot\det(I_{n_2}-\adj{S}S)^{\kappa/2-s-2n_2}dS,
	\end{align*}
	where $\mathfrak{S}_{n_2}=\left\{S\in
		(M_{n_2}(\bbc))^\bfa\mid I_{n_2}-\adj{S}S>0\right\}$ and
	\begin{align*}
		R= &
		\begin{pmatrix}
			\begin{pmatrix}0&0\\0&\sqrt{-1}S\end{pmatrix}
			 & \begin{pmatrix}0\\I_{n_2}\end{pmatrix}   \\
			\begin{pmatrix}0&I_{n_2}\end{pmatrix}
			 & -\sqrt{-1}\adj{S}(I_{n_2}-S\adj{S})^{-1}
		\end{pmatrix}.
	\end{align*}
\end{prop}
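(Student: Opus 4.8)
The plan is to insert the differential-operator identity stated just above the proposition and then follow Kozima's method \cite{Kozima2021Pullback}, reducing the $G_{n_2,\infty}$-integral to an integral over the bounded realisation of the symmetric domain for $G_{n_2}$. First I would rewrite the integrand as
\[(\dkl\epsilon_{n,\kappa,\infty})(\xi_{n_2}\iota(g_1,g_2),\overline{s})=\epsilon_{n,\kappa,\infty}(\xi_{n_2}\iota(g_1,g_2),\overline{s})\,Q(\Delta(\xi_{n_2}\iota(g_1,g_2)),\overline{s}).\]
Both $f(g_2)$ and this kernel are right $K_{n_2,\infty}$-equivariant of the type matching $\rho_{n_2}$, so the pairing is right $K_{n_2,\infty}$-invariant; writing $g_2=g_{Z_2}k_2$ by the Iwasawa decomposition with $Z_2=X_2+\sqrt{-1}Y_2$ and using $f(g_{Z_2})=\rho_{n_2}(M(g_{Z_2}))^{-1}\hat f(Z_2)$, the integral descends to one over $\hus{n_2}^\bfa$ against the invariant measure $(\det Y_2)^{-2n_2}dZ_2$.

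Second, I would show that the resulting integral, regarded as a function of $g_1$, carries exactly the left $P_{n_1,n_2}$-equivariance that characterises the archimedean Klingen kernel $\epsilon(f^\natural)_{n_2}^{n_1}(g_1,s;\overline{\chi})$. This rests on Fact~\ref{fact:decomp}(2): the stabiliser of $P_{n,0}\xi_{n_2}\iota(g_1,g_2)$ shows that translating $g_1$ along the Levi-and-unipotent directions $t_{n_1,n_2}(A)\mu$ multiplies the kernel by the automorphy factors prescribed in the definition of $\epsilon(f^\natural)_{n_2}^{n_1}$, while the $s_{n_1,n_2}(g)$-direction is matched, after the substitution $g_2\mapsto s_{n_2,n_2}(g^{\natural})g_2$ and the use of $G_{n_2}$-invariance of the measure together with the automorphy of $f$, to the appearance of $f^{\natural}$ (exactly as in the good non-archimedean computation). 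Hence the integral equals $\epsilon(f^\natural)_{n_2}^{n_1}(g_1,s;\overline{\chi})$ multiplied by a factor independent of $g_1$.

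Third, I would compute that $g_1$-independent factor by evaluating at a base point whose $h_{n_2}$-component is trivial and carrying out the block computation of $\delta$ and $\Delta$ at $\xi_{n_2}\iota(g_1,g_{Z_2})$ from the explicit forms in Section~5.1. Passing to the Cayley coordinate $S$ on the bounded domain $\mathfrak{S}_{n_2}=\{S:I_{n_2}-\adj{S}S>0\}$, the argument of $Q$ collapses to the matrix $R$ of the statement; the weight factor $|\delta|^{\kappa-2s}$ together with $(\det Y_2)^{-2n_2}dZ_2$ convert, through the Cayley relations between $Y_2$ and $I_{n_2}-\adj{S}S$, into $\det(I_{n_2}-\adj{S}S)^{\kappa/2-s-2n_2}dS$, while the automorphy factor of $f$ becomes $\rho_{n_2}(I_{n_2}-\adj{S}S,I_{n_2}-{}^tS\overline{S})$ acting on a test vector $w\in V_{\rho_{n_2}}$. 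Exploiting the equivariance of $Q$ from the lemma preceding the proposition and collecting the Jacobian and Cayley constants into the power of $2$ then produces the displayed formula defining $c(s,\rho_{n_2})$ through its pairing against $w$, whose factorisation over $\va$ yields the local scalars used in the main theorem.

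The hard part will be the block computation in the third step: verifying that $\Delta(\xi_{n_2}\iota(g_1,g_{Z_2}))$ reduces precisely to $R$ after the Cayley transform, and that the weight and measure factors recombine with the stated exponent $\kappa/2-s-2n_2$ and the correct power of $2$. One must also track which automorphy factors are holomorphic and which are conjugate, since the second argument $I_{n_2}-{}^tS\overline{S}$ of $\rho_{n_2}$ is the complex conjugate of the first; this mirrors the splitting of $\rho_{n_2}=\det^\kappa\rho_{n_2,(\bfk,\bfl)}$ into its holomorphic and anti-holomorphic parts and must be matched against the $\rho_{n_1,\bfl,\bfk}\otimes\rho_{n_2,\bfk,\bfl}$-equivariance of $Q$.
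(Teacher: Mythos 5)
Your overall framework (insert $\dkl\epsilon=\epsilon\cdot Q(\Delta)$, descend to $\hus{n_2}^\bfa$, pass to the bounded domain $\mathfrak{S}_{n_2}$ by a Cayley-type substitution, and use the equivariance of $Q$ to strip off the automorphy factors) matches the paper's computation. However, there is a genuine gap at the heart of the argument: nothing in your proposal explains why the $g_2$-integral reproduces the \emph{value} of the cusp form at a single point. Your second step only establishes that the integral, as a function of $g_1$, has the left $P_{n_1,n_2}$-equivariance of a Klingen kernel built from \emph{some} function $\Phi(h)$ on $G_{n_2,\infty}$; identifying $\Phi$ as a scalar multiple of $f^\natural$ is exactly the nontrivial point. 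At the finite places this identification comes from the Hecke eigenform property, but at the archimedean place the analogous input is the holomorphy of $f$, and it must be used explicitly. In the paper this is done by setting $\hat F(S)=\rho_{n_2}(Z'+W,{}^t\!Z'+{}^t\!W)F(W)$ in the $S$-coordinate, Taylor-expanding $\hat F(tS)=\sum_\nu \hat F_\nu(S)t^\nu$, and substituting $S\mapsto Se^{\sqrt{-1}\psi}$ to show that every term with $\nu>0$ integrates to zero over $\mathfrak{S}_{n_2}$ (the kernel depends on $S$ only through $\adj{S}S$-type expressions, so the angular integral kills all positive-degree terms). Only then does the integral collapse to $\hat F_0=\hat F(0)$, i.e.\ to $F(-\adj{Z'})$, which is what becomes $f^\natural(h)$ via $w_0^{-1}h^\natural w_0^{-1}\left<\bfi_{n_2}\right>=-\adj{Z'}$, and which is also what turns the remaining integral into the constant $c(s,\rho_{n_2})$ paired against a fixed vector $w$. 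Without this mean-value/reproducing-kernel step your third step cannot produce the stated formula for $c(s,\rho_{n_2})$: the integrand would still contain $F(W)$ at varying $W$, not a constant test vector $w$.

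A secondary, smaller point: your second step asserts that the $s_{n_1,n_2}(g)$-direction is handled ``exactly as in the good non-archimedean computation'' by the substitution $g_2\mapsto$ (translate by $g^\natural$). That substitution reduces the problem to a convolution $\int f^\natural(hg^{-1})\eta(g)\,dg$, but at $\va$ there is no spherical Hecke algebra argument available to evaluate it; you are again thrown back on the holomorphy argument above. You should also note the paper's intermediate use of the left translation by $w_0=\left(\begin{smallmatrix}0&-I_{n_2}\\ I_{n_2}&0\end{smallmatrix}\right)\in\Gamma_K^{(n_2)}$ to convert $\det(I_{n_2}-WZ')$ into $\det(Z'+W)$ before the Cayley substitution; this is what makes the map $\mathcal{L}_{Z'}$ and the resulting matrix $R$ take the clean form in the statement.
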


\begin{proof}
	We set $Z=\begin{pmatrix}Z_{11} & Z_{21}  \\ Z_{22} & Z'\end{pmatrix}
		=g_1\left<\bfi_{n_1}\right>\ (Z'\in \hus{n_2}^\bfa)$,
	$W=g_2\left<\bfi_{n_2}\right>\in \hus{n_2}^\bfa$.
	We put $Y_1=\Im(Z)$, $Y'_1=\Im(Z')$ and $Y_2=\Im(W)$.

	Since
	\begin{align*}
		\Delta(\xi_{n_2}\,\iota(g_1,g_2))
		= & \begin{pmatrix}\lambda(g_1)&0\\0&\lambda(g_2)\end{pmatrix}^{-1}
		\begin{pmatrix}
			I_{n_1}-\widetilde{I_{n_2}}W{}^{t}\!\widetilde{I_{n_2}}Z & 0                                                        \\
			0                                                        & I_{n_2}-{}^{t}\!\widetilde{I_{n_2}}Z\widetilde{I_{n_2}}W
		\end{pmatrix}^{-1}                                                                                                    \\
		  & \quad \cdot\begin{pmatrix}
			               \sqrt{-1}(I_{n_1}-\widetilde{I_{n_2}}W^{t}\!\widetilde{I_{n_2}}\adj{Z})Y_1^{-1} & 2\widetilde{I_{n_2}}                                                              \\
			               2 {}^{t}\!\widetilde{I_{n_2}}                                                   & \sqrt{-1}(I_{n_2}-{}^{t}\!\widetilde{I_{n_2}}Z\widetilde{I_{n_2}}\adj{W})Y_2^{-1}
		               \end{pmatrix} \\
		  & \quad\cdot\begin{pmatrix}{}^{t}\!\mu(g_1)&0\\0&{}^{t}\!\mu(g_2)\end{pmatrix}^{-1},
	\end{align*}
	we have
	\begin{align*}
		\quad & \dkl\epsilon_{n,\kappa,\infty}(\xi_{n_2}\,\iota(g_1,g_2),s; \frakn, \chi)                              \\
		      & \quad= \abs{\delta(g_1)\delta(g_2)\det(I_{n_1}-\widetilde{I_{n_2}}g_2\left<\bfi_{n_2}\right>
		{}^{t}\!\widetilde{I_{n_2}}g_1\left<\bfi_{n_1}\right>)}^{\kappa-2s}                                            \\
		      & \quad\qquad\cdot\left(\delta(g_1)\delta(g_2)\det(I_{n_1}-\widetilde{I_{n_2}}g_2\left<\bfi_{n_2}\right>
		{}^{t}\!\widetilde{I_{n_2}}g_1\left<\bfi_{n_1}\right>)\right)^{-\kappa}
		\cdot Q\left(\Delta(\xi_{n_2}\,\iota(g_1,g_2)),s\right)                                                        \\
		      & \quad = \abs{\delta(g_1)\delta(g_2)\det(I_{n_2}-WZ')}^{\kappa-2s}
		\cdot \det(I_{n_2}-WZ')^{-\kappa}                                                                              \\
		      & \quad\qquad\cdot\rho'_{n_1}(M(g_1))^{-1}\otimes\rho_{n_2}(M(g_2))^{-1}Q(R_1,s),
	\end{align*}
	where
	\footnotesize
	\begin{align*}
		R_1= &
		\begin{pmatrix}
			\sqrt{-1}\begin{pmatrix}I_{n_1-n_2}&0\\-WZ_{21}&I_{n_2}-WZ'\end{pmatrix}^{-1}
			\begin{pmatrix}I_{n_1-n_2}&0\\-W\adj{Z_{12}}&I_{n_2}-W\adj{Z'}\end{pmatrix}Y_1^{-1}
			 & \begin{pmatrix}0\\2(I_{n_2}-WZ')^{-1}\end{pmatrix}     \\
			\begin{pmatrix}0&2(I_{n_2}-Z'W)^{-1}\end{pmatrix}
			 & \sqrt{-1}(I_{n_2}-Z'W)^{-1}(I_{n_2}-Z'\adj{W})Y_2^{-1}
		\end{pmatrix}.
	\end{align*}
	\normalsize
	By left translation with $w_0=\begin{pmatrix}0       & -I_{n_2} \\
               I_{n_2} & 0        \\
		\end{pmatrix}$,
	we have
	\begin{align}
		 & \int_{G_{(n_2)}}\left<f(g_2),(\dkl\epsilon_{n,\kappa,\infty})\left(\xi_{n_2}\,\iota(g_1,g_2),\overline{s}\right)\right>dg_2 \label{eq:int} \\
		 & \quad =  \overline{\rho'_{n_1}}(M(g_1))^{-1}\int_{G_{(n_2)}}\abs{\delta(g_1)\delta(g_2)\det(I_{n_2}-WZ')}^{\kappa-2s}
		\det(I_{n_2}-\overline{W}\overline{Z'})^{-\kappa}  \notag                                                                                     \\
		 & \qquad\cdot\left<f(g_2),\rho_{n_2}(M(g_2))^{-1}\: Q(R_1,\overline{s})\right>dg_2  \notag                                                   \\
		 & \quad =  \overline{\rho'_{n_1}}(M(g_1))^{-1}\int_{G_{(n_2)}}\abs{\delta(g_1)\delta(g_2)\det(Z'+W)}^{\kappa-2s}
		\det(\overline{Z'}+\overline{W})^{-\kappa}\notag                                                                                              \\
		 & \qquad\cdot\left<f(w_0g_2),\rho_{n_2}(M(g_2))^{-1}\: Q(R_2,\overline{s})\right>dg_2 \notag                                                 \\
		 & \quad  = \overline{\rho'_{n_1}}(M(g_1))^{-1}\int_{\hus{n_2}^\bfa}\abs{\delta(g_1)\det(Y_2)^{-1/2}\det(Z'+W)}^{\kappa-2s}
		\det(\overline{Z'}+\overline{W})^{-\kappa}\notag                                                                                              \\
		 & \qquad\cdot\left<\rho_{n_2}(Y_2^{1/2}W^{-1},\,^{t}Y_2^{1/2}\,{}^tW^{-1})F(-W^{-1}),\rho_{n_2}(Y_2^{1/2},\,^{t}Y_2^{1/2})
		Q(R_2,\overline{s})\right>\frac{dW}{\det(Y_2)^{2n_2}}
		\notag                                                                                                                                        \\
		 & \quad = \overline{\rho'_{n_1}}(M(g_1))^{-1}\int_{\hus{n_2}^\bfa}\abs{\delta(g_1)\det(Y_2)^{-1/2}\det(Z'+W)}^{\kappa-2s}
		\det(\overline{Z'}+\overline{W})^{-\kappa}
		\notag                                                                                                                                        \\
		 & \qquad\cdot\left<\rho_{n_2}(Y_2W^{-1},\,^{t}Y_2\,{}^tW^{-1})F(-W^{-1}), Q(R_2,\overline{s})\right>\frac{dW}{\det(Y_2)^{2n_2}},  \notag
	\end{align}
	where
	\footnotesize
	\begin{align*}
		R_2= &
		\begin{pmatrix}
			\sqrt{-1}\begin{pmatrix}I_{n_1-n_2}&0\\-(Z'+W)^{-1}(Z_{21}-\adj{Z_{12}})&(Z'+W)^{-1}(W+\adj{Z'})\end{pmatrix}Y_1^{-1}
			 & \begin{pmatrix}0\\2(Z'+W)^{-1}\end{pmatrix} \\
			\begin{pmatrix}0&2(Z'+W)^{-1}\end{pmatrix}
			 & \sqrt{-1}(Z'+W)^{-1}(Z'+\adj{W})Y_2^{-1}
		\end{pmatrix}
	\end{align*}
	\normalsize
	and $F(W)=\rho_{n_2}(M(g_2))f(g_2)\in M_{\rho_{n_2}}(\Gamma_K^{(n_2)})$ for $W=g_2\left<\bfi_{n_2}\right>\in\hus{n_2}^\bfa$.

	By the Cholesky decomposition, a positive definite Hermitian matrix can be
	written as the product of a lower triangular matrix and its conjugate transpose.
	In our setting, this implies that there exists  a matrix $F_0=\begin{pmatrix}F_1 & F_2\\ 0 &F_3\end{pmatrix} \in (\mathrm{GL}_n(\bbc))^\bfa$
	such that $Y_1^{-1}=\adj{F_0}\,F_0$ and ${Y'_1}^{-1}=\adj{F_3}\,F_3$.
	We set
	\[S=\mathcal{L}_{Z'}(W)=F_3(\adj{Z'}+W)(Z'+W)^{-1}F_3^{-1}.\]
	Then, the map $\mathcal{L}_{Z'_1}: \hus{n_2}^\bfa\rightarrow\left\{S\in
		(M_{n_2}(\bbc))^\bfa\mid I_{n_2}-\adj{S}S>0\right\}=:\mathfrak{S}_{n_2}$ is biholomorphic. We
	note
	\begin{align*}
		dW  & =2^{-2n_2^2m}\det(Y_2)^{2n_2}\abs{\det((I_{n_2}-\adj{S}S))}^{-2n_2}dS, \\
		Y_2 & =2^{-2}(\,\adj{Z'}+\adj{W})\,\adj{F_3}(I_{n_2}-\adj{S}S)F_3(Z'+W)      \\
		    & =2^{-2}(Z'+W)\,\adj{F_3}(I_{n_2}-S\adj{S})F_3(\,\adj{Z'}+\adj{W}),
	\end{align*}
	where $dS$ is defined in the same way as $dW$.
	We put
	\[\hat{F}(S):=\rho_{n_2}\left((Z'+W)W^{-1}, ({}^t{Z'}+ {}^t{W}){}^t{W}^{-1}\right)F(-W^{-1}).\]
	Then the integral \eqref{eq:int} is equal to
	\begin{align}
		 & 2^{|\kappa|-mn_2(2s+2n_2)-2\abs{\rho_{n_2}}} \abs{\delta(g_1)\det({Y'_1})^{1/2}}^{\kappa-2s}\overline{\rho'_{n_1}}(M(g_1))^{-1}\notag \\
		 & \quad \cdot\int_{\mathfrak{S}_{n_2}} \left<\hat{F}(S),
		\rho_{n_2}(\,\adj{F_3}(I_{n_2}-\adj{S}S)F_3, \,^t\!F_3(I_{n_2}-\overline{S}{}^{t}\!{S})\overline{F_3})Q(R_3,\overline{s})\right>		\notag \\
		 & \quad \qquad\cdot\det(I_{n_2}-\adj{S}S)^{\kappa/2-s-2n_2}dS, \label{eq:int2}
	\end{align}
	where
	\footnotesize
	\begin{align*}
		R_3= &
		\begin{pmatrix}
			\sqrt{-1}\begin{pmatrix}I_{n_1-n_2}&0\\-(Z'+W)^{-1}(Z_{21}-\adj{Z_{12}})&(Z'+W)^{-1}(W+\adj{Z'})\end{pmatrix}Y_1^{-1}
			 & \begin{pmatrix}0\\2I_{n_2}\end{pmatrix} \\
			\begin{pmatrix}0&2I_{n_2}\end{pmatrix}
			 & \sqrt{-1}(Z'+\adj{W})Y_2^{-1}(Z'+W)
		\end{pmatrix} \\
		=    &
		\begin{pmatrix}
			\begin{pmatrix}0&0\\0&\sqrt{-1}\adj{F_3}SF_3\end{pmatrix}
			+\sqrt{-1}\begin{pmatrix}\adj{F_1}F_1&\adj{F_1}F_2\\\adj{F_2}F_1&\adj{F_2}F_2\end{pmatrix}
			 & \begin{pmatrix}0\\2I_{n_2}\end{pmatrix}                            \\
			\begin{pmatrix}0&2I_{n_2}\end{pmatrix}
			 & 4\sqrt{-1} F_3^{-1} \adj{S} (I_{n_2}-S\adj{S})^{-1} \adj{F_3}^{-1}
		\end{pmatrix}.
	\end{align*}
	\normalsize

	For a complex variable $t$ with $\abs{t} \leq 1$, we have the Taylor expansion
	\[\hat{F}(tS)=\sum_{\nu=0}^\infty \hat{F}_\nu(S)t^\nu.\]
	Then we have
	\[\hat{F}_\nu(tS)=\hat{F}_\nu(S)t^\nu\]
	and
	\[\quad \hat{F}(S)=\sum_{\nu=0}^\infty \hat{F}_\nu(S).\]
	By substituting $Se^{\sqrt{-1}\psi}$ with some real number $\psi$ for $S$,
	we know that the integral
	\begin{align*}
		\int_{\mathfrak{S}_{n_2}} & \left<\hat{F}_\nu(S), \rho_{n_2}(\,\adj{F_3}(I_{n_2}-\adj{S}S)F_3, \,^t\!F_3(I_{n_2}-^{t}\!{S}\overline{S})\overline{F_3})Q(R_3,\overline{s})\right> \\
		                          & \qquad\cdot\det(I_{n_2}-\adj{S}S)^{\kappa/2-s-2n_2}dS
	\end{align*}
	vanishes unless $\nu=0$ (See, for example, \cite{Kozima2021Pullback}) and the integral \eqref{eq:int2} is equal to
	\begin{align*}
		  & 2^{|\kappa|-mn_2(2s+2n_2)} \abs{\delta(g_1)\det({Y'_1})^{1/2}}^{\kappa-2s}\overline{\rho'_{n_1}}(M(g_1))^{-1}
		\\
		  & \quad\cdot\int_{\mathfrak{S}_{n_2}} \left<\hat{F}_0(S), \rho_{n_2}(\,\adj{F_3}(I_{n_2}-\adj{S}S)F_3,
		\,^t\!F_3(I_{n_2}-{}^{t}\!{S}\overline{S})\overline{F_3})Q(R_4,\overline{s})\right>                                                                                                                                \\
		  & \qquad \qquad\cdot\det(I_{n_2}-\adj{S}S)^{\kappa/2-s-2n_2}dS                                                                                                                                                   \\
		= & 2^{|\kappa|-mn_2(2s+2n_2)}\abs{\delta(g_1)\det({Y'_1})^{1/2}}^{\kappa-2s}\overline{\rho'_{n_1}}(M(g_1))^{-1}
		\\
		  & \quad\cdot\int_{\mathfrak{S}_{n_2}}
		\left<\rho_{n_2}(\adj{Z'}-Z',\overline{Z'}-^t\!Z')\rho_{n_2}(\adj{Z'}^{-1},\overline{Z'}^{-1})F(\adj{Z'}^{-1})\right.,                                                                                             \\
		  & \qquad \qquad\left.
		\rho_{n_2}(\,\adj{F_3}(I_{n_2}-\adj{S}S)F_3,
		\,^t\!F_3(I_{n_2}-{}^{t}\!{S}\overline{S})\overline{F_3})Q(R_4,\overline{s})\right>                                              \cdot\det(I_{n_2}-\adj{S}S)^{\kappa/2-s-2n_2}dS                                   \\
		= & 2^{|\kappa|-mn_2(2s+2n_2)}(-2\sqrt{-1})^{\abs{\rho_{n_2}}}\abs{\delta(g_1)\det({Y'_1})^{1/2}}^{\kappa-2s}\overline{\rho'_{n_1}}(M(g_1))^{-1}
		\\
		  & \quad\cdot\int_{\mathfrak{S}_{n_2}}\left<\rho_{n_2}((I_{n_2}-\adj{S}S)\adj{F_3}^{-1},(I_{n_2}-{}^{t}\!{S}\overline{S}) {}^{t}\!{F_3}^{-1})\rho_{n_2}(\adj{Z'}^{-1},\overline{Z'}^{-1})F(\adj{Z'}^{-1}),\right. \\
		  & \qquad \qquad \left.\rho_{n_2}(F_3,\overline{F_3})Q(R_4,\overline{s})\right>\cdot\det(I_{n_2}-\adj{S}S)^{\kappa/2-s-2n_2}dS                                                                                    \\
		= & (-\sqrt{-1})^{\abs{\rho_{n_2}}} c(s,\rho_{n_2})\abs{\delta(g_1)\det({Y'_1})^{1/2}}^{\kappa-2s}\overline{\rho'_{n_1}}(M(g_1))^{-1}
		\rho_{n_2}(\adj{Z'}^{-1},\overline{Z'}^{-1})F(\adj{Z'}^{-1}), \tag{5.3}\label{eq:int3}
	\end{align*}
	where
	\begin{align*}
		R_4= &
		\begin{pmatrix}
			\begin{pmatrix}0&0\\0&\sqrt{-1}\adj{F_3}SF_3\end{pmatrix}
			 & \begin{pmatrix}0\\I_{n_2}\end{pmatrix}                            \\
			\begin{pmatrix}0&I_{n_2}\end{pmatrix}
			 & \sqrt{-1} F_3^{-1} \adj{S} (I_{n_2}-S\adj{S})^{-1} \adj{F_3}^{-1}
		\end{pmatrix}.
	\end{align*}

	If we write $g_1=t_{n_1,n_2}(A_{n_2})\, \mu\, s_{n_1,n_2}(h)\, k$
	with $A_{n_2}\in \prod_{\va}\mathrm{GL}_{n_1-n_2}(\bbc)$, $\mu\in U_{n_1,n_2,\infty}$, $h\in G_{n_1,n_2,\infty}$ and $k\in \calk_{n_1,n_2,\infty}$ by the Iwasawa decomposition,3
	we have $h\left<\bfi_{n_2}\right>=Z'$ and $h^\natural\left<\bfi_{n_2}\right>=\adj{Z'}^{-1}$.
	Therefore, \eqref{eq:int3} is equal to
	\begin{align*}
		  & (-\sqrt{-1})^{\abs{\rho_{n_2}}}
		c(s,\rho_{n_2})\abs{\delta(g_1)\delta(h)^{-1}}^{\kappa-2s}\overline{\rho'_{n_1}}(M(g_1))^{-1}
		\rho_{n_2}(M(w_0h^\natural))f(h^\natural)                                           \\
		= & 
		c(s,\rho_{n_2})\abs{\delta(g_1)\delta(h)^{-1}}^{\kappa-2s}\overline{\rho'_{n_1}}(M(g_1))^{-1}
		\overline{\rho'_{n_2}}(M(h))\det(h)^{\kappa}
		f^\natural(h)                                                                       \\
		= & c(s,\rho_{n_2})\cdot\overline{\epsilon(f^\dagger_{\chi,v})_{n_2}^{n_1}(g_1,s)}.
	\end{align*}
	This concludes the proof.
\end{proof}

\vskip.5\baselineskip

Combining the above local calculations of Proposition~\ref{prop:good}, Proposition~\ref{prop:bad} and Proposition~\ref{prop:arch}
and noting that
\[\left(f,(\dkl E_{n,\kappa})(\iota(g_1,*),\overline{s};\frakn, \chi)\right)
	=\sum_{\gamma_1\in P_{n_1,n_2}^\infty\backslash G_{n_1}^\infty}\left(f,W_r(\gamma_1g_1,*, s; \chi)\right),
\]
we obtain the main theorem.
\begin{thm}\label{thm:pull}
	Let $S$ be the set of finite places $v$ dividing $\frakn$,
	and take $s \in \bbc$ such that $\Re(s)>n$.
	\begin{enumerate}
		\item If $n_1=n_2$, for any Hecke eigenform $f\in \mathcal{A}_{0,n_2}(\rho_{n_2},\frakn)$, we have
		      \begin{align*}
			      \left(f,(\dkl E^\theta_{n,\kappa})(\iota(g_1,*),\overline{s}; \chi)\right)
			      =  c(s,\rho_{n_2}) \cdot \prod_{v\mid\frakn}[\calk_{n,v}:\calk_{n,v}(\frakn)]
			      \cdot D_S(s,f;\overline{\chi}) \cdot \overline{f^\dagger(g_1)}.
		      \end{align*}
		\item If $\frakn=\inte{\kp}$, for any Hecke eigenform $f\in \mathcal{A}_{0,n_2}(\rho_{n_2})$, we have
		      \begin{align*}
			      \left(f,(\dkl E_{n,\kappa})(\iota(g_1,*),\overline{s};\frakn, \chi)\right)
			      =  c(s,\rho_{n_2}) \cdot D(s,f;\overline{\chi}) \cdot \overline{[f^\dagger]_{n_2}^{n_1}(g_1,s; \chi)}.
		      \end{align*}
	\end{enumerate}
	Here a $\bbc$-valued function $c(s,\rho_{n_2})$ is defined in Proposition~\ref{prop:arch}, which does not depend on $n_1$.
\end{thm}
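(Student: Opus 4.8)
The plan is to assemble the three local computations of Propositions~\ref{prop:good}, \ref{prop:bad}, and \ref{prop:arch} into a single global identity, after reducing the Petersson pairing to a factorizable adelic integral. First I would start from the double-coset expansion of $\dkl E^\theta_{n,\kappa}$ established above, which expresses the pairing $(f,\dkl E^\theta_{n,\kappa}(\iota(g_1,*),\overline{s}))$ as a sum over $\gamma_1\in P_{n_1,n_2}\backslash G_{n_1}$ of pairings of $f$ against $\sum_{r}\sum_{\gamma_2}W^\theta_r(\gamma_1 g_1,\gamma_2\,*,\overline{s})$. By the vanishing statement for $r<n_2$, which follows from the cuspidality of $f$ (integrating along the unipotent radical $U_{n_2,r}$ kills the contribution), only the term $r=n_2$ survives; unfolding the $\gamma_2$-summation then collapses the quotient integral $\int_{G_{n_2}\backslash G_{n_2,\adele}}$ into the full adelic integral $\int_{G_{n_2,\adele}}\langle f(g_2),(\dkl\epsilon_{n,\kappa})(\xi_{n_2}\iota(\gamma_1 g_1,g_2)\theta^{-1},\overline{s})\rangle\,dg_2$, exactly as recorded just before the statement.

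Next I would factorize this adelic integral. Since $f=\prod_v f_v$ and $\epsilon_{n,\kappa}=\prod_v\epsilon_{n,\kappa,v}$ are Euler products, the operator $\dkl$ acts only at the archimedean places, and the Haar measure is the product $dg=\prod_v dg_v$, the integral splits as $\prod_v I_v$ over all places. I would then substitute the three evaluations from Section~5.3: at a good finite place $v\nmid\frakn$, Proposition~\ref{prop:good} gives $D_v(s,f;\overline{\chi})\,\epsilon(f^\natural)_{n_2,v}^{n_1}(g_{1,v},s;\overline{\chi})$; at a bad finite place $v\mid\frakn$, Proposition~\ref{prop:bad} gives $[K_{n,v}:K_{n,v}(\frakn)]\,f^\natural_v(g_{1,v})$; and at the archimedean places, Proposition~\ref{prop:arch} gives $c(s,\rho_{n_2})\,\epsilon(f^\natural)_{n_2}^{n_1}(g_1,s;\overline{\chi})$, with $c(s,\rho_{n_2})$ independent of $n_1$.

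Finally I would reassemble and separate the two cases. In case (1), where $n_1=n_2$, the parabolic $P_{n_1,n_2}$ equals $G_{n_1}$, so the outer sum over $\gamma_1\in P_{n_1,n_2}\backslash G_{n_1}$ is trivial and each local Klingen factor degenerates to the value $f^\natural_v(g_{1,v})$; the good-place factors multiply to $D_S(s,f;\overline{\chi})$, the bad places contribute $\prod_{v\mid\frakn}[K_{n,v}:K_{n,v}(\frakn)]$, and the archimedean place contributes $c(s,\rho_{n_2})$, yielding the first formula with $f^\natural(g_1)$. In case (2), where $\frakn=\inte{\kp}$, the set $S$ is empty so every finite place is good and $\prod_{v\in\bfh}D_v=D(s,f;\overline{\chi})$; restoring the sum over $\gamma_1$ and recombining the local Klingen factors $\epsilon(f^\natural)_{n_2,v}^{n_1}$ reconstitutes the global Klingen--Eisenstein series $[f^\natural]_{n_2}^{n_1}(g_1,s;\overline{\chi})$, giving the second formula.

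The genuine technical heart sits in Proposition~\ref{prop:arch}: it requires writing $\dkl\epsilon_{n,\kappa,\infty}$ through the polynomial $Q$ of Lemma~\ref{lem:difpoly}, transporting the integral to the bounded domain $\mathfrak{S}_{n_2}$ via the transform $\mathcal{L}_{Z'}$, and invoking pluriharmonicity together with the $K'$-equivariance of Corollaries~\ref{cor:acttophi} and \ref{cor:diff} to annihilate every Taylor coefficient but the lowest one. For the assembly itself the main obstacle is the bookkeeping: verifying that the local $S_v(f_v)$ genuinely multiply to $D$ (respectively $D_S$) matching the definition via the standard $L$-function, and that the retained $\gamma_1$-sum rebuilds precisely the normalized Klingen series $[f^\natural]_{n_2}^{n_1}$ rather than an unnormalized variant, so that $c(s,\rho_{n_2})$ can be isolated as an $n_1$-independent archimedean constant.
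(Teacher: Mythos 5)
Your proposal is correct and follows essentially the same route as the paper: unfold via the double-coset decomposition, kill the $r<n_2$ terms by cuspidality, factor the resulting adelic integral into an Euler product, and substitute Propositions~\ref{prop:good}, \ref{prop:bad}, and \ref{prop:arch} before reassembling the $\gamma_1$-sum into $f^\natural$ (case $n_1=n_2$) or the Klingen--Eisenstein series $[f^\natural]_{n_2}^{n_1}$ (case $\frakn=\inte{\kp}$). Your identification of Proposition~\ref{prop:arch} as the technical heart also matches where the paper concentrates its effort.
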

\bibliography{Pullback}
\bibliographystyle{abbrv}

\end{document}